
\documentclass[final]{siamltex}

% definitions used by included articles, reproduced here for
% educational benefit, and to minimize alterations needed to be made
% in developing this sample file.

\usepackage[english]{babel}
\usepackage{latexsym}
\usepackage{amssymb}

\usepackage{amsmath}
\usepackage{relsize}
\usepackage{tikz}
\usepackage{enumerate}
\usepackage{graphicx}
\usepackage{url}
\urlstyle{same}

\hyphenation{par-ti-cular}

\makeatletter
\newcommand\xleftrightarrow[2][]{\ext@arrow 0099{\longleftrightarrowfill@}{#1}{#2}}
\def\longleftrightarrowfill@{\arrowfill@\leftarrow\relbar\rightarrow}
\makeatother

\newcommand{\nc}{\newcommand}
\nc{\nt}{\newtheorem}
\nt{thm}{Theorem}[section]
\nt{cor}[thm]{Corollary}
\nt{prop}[thm]{Proposition}
\nt{obs}[thm]{Observation}
\nt{claim}[thm]{Claim}
\nt{lem}[thm]{Lemma}
\nt{defn}[thm]{Definition}
\nt{exa}[thm]{Example}
\nt{rem}[thm]{Remark}
\nt{ass}[thm]{Assumption}
\nt{alg}[thm]{Algorithm}
\nt{con}[thm]{Conjecture}
\nc{\ip}[2]{\mbox{$\langle #1,#2 \rangle$}}
\nc{\pf}{\noindent{\bf Proof.\ \ }}
\nc{\finpf}{\hfill{$\Box$}\linespace}
\nc{\linespace}{\vspace{\baselineskip} \noindent}
\nc{\R}{{\bf R}}
\nc{\E}{{\bf E}}
\nc{\cl}{\mbox{\rm cl}\,}
\nc{\cls}{ \mbox{{\scriptsize {\rm cl}}}\,}
\nc{\conv}{\mbox{\rm conv}}
\nc{\rb}{\mbox{\rm rb}\,}
\nc{\ri}{\mbox{\rm ri}\,}
\nc{\inter}{\mbox{\rm int}\,}
\nc{\kernel}{\mbox{\rm ker}\,}
\nc{\bd}{\mbox{\rm bd}\,}
%\nc{\proj}{\mbox{\rm Proj}\,}
\nc{\spann}{\mbox{\rm span}\,}
\nc{\rint}{\mbox{\rm rint}\,}
\nc{\epi}{\mbox{\rm epi}\,}
\nc{\gph}{\mbox{\rm gph}\,}
\nc{\rge}{\mbox{\rm rge}\,}
\nc{\rgel}{\mbox{\rm {\scriptsize rge}}\,}
\nc{\sepi}{\mbox{\rm {\scriptsize epi}}\,}
\nc{\sbd}{\mbox{\rm {\scriptsize bd}}\,}
\nc{\dom}{\mbox{\rm dom}\,}
\nc{\lin}{\mbox{\rm lin}\,}
\nc{\detr}{\mbox{\rm det}\,}
\nc{\para}{\mbox{\rm par}\,}
\nc{\crit}{\mbox{\rm crit}\,}
\nc{\cone}{\mbox{\rm cone}\,}
\nc{\Diag}{\mbox{\rm Diag}\,}
\nc{\fix}{\mbox{\rm Fix}}
\nc{\aff}{\mbox{\rm aff}\,}
%\nc{\rank}{\mbox{\rm rank}\,}
\nc{\tr}{\mbox{\rm tr}\,}

%\underset{x}{\operatorname{argmin}}
%\nc{\argmin}{\mbox{\rm argmin}\,}

\newcommand{\lf}{\operatornamewithlimits{liminf}}

\newenvironment{myequation}{\begin{equation}}{\end{equation}}

\newenvironment{myeqnarray*}{\begin{eqnarray*}}{\end{eqnarray*}}
\nc{\bmye}{\begin{myequation}} \nc{\emye}{\end{myequation}}

\def\tto{\;{\lower 1pt \hbox{$\rightarrow$}}\kern -12pt
           \hbox{\raise 2.8pt \hbox{$\rightarrow$}}\;}

% some definitions of bold math italics to make typing easier.
% They are used in the corollary.

\title{Orthogonal Invariance and Identifiability}

% The thanks line in the title should be filled in if there is
% any support acknowledgement for the overall work to be included
% This \thanks is also used for the received by date info, but
% authors are not expected to provide this.

\author{
A. Daniilidis\thanks{
    Departament de Matem\`{a}tiques, C1/364,
    Universitat Aut\`{o}noma de Barcelona,
    E-08193 Bellaterra, Spain (on leave) and
    DIM-CMM, Universidad de Chile, Blanco Encalada~2120,
    piso~5, Santiago, Chile;
    {\tt
    http://mat.uab.es/{\raise.17ex\hbox{$\scriptstyle\sim$}}arisd}.
    Research supported by the grant MTM2011-29064-C01 (Spain) and FONDECYT Regular No 1130176 (Chile).
    }
    \and
D. Drusvyatskiy\thanks{%
    Department of Operations Research and Information Engineering,
    Cornell University,
    Ithaca, New York, USA;
    {\tt http://people.orie.cornell.edu/dd379/}.
    Work of Dmitriy Drusvyatskiy on this paper has been partially supported by the NDSEG grant from the Department of Defense.
    }%
    \and
  A.S. Lewis\thanks{%
  School of Operations Research and Information Engineering,
  Cornell University,
  Ithaca, New York, USA;
  {\tt http://people.orie.cornell.edu/aslewis/}.
  Research supported in part by National Science Foundation Grant DMS-0806057
  and by the US-Israel Binational Scientific Foundation Grant 2008261.
}}

\begin{document}

\maketitle

\begin{abstract}
Orthogonally invariant functions of symmetric matrices often inherit properties from their diagonal restrictions:  von Neumann's theorem on matrix norms is an early example.  We discuss the example of ``identifiability'', a common property of nonsmooth functions associated with the existence of a smooth manifold of approximate critical points.  Identifiability (or its synonym, ``partial smoothness'') is the key idea underlying active set methods in optimization.  Polyhedral functions, in particular, are always partly smooth, and hence so are many standard examples from eigenvalue optimization.
\end{abstract}

\begin{keywords}
Eigenvalues, symmetric matrix, partial smoothness, identifiable set, polyhedra, duality
\end{keywords}

\begin{AMS}
15A18, 53B25, 15A23, 05A05
\end{AMS}

\pagestyle{myheadings}
\thispagestyle{plain}
\markboth{A. Daniilidis, D. Drusvyatskiy, A.S. Lewis}{Orthogonal Invariance and Identifiability}

\section{Introduction}
Nonsmoothness is inherently present throughout even classical mathematics and engineering - the spectrum of a symmetric matrix variable is a good example.  The nonsmooth behavior is not, however, typically pathological, but on the contrary is highly structured. The theory of {\em identifiability} (or its synonym, {\em partial smoothness})
\cite{Lewis-active,Hare,Wright,ident} models this idea by positing existence of smooth manifolds capturing
the full ``activity'' of the problem. Such manifolds, when they exist, are simply composed of approximate critical points of the minimized function. In the classical case of nonlinear programming, this theory reduces
to the active set philosophy. Illustrating the ubiquity of the notion, the authors of \cite{gen} prove that
identifiable manifolds exist generically for convex semi-algebraic optimization problems.

Identifiable manifolds are particularly prevalent in the
context of eigenvalue optimization. One of our goals is to shed new
light on this phenomenon. To this end, we will consider so-called
{\em spectral functions}. These are functions $F$, defined on the
space of symmetric matrices ${\bf S}^n$, that depend on matrices
only through their eigenvalues, that is, functions that are
invariant under the action of the orthogonal group by conjugation.
Spectral functions can always be written as the composition
$F=f\circ \lambda$ where $f$ is a permutation-invariant function on
$\R^n$ and $\lambda$ is the mapping assigning to each matrix
$X\in{\bf S}$ the vector of its eigenvalues
$(\lambda_1(X),\ldots,\lambda_n(X))$ in non-increasing order,
see~\cite[Section~5.2]{cov_lift}. Notable examples of functions
fitting in this category are $X\mapsto\lambda_1(X)$ and $X\mapsto
\sum^{n}_{i=1} |\lambda_i(X)|$. Though the spectral mapping
$\lambda$ is very badly behaved, as far as say differentiability is
concerned, the symmetry of $f$ makes up for the fact, allowing
powerful analytic results to become available.

In particular, the {\em Transfer Principle} asserts that
$F$ inherits many geometric (more generally variational analytic) properties of $f$, or equivalently, $F$ inherits many properties of its restriction to diagonal matrices. For example, when $f$ is a permutation-invariant norm, then $F$ is an orthogonally invariant norm on the space of symmetric matrices --- a special case of von Neumann's theorem on unitarily invariant matrix norms \cite{von_Neumann}.  The collection of
properties known to satisfy this principle is impressive: convexity
\cite{lag, cov_orig}, prox-regularity \cite{spec_prox},
Clarke-regularity \cite{send,lag},
smoothness~\cite{lag,der,high_order,man, diff_1,diff_2}, algebraicity~\cite{man},
and stratifiability \cite[Theorem 4.8]{mather}. In this work, we add
identifiability (and partial smoothness) to the list
(Theorems~\ref{prop:lift_id} and \ref{thm:lift_pman}). In
particular, many common spectral functions (like the two
examples above) can be written in the composite form $f\circ
\lambda$, where $f$ is a permutation-invariant convex {\em polyhedral} function.
As a direct corollary of our results, we conclude that such
functions always admit partly smooth structure! Furthermore, a
``polyhedral-like'' duality theory of partly smooth manifolds
becomes available.

One of our intermediary theorems is of particular interest. We will
give an elementary argument showing that a permutation-invariant set $M$ is a
${\bf C}^{\infty}$ manifold if and only if the spectral set
$\lambda^{-1}(M)$ is a ${\bf C}^{\infty}$ manifold
(Theorem~\ref{thm:lift_man}). The converse implication of our result
is apparently new. On the other hand, the authors of \cite{man}
proved the forward implication even for ${\bf C}^k$ manifolds (for
$k=2,\ldots,\infty$). This being said, their proof is rather long and dense, whereas the proof of our result is very
accessible. The key idea of our approach is to consider the metric projection onto $M$.

The outline of the manuscript is as follows. In
Section~\ref{sec:lift_man} we establish some basic notation and give
an elementary proof of the spectral lifting property for ${\bf
C}^{\infty}$ manifolds. In Section~\ref{sec:lift_id} we prove the
lifting property for identifiable sets and partly smooth manifolds,
while in Section~\ref{sec:dual} we explore duality theory of partly
smooth manifolds. Section~\ref{sec:nonsymm} illustrates how our
results have natural analogues in the world of nonsymmetric
matrices.

\section{Spectral functions and lifts of manifolds}\label{sec:lift_man}
\subsection{Notation}
Throughout, the symbol ${\bf E}$ will denote a Euclidean space (by
which we mean a finite-dimensional real inner-product space). The
functions that we will be considering will take their values in the
extended real line $\overline{\R}:=\R\cup\{-\infty,\infty\}$. We say
that an extended-real-valued function is {\em proper} if it is never
$\{-\infty\}$ and is not always $\{+\infty\}$. For a set $Q\subset
{\bf E}$, the {\em indicator function} $\delta_Q\colon {\bf
E}\to\overline{\R}$ is a function that takes the value $0$ on $Q$
and $+\infty$ outside of $Q$. An open ball of radius $\epsilon$
around a point $\bar{x}$ will be denoted by $B_{\epsilon}(\bar{x})$,
while the open unit ball will be denoted by ${\bf B}$. Two
particular realizations of ${\bf E}$ will be important for us,
namely $\R^n$ and the space ${\bf S}^n$ of $n\times n$-symmetric
matrices.

Throughout, we will fix an orthogonal basis of $\R^n$,
along with an inner product $\langle \cdot,\cdot\rangle$. The
corresponding norm will be written as $\|\cdot\|$. The group of
permutations of coordinates of $\R^n$ will be denoted by $\Sigma^n$,
while an application of a permutation $\sigma\in\Sigma^n$ to a point
$x\in\R^n$ will simply be written as $\sigma x$. We denote by
$\R^n_{\geq}$ the set of all points $x\in\R^n$ with $x_1\geq
x_2\geq\ldots \geq x_n$. A function $f\colon\R^n\to\overline{\R}$ is
said to be {\em symmetric} if we have $f(x)=f(\sigma x)$ for every
$x\in\R^n$ and every $\sigma\in\Sigma^n$.

The vector space of real $n\times n$ symmetric matrices ${\bf S}^n$
will always be endowed with the trace inner product $\langle
X,Y\rangle=\tr(XY)$, while the associated norm (Frobenius norm) will
be denoted by $\|\cdot\|_{F}$. The group of orthogonal $n\times n$
matrices will be denoted by ${\bf O}^n$. Note that the group of
permutations $\Sigma^n$ naturally embeds in ${\bf O}^n$. The action
of ${\bf O}^n$ by conjugation on ${\bf S}^n$ will be written as
$U.X:=U^{T}XU$, for matrices $U\in {\bf O}^n$ and $X\in {\bf S}^n$.
A function $h\colon{\bf S}^n\to\overline{\R}$ is said to be {\em
spectral} if we have $h(X)=h(U.X)$ for every $X\in{\bf S}^n$ and
every $U\in{\bf O}^n$.

\subsection{Spectral functions and the transfer principle}
We can now consider the spectral mapping $\lambda\colon{\bf
S}^n\to\R^n$ which simply maps symmetric matrices to the vector of
its eigenvalues in nonincreasing order. Then a function on ${\bf
S}^n$ is spectral if and only if it can be written as a composition
$f\circ\lambda$, for some symmetric function
$f\colon\R^n\to\overline{\R}$. (See for example
\cite[Proposition~4]{lag}.) As was mentioned in the introduction,
the {\em Transfer Principle} asserts that a number of
variational-analytic properties hold for the spectral function
$f\circ\lambda$ if and only if they hold for $f$. We will encounter
a number of such properties in the current work. Evidently,
analogous results hold even when $f$ is only {\em locally symmetric}
(to be defined below). The proofs follow by a reduction to the
symmetric case by simple symmetrization arguments, and hence we will
omit details in the current paper.

For each point $x\in\R^n$, we consider the {\em stabilizer}
$$\fix(x):=\{\sigma\in\Sigma^n : \sigma x=x\}.$$
\begin{defn}[Local symmetry]
{\rm
A function $f\colon\R^n\to\overline{\R}$ is {\em locally symmetric}
at a point $\bar{x}\in\R^n$ if we have $f(x)=f(\sigma x)$ for all
points $x$ near $\bar{x}$ and all permutations $\sigma\in \fix(\bar{x})$.
}
\end{defn}

\smallskip
A set $Q\subset\R^n$ is symmetric (respectively locally symmetric) if the indicator
function $\delta_Q$ is symmetric (respectively locally symmetric). The following shows
that smoothness satisfies the Transfer Principle \cite{diff_2,diff_1}.

\smallskip
\begin{thm}[Lifts of smoothness]\label{thm:sm_lift}
Consider a function $f\colon\R^n\to\overline{\R}$ and a matrix
$\overline{X}\in{\bf S}^n$. Suppose that $f$ is locally symmetric
around $\bar{x}:=\lambda(\overline{X})$. Then $f$ is ${\bf
C}^p$-smooth $(p=1,\ldots,\infty )$ around $\bar{x}$ if and only if
the spectral function $f\circ\lambda$ is ${\bf C}^p$-smooth around
$\overline{X}$.
\end{thm}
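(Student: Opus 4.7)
The plan is to prove the two implications separately; the backward one is elementary, while the forward one combines analytic perturbation theory with a classical result on smooth invariants of finite groups.

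For the backward implication, assume $F := f\circ\lambda$ is $\mathbf{C}^p$ near $\overline X$. I would define the diagonal restriction $g(x) := F(\Diag x)$, which is $\mathbf{C}^p$ near $\bar x$ because $\Diag$ is linear. The function $g$ is in fact globally $\Sigma^n$-invariant: conjugating $\Diag x$ by the permutation matrix attached to $\sigma$ yields $\Diag(\sigma x)$, and $F$ is orthogonally invariant. To conclude I would show $f = g$ on a neighborhood of $\bar x$. For $x \in \R^n_{\geq}$ close to $\bar x$ this is immediate, since $\lambda(\Diag x) = x$. For an arbitrary $x$ near $\bar x$, strict block-separation of the coordinates of $\bar x$ forces the decreasing rearrangement of $x$ to lie in the $\fix(\bar x)$-orbit of $x$; picking $\sigma \in \fix(\bar x)$ with $\sigma x \in \R^n_{\geq}$, local symmetry of $f$ and global symmetry of $g$ give
$$f(x) = f(\sigma x) = g(\sigma x) = g(x).$$
So $f$ agrees with the $\mathbf{C}^p$ function $g$ on a neighborhood of $\bar x$.

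For the forward implication, assume $f$ is $\mathbf{C}^p$ and locally symmetric around $\bar x$. Using orthogonal invariance of $F$ I may assume $\overline X = \Diag(\bar x)$. Write the distinct eigenvalues of $\overline X$ as $\mu_1 > \cdots > \mu_k$ with multiplicities $m_1,\ldots,m_k$, so that $\fix(\bar x) = \Sigma^{m_1}\times\cdots\times\Sigma^{m_k}$. Analytic perturbation theory, via Riesz projectors obtained by contour integration of the resolvent, produces a $\mathbf{C}^\infty$-smooth family of symmetric blocks $X \mapsto (X_1(X),\ldots,X_k(X))$ near $\overline X$, where the spectrum of $X_i(X)$ consists exactly of the eigenvalues of $X$ clustered near $\mu_i$. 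Consequently every elementary symmetric polynomial of this eigenvalue cluster coincides with a polynomial in the entries of $X_i(X)$, hence is $\mathbf{C}^\infty$ in $X$. On the other hand, local $\fix(\bar x)$-invariance of $f$ combined with the classical theorem of G.\ Schwarz on smooth invariants of finite groups lets us write
$$f(x) = h\bigl(e_j^{(i)}(x) : 1 \le i \le k,\; 1 \le j \le m_i\bigr)$$
locally around $\bar x$ for some $\mathbf{C}^p$ function $h$, where $e_j^{(i)}$ is the $j$-th elementary symmetric polynomial in the $i$-th block of coordinates. Substituting $x = \lambda(X)$ exhibits $F$ locally as a $\mathbf{C}^p$ function of $\mathbf{C}^\infty$ quantities, and hence as $\mathbf{C}^p$.

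The main obstacle I anticipate is the use of Schwarz's theorem at finite smoothness: the classical statement is phrased for $\mathbf{C}^\infty$ and requires a dedicated refinement (or a local $\mathbf{C}^\infty$-approximation argument) to transport to $\mathbf{C}^p$. An alternative route, closer in spirit to \cite{diff_1,diff_2}, would bypass invariant theory altogether and construct the smooth expression of $F$ directly from the smooth dependence of the blocks $X_i(X)$ combined with a $\fix(\bar x)$-adapted Taylor expansion of $f$ at $\bar x$.
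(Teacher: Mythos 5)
The paper does not give a proof of this theorem --- it cites it as known from the Lewis--Sendov line of work \cite{diff_1,diff_2} --- so there is no internal argument to compare against; I will therefore assess the proposal on its own terms. Your backward implication is correct and essentially complete: setting $g(x):=F(\Diag x)$, observing that $g$ is globally $\Sigma^n$-invariant and $\mathbf{C}^p$, and then reducing an arbitrary $x$ near $\bar x$ to its decreasing rearrangement via a permutation in $\fix(\bar x)$ to conclude $f=g$ locally is a clean and self-contained argument.

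For the forward implication, your route through Riesz projectors and Schwarz's theorem is sound at $p=\infty$, but you have correctly flagged a genuine gap at finite $p$, and it is not a removable technicality: there is no loss-free finite-smoothness analogue of Schwarz's theorem. Already for $\Sigma^2$ acting on $\R^2$, Glaeser's theorem only guarantees that a $\mathbf{C}^{2m}$ symmetric function factors through $(x+y,xy)$ as a $\mathbf{C}^m$ function, and the corresponding results of Barban\c{c}on and Rumberger for $\Sigma^n$ likewise lose orders of differentiability. As written, your argument therefore proves the statement only for $p=\infty$ --- which happens to be the one case the rest of the paper actually invokes, e.g.\ in Theorem~\ref{thm:lift_man}, but it falls short of the claim for $p=1,\ldots,\infty$. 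The references the paper cites obtain the full range of $p$ by an entirely different mechanism, namely explicit divided-difference formulas for the derivatives of $f\circ\lambda$, which transport the exact differentiability class without passing through invariant theory; your own suggested "alternative route" is essentially pointing in that direction. A minor technical improvement to your $\mathbf{C}^\infty$ argument: rather than trivializing the spectral sub-bundle to produce literal $m_i\times m_i$ blocks $X_i(X)$, it is simpler to note that the power sums $\tr\big(P_i(X)X^r\big)$ of the $i$-th eigenvalue cluster are manifestly smooth in $X$ and then recover the block-wise elementary symmetric functions by Newton's identities.
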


\smallskip
The {\em distance} of a point $\bar{x}$ to a set $Q\subset {\bf E}$ is simply
$$d_Q(x):=\inf\,\{\|x-y\|: y\in Q\},$$
whereas the {\em metric projection} of $x$ onto $Q$ is defined by
$$P_Q(x):=\{y\in Q: d_Q(x)=y\}.$$
It will be important for us to relate properties of a set $Q$ with those of the
metric projection $P_Q$. To this end, the following notion arises naturally \cite{prox_reg,loc_diff}.
\smallskip
\begin{defn}[Prox-regularity]\label{defn:prox_set}
{\rm A set $Q\subset {\bf E}$ is {\em prox-regular} at a point
$\bar{x}\in Q$ if $Q$ is locally closed around $\bar{x}$ and the
projection mapping $P_Q$ is single-valued around~$\bar{x}$. }
\end{defn}

\smallskip
In particular, all ${\bf C}^2$-manifolds and all closed convex sets
are prox-regular around any of their points. See for example
{\cite[Example 13.30, Proposition 13.32]{VA}}. Additionally, it is
well-known that if $M\subset \E$ is a ${\bf C}^p$ smooth manifold
(for $p\geq 2$) around a point $\bar{x}\in M$, then there exists a
neighborhood $U$ of $\bar{x}$ on which the projection $P_M$ is
single-valued and ${\bf C}^{p-1}$-smooth. Prox-regularity also
satisfies the transfer principle \cite[Proposition 2.3, Theorem
2.4]{spec_prox}.

\smallskip
\begin{thm}[Lifts of prox-regularity]\label{thm:lift_spec}
Consider a matrix $\overline{X}\in{\bf S}^n$ and a set $Q\subset\R^n$ that is locally symmetric around the point $\bar{x}:=\lambda(\overline{X})$. Then the function $d_Q$ is locally symmetric near $\bar{x}$ and the distance to the spectral set $\lambda^{-1}(Q)$ satisfies
$$d_{\lambda^{-1}(Q)}=d_Q\circ\lambda, \textrm{ locally around } \overline{X}.$$
Furthermore, $Q$ is prox-regular at $\bar{x}$ if and only if $\lambda^{-1}(Q)$ is prox-regular at $\overline{X}$.
\end{thm}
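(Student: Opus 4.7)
The plan is to split the statement into three parts and handle them sequentially. First, local symmetry of $d_Q$ near $\bar{x}$ is essentially definitional: for $\sigma\in\fix(\bar{x})$ and $x$ near $\bar{x}$, the change of variable $y=\sigma z$ in $d_Q(\sigma x)=\inf_{y\in Q}\|\sigma x-y\|$ reduces the expression to $\inf_{\sigma z\in Q}\|x-z\|$, and invoking $\sigma Q=Q$ locally around $\bar{x}$ identifies this with $d_Q(x)$.

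For the distance formula, the bound $d_{\lambda^{-1}(Q)}(X)\ge d_Q(\lambda(X))$ follows from the Hoffman--Wielandt-type inequality $\|X-Y\|_F\ge\|\lambda(X)-\lambda(Y)\|$, applied to any $Y\in\lambda^{-1}(Q)$. The harder reverse bound calls for a witness construction: given $y\in Q$ close to $\bar{x}$, diagonalize $X=U\Diag(\lambda(X))U^{T}$ and set $Y:=U\Diag(y)U^{T}$, so that $\|X-Y\|_F=\|\lambda(X)-y\|$ by orthogonal invariance of the Frobenius norm. To check $Y\in\lambda^{-1}(Q)$, note that $\lambda(Y)$ is the nonincreasing rearrangement $\sigma y$ of $y$, and since $\bar{x}\in\R^n_{\geq}$, any such sorting permutation lies in $\fix(\bar{x})$ once $y$ is close enough to $\bar{x}$; local symmetry of $Q$ then delivers $\sigma y\in Q$. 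Minimizing over $y$ completes the step.

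Finally, for the prox-regularity equivalence, the plan is to combine the distance formula with the equality case of the Hoffman--Wielandt bound, which holds iff $X$ and $Y$ admit a simultaneous ordered diagonalization. This yields the description
$$P_{\lambda^{-1}(Q)}(X)=\{\,U\Diag(y)U^{T}:U\in{\bf O}^n\text{ orders the eigenvalues of }X\text{ and }y\in P_Q(\lambda(X))\,\}$$
for $X$ near $\overline{X}$. In the forward direction, assuming $P_Q$ single-valued near $\bar{x}$ and writing $P_Q(\lambda(X))=\{y\}$, any block of equal eigenvalues of $X$ corresponds, by persistence of spectral gaps, to a block of equal entries of $\bar{x}$; the transpositions within such a block lie in $\fix(\bar{x})$, and their action on $P_Q(\lambda(X))$ combined with single-valuedness forces $y$ to be constant on the block, making $U\Diag(y)U^{T}$ independent of the diagonalizing $U$. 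Conversely, given $z_1,z_2\in P_Q(x)$ with $x\in\R^n_{\geq}$ near $\bar{x}$, the rearrangement inequality and local symmetry of $Q$ reduce to the case when both $z_i$ are themselves sorted; then choosing $U$ to diagonalize $\overline{X}$ in sorted order and setting $X:=U\Diag(x)U^{T}$, both $U\Diag(z_i)U^{T}$ lie in $P_{\lambda^{-1}(Q)}(X)$, and prox-regularity of $\lambda^{-1}(Q)$ forces $z_1=z_2$.

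The main obstacle is the block-multiplicity argument in the forward prox-regularity direction: the diagonalizing $U$ is non-unique precisely on eigenspaces of repeated eigenvalues, so one must verify that local symmetry pushes $y=P_Q(\lambda(X))$ into the block-constant subspace on which $U\mapsto U\Diag(y)U^{T}$ becomes well-defined. Once this reconciliation is done, the remaining steps are bookkeeping in the structural formula for $P_{\lambda^{-1}(Q)}$.
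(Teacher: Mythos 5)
Note first that the paper does not prove this theorem itself: it is quoted from \cite[Proposition~2.3, Theorem~2.4]{spec_prox}, so there is no in-paper argument to compare against. Your proposal is therefore judged on its own merits as an independent proof, and the overall strategy --- prove the distance formula via the Lidskii/Hoffman--Wielandt inequality $\|X-Y\|_F\ge\|\lambda(X)-\lambda(Y)\|$ and its equality case, derive a structural description of $P_{\lambda^{-1}(Q)}$, and read off prox-regularity from single-valuedness --- is sound and is the natural route. Your local-symmetry and distance-formula steps are correct, including the key localization point that for $y$ close enough to $\bar x\in\R^n_\ge$, the sorting permutation of $y$ lies in $\fix(\bar x)$. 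Your forward prox-regularity direction (single-valued $P_Q$ plus $\fix(\lambda(X))\subset\fix(\bar x)$ forces the unique $y\in P_Q(\lambda(X))$ to be block-constant on blocks of $\lambda(X)$, so $U^T(\Diag y)U$ is independent of $U\in{\bf O}^n_X$) is also correct.

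The one genuine misstep is in the converse direction, in the sentence claiming the ``rearrangement inequality and local symmetry of $Q$ reduce to the case when both $z_i$ are themselves sorted.'' That reduction does not close the loop: sorting $z_i$ by $\tau_i\in\fix(\bar x)$ and then invoking single-valuedness of $P_{\lambda^{-1}(Q)}$ only yields $\tau_1 z_1=\tau_2 z_2$, which does not give $z_1=z_2$ unless you further control $\tau_1,\tau_2$. (Concretely, if $z_2=\tau z_1$ with a nontrivial $\tau\in\fix(\bar x)$ and $z_1$ already sorted, the reduction collapses both to the same sorted vector without ever ruling out $z_1\ne z_2$.) Fortunately the reduction is entirely unnecessary. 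For an arbitrary, possibly unsorted, $z\in P_Q(x)$ with $x=\lambda(X)\in\R^n_\ge$ and $U\in{\bf O}^n_X$, the matrix $Y:=U^T(\Diag z)U$ already lies in $P_{\lambda^{-1}(Q)}(X)$: indeed $\lambda(Y)=\tau z\in Q$ for the sorting permutation $\tau\in\fix(\bar x)$, and $\|X-Y\|_F=\|x-z\|=d_Q(x)=d_{\lambda^{-1}(Q)}(X)$. Applying this to $z_1,z_2\in P_Q(x)$ with the \emph{same} $U$ and using single-valuedness of $P_{\lambda^{-1}(Q)}(X)$ gives $U^T(\Diag z_1)U=U^T(\Diag z_2)U$, hence $z_1=z_2$ directly. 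With this repair (and the routine observation that local closedness transfers in both directions along $\lambda$), the proposal is a correct proof.
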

\smallskip

If a set $Q\subset {\bf E}$ is prox-regular at $\bar{x}$, then the {\em proximal normal cone}
$$N_Q(\bar{x}):=\R_{+}\{v\in {\bf E}: \bar{x}\in P_Q(\bar{x}+v)\},$$
and the {\em tangent cone}
$$T_Q(\bar{x}):=\Big\{\lim_{i\to\infty} \lambda_i (x_i-\bar{x}): \lambda_i\uparrow \infty \textrm{ and } x_i\in Q\Big\}.$$
%$$T_Q(\bar{x}):=\{0\}\cup\R_{+}\Big\{\lim_{i\to\infty} \frac{x_i-\bar{x}}{\|x_i-\bar{x}\|}: x_i\to\bar{x},\; x_i\in Q\Big\},$$
are closed convex cones and are polar to each other \cite[Corollary 6.29]{VA}. Here, we mean polarity
in the standard sense of convex analysis, namely for any closed convex cone $K\subset {\bf E}$, the polar
of $K$ is another closed convex cone defined by $$K^{o}:=\{v\in {\bf E}:\langle v,w \rangle\leq 0 \textrm{ for all } w\in K\}.$$

\subsection{Lifts of symmetric manifolds}
It turns out (not surprisingly) that smoothness of the projection
$P_Q$ is inherently tied to smoothness of $Q$ itself, which is the
content of the following lemma.

For any mapping $F\colon {\bf E}\to {\bf E}$, the directional derivative of $F$ at $\bar{x}$ in direction $w$ (if it exists) will be denoted by
$$DF(\bar{x})(w):=\lim_{t\downarrow 0}\frac{F(\bar{x}+tw)-F(\bar{x})}{t},$$ while the G\^{a}teaux derivative of $F$ at $\bar{x}$ (if it exists) will be denoted by $DF(\bar{x})$.

\smallskip
\begin{lem}[Smoothness of the metric projection]\label{lem:smooth_proj}
Consider a set $Q\subset{\bf E}$ that is prox-regular at a point $\bar{x}\in Q$. Then
\begin{equation}\label{eqn:ker}
DP_Q(\bar{x})(v)=0, \quad\textrm{ for any } v\in N_Q(\bar{x}).
\end{equation}
If $P_Q$ is directionally differentiable at $\bar{x}$, then we also have
\begin{equation}\label{eqn:tan}
DP_Q(\bar{x})(w)=w, \quad\textrm{ for any } w\in T_Q(\bar{x}).
\end{equation}
In particular, if $P_Q$ is G\^{a}teaux differentiable at $\bar{x}$, then $N_Q(\bar{x})$ and $T_Q(\bar{x})$ are orthogonal subspaces and $DP_Q(\bar{x})=P_{T_Q(\bar{x})}$. If $P_Q$ is ${\bf C}^k$ ($k=1,\ldots,\infty$) smooth near $\bar{x}$, then $P_Q$ automatically has constant rank near $\bar{x}$ and consequently $Q$ is a ${\bf C}^k$ manifold around $\bar{x}$.
 %and suppose furthermore that $P_M$ is directionally differentiable at $\bar{x}$. Then
\end{lem}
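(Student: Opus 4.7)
My plan is to treat the four claims in order, using throughout the single-valuedness and local Lipschitz continuity of $P_Q$ near $\bar{x}$ (standard consequences of prox-regularity) together with the Moreau-envelope identity $\nabla(\tfrac12 d_Q^2)(x)=x-P_Q(x)$ on a prox-regular neighborhood. For (\ref{eqn:ker}), given $v\in N_Q(\bar x)$ I would write $v=\lambda v_0$ with $\bar x\in P_Q(\bar x+v_0)$, and observe that the defining proximal inequality $\|y-\bar x\|^2\ge 2\langle y-\bar x,v_0\rangle$ for $y\in Q$ scales, yielding $\bar x\in P_Q(\bar x+sv_0)$ for all $s\in[0,1]$. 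Single-valuedness then forces $P_Q(\bar x+tv)=\bar x$ for small $t\ge 0$, so the directional derivative along $v$ vanishes. For (\ref{eqn:tan}), given $w\in T_Q(\bar x)$ I would choose a defining sequence $x_i\in Q$ with $(x_i-\bar x)/t_i\to w$; since $P_Q(x_i)=x_i$ we have $(P_Q(x_i)-\bar x)/t_i\to w$, and the local Lipschitz bound on $P_Q$ lets me swap $x_i$ for $\bar x+t_iw$ at cost $o(1)$, pinning any existing directional limit to $w$.

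For the G\^ateaux-differentiability statement, the key is that the Moreau identity rewrites $P_Q$ as $I-\nabla(\tfrac12 d_Q^2)$. Since $\tfrac12 d_Q^2$ is ${\bf C}^{1,1}$ near $\bar x$, and since in finite dimensions Lipschitz plus G\^ateaux forces Fr\'echet, $A:=DP_Q(\bar x)$ is the Fr\'echet derivative of a gradient of a ${\bf C}^1$ function, and therefore symmetric by the standard second-difference identity $f(\bar x+h+k)-f(\bar x+h)-f(\bar x+k)+f(\bar x)$. Combined with (\ref{eqn:ker})--(\ref{eqn:tan}) this gives $N_Q(\bar x)\subseteq\ker A$ and $T_Q(\bar x)\subseteq\rge A$, and symmetry forces $\ker A=(\rge A)^\perp$; thus $\langle v,w\rangle=0$ for all $v\in N_Q(\bar x)$ and $w\in T_Q(\bar x)$. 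Polarity promotes this orthogonality into $N_Q(\bar x)\subseteq T_Q(\bar x)^\perp=N_Q(\bar x)\cap(-N_Q(\bar x))$, so $N_Q(\bar x)=-N_Q(\bar x)$; symmetrically for $T_Q(\bar x)$. Both cones are therefore linear subspaces and orthogonal complements, so $A$ is the orthogonal projection onto $T_Q(\bar x)$.

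The last statement rests on the idempotency $P_Q\circ P_Q=P_Q$ valid on a neighborhood of $\bar x$. Differentiating yields $DP_Q(P_Q(x))\cdot DP_Q(x)=DP_Q(x)$. Evaluating at $x=y\in Q$ shows that $DP_Q(y)$ is idempotent, so its rank equals its trace, and continuity of $DP_Q$ makes the integer-valued rank locally constant along $Q$. For arbitrary $x$ near $\bar x$, since $P_Q(x)\in Q$, the identity gives $\rge DP_Q(x)\subseteq\rge DP_Q(P_Q(x))$, bounding $\mathrm{rank}\,DP_Q(x)$ above by this common value on $Q$; lower semicontinuity of rank supplies the matching lower bound. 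Hence $P_Q$ has constant rank near $\bar x$, and the constant-rank theorem locally presents it as a projection onto a ${\bf C}^k$ submanifold, which coincides with $Q$ because $P_Q$ is the identity on $Q$. The subtlest point I expect is the symmetry of $A$ in the G\^ateaux step: it is not automatic from linearity alone, but must be imported via the Moreau identity and the ${\bf C}^{1,1}$-regularity of $\tfrac12 d_Q^2$; once symmetry is secured, the subspace/polarity deduction and the constant-rank argument are routine.
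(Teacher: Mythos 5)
Your proof is correct, and the first two equations (\ref{eqn:ker}) and (\ref{eqn:tan}) are handled essentially as the paper does (scaling a proximal normal for the first, Lipschitz swap along a tangent sequence for the second). Where you depart from the paper is in the last two parts, and both of your alternatives are valid and interesting.

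For the G\^ateaux step, you import symmetry of $A := DP_Q(\bar{x})$ from the Moreau identity $\nabla(\tfrac12 d_Q^2) = I - P_Q$, the fact that in finite dimensions Lipschitz plus G\^ateaux at a point gives Fr\'echet at that point, and the pointwise Schwarz theorem (a ${\bf C}^1$ function whose gradient is Fr\'echet differentiable at a point has a symmetric second derivative there). Once $A$ is symmetric, $\ker A \perp \rge A$ together with $N_Q(\bar{x}) \subseteq \ker A$ and $T_Q(\bar{x}) \subseteq \{w : Aw = w\} \subseteq \rge A$ forces $N_Q(\bar{x}) \perp T_Q(\bar{x})$, and then the polar-cone relation $T_Q(\bar{x})^\circ = N_Q(\bar{x})$ upgrades both cones to orthogonal subspaces. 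The paper avoids second-derivative symmetry entirely by an argument purely about cones: if $N_Q(\bar{x})$ were a \emph{proper} convex subset of the subspace $\ker A$, then bipolarity inside $\ker A$ would force $T_Q(\bar{x}) \cap \ker A = N_Q(\bar{x})^\circ \cap \ker A \neq \{0\}$, contradicting (\ref{eqn:tan}); hence $N_Q(\bar{x}) = \ker A$ is already a subspace, and $T_Q(\bar{x}) = N_Q(\bar{x})^\perp$ follows by polarity. The paper's route is a bit more elementary; your route is also correct but leans on a nontrivial (if standard) pointwise symmetry theorem for Hessians.

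For the ${\bf C}^k$ constant-rank step, your idempotency argument is cleaner and more self-contained than the paper's. You differentiate $P_Q \circ P_Q = P_Q$ to get $DP_Q(P_Q(x)) \cdot DP_Q(x) = DP_Q(x)$; restricted to $Q$ this shows $DP_Q(y)$ is idempotent, so $\rank = \tr$, an integer-valued continuous function, hence locally constant on $Q$. Then the factorization identity bounds $\rank DP_Q(x)$ from above by $\rank DP_Q(P_Q(x))$ for arbitrary nearby $x$, and lower semicontinuity of rank supplies the matching lower bound. The paper instead establishes outer-semicontinuity of the tangent cone map $T_Q$ (using (\ref{eqn:tan}) and passing to the limit) and asserts without proof the inclusion $N_Q(P_Q(x)) \subset \ker DP_Q(x)$; your approach replaces these with a single algebraic manipulation, and is arguably the better proof of this part. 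Both arrive at the constant-rank theorem and conclude $Q = P_Q(B_\epsilon(\bar{x}))$ near $\bar{x}$ in the same way.
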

\begin{proof}
Observe that for any normal vector $\bar{v}\in N_Q(\bar{x})$ there exists $\epsilon >0$ so that $P_Q(\bar{x}+\epsilon'\bar{v})=\bar{x}$ for all nonnegative $\epsilon' <\epsilon$. Equation (\ref{eqn:ker}) is now immediate.

Suppose now that $P_Q$ is directionally differentiable at $\bar{x}$ and consider a vector $w\in T_Q(\bar{x})$ with $\|w\|=1$. Then there exists a sequence $x_i\in Q$ converging to $\bar{x}$ and satisfying
$w=\lim_{i\to\infty} \frac{x_i-\bar{x}}{\|x_i-\bar{x}\|}$. Define $t_i:=\|x_i-\bar{x}\|$
and observe that since $P_Q$ is Lipschitz continuous, for some constant $L$ we have
$$\frac{\|P_Q(\bar{x}+t_i w)-P_Q(x_i)\|}{t_i}\leq L\Big\|w-\frac{x_i-\bar{x}}{t_i}\Big\|,$$ and consequently this quantity converges to zero. We obtain
$$DP_Q(\bar{x})(w)=\lim_{i\to\infty}\frac{P_Q(\bar{x}+t_i w)-\bar{x}}{t_i}=\lim_{i\to\infty}\frac{P_Q(x_i)-\bar{x}}{t_i}=w,$$
as claimed.

Suppose now that $P_Q$ is G\^{a}teaux differentiable at $\bar{x}$. Then clearly from (\ref{eqn:ker}) we have $N_Q(\bar{x})\subset \ker DP_Q(\bar{x})$. If $N_Q(\bar{x})$ were a proper convex subset of $\ker DP_Q(\bar{x})$, then we would deduce $$T_Q(\bar{x})\cap  \ker DP_Q(\bar{x}) =[N_Q(\bar{x})]^{\circ}\cap \ker DP_Q(\bar{x})\neq \{0\},$$ thereby contradicting equation (\ref{eqn:tan}). Hence $N_Q(\bar{x})$ and $T_Q(\bar{x})$ are orthogonal subspaces and the equation $DP_Q(\bar{x})=P_{T_Q(\bar{x})}$ readily follows from (\ref{eqn:ker}) and (\ref{eqn:tan}).

Suppose now that $P_Q$ is ${\bf C}^k$-smooth (for $k=1,\ldots,\infty$) around $\bar{x}$.
Then clearly we have
$$\rank DP_Q(x)\geq \rank DP_Q(\bar{x}), \textrm{ for all } x \textrm{ near } \bar{x}.$$
Towards establishing equality above, we now claim that the set-valued mapping $T_Q$ is outer-semicontinuous at $\bar{x}$. To see this, consider sequences $x_i\to\bar{x}$ and $w_i\in T_Q(x_i)$, with $w_i$ converging to some vector $\bar{w}\in {\bf E}$. From equation (\ref{eqn:tan}), we deduce
$w_i=DP_Q(x_i)(w_i)$. Passing to the limit, while taking into account the continuity of $DP_Q$, we obtain $\bar{w}=DP_Q(\bar{x})(\bar{w})$. On the other hand, since $DP_Q(\bar{x})$ is simply the linear projection onto $T_{Q}(\bar{x})$, we deduce the inclusion $\bar{w}\in T_Q(\bar{x})$, and thereby establishing outer-semicontinuity of $T_Q$ at $\bar{x}$. It immediately follows that the inequality,
$\dim T_Q(x) \leq \dim T_Q(\bar{x})$, holds for all $x\in Q$ near $\bar{x}$. %Combining this with (\ref{eqn:oscn}), we conclude that $P_M$ has constant rank on $M$ near $\bar{x}$.

One can easily verify that for any point $x$ near $\bar{x}$, the inclusion $N_Q(P_Q(x))\subset\ker DP_Q(x)$ holds.
Consequently we deduce
$$\rank DP_Q(x)\leq \dim T_Q(P_Q(x))\leq \dim T_Q(\bar{x})=\rank DP_Q(\bar{x}),$$
for all $x\in {\bf E}$ sufficiently close to $\bar{x}$.
as claimed. Hence $P_Q$ has constant rank near $\bar{x}$. By the constant rank theorem, for all sufficiently small $\epsilon >0$, the set $P_Q(B_{\epsilon}(\bar{x}))$ is a ${\bf C}^k$ manifold. Observing that the set $P_Q(B_{\epsilon}(\bar{x}))$ coincides with $Q$ near $\bar{x}$ completes the proof.
\end{proof}
\smallskip

The following observation will be key. It shows that the metric projection map onto a prox-regular set is itself a gradient of a ${\bf C}^1$-smooth function. This easily follows from \cite[Proposition 3.1]{loc_diff}. In the convex case, this observation has been recorded and used explicitly for example in \cite[Proposition 2.2]{phelps} and \cite[Preliminaries]{holmes}, and even earlier in \cite{asp} and \cite{zar}.

\smallskip
\begin{lem}[Projection as a derivative]\label{lem:proj_dev}
Consider a set $Q\subset {\bf E}$ that is prox-regular at $\bar{x}$. Then the function
$$h(x):=\frac{1}{2}\|x\|^2-\frac{1}{2}d^2_Q(x),$$
is ${\bf C}^1$-smooth on a neighborhood of $\bar{x}$, with
$\nabla h(x)= P_Q(x)$ for all $x$ near $\bar{x}$.
\end{lem}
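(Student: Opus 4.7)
The plan is to reformulate the claim in terms of the squared distance $g(x) := \tfrac{1}{2}d_Q^2(x)$: since $h(x) = \tfrac{1}{2}\|x\|^2 - g(x)$, proving that $g$ is ${\bf C}^1$-smooth near $\bar{x}$ with $\nabla g(x) = x - P_Q(x)$ is equivalent to the statement in the lemma. This reformulation isolates the interesting content, which is a Moreau-style calculation for the squared distance.

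First, recall from Definition~\ref{defn:prox_set} that $P_Q$ is single-valued on some open neighborhood $U$ of $\bar{x}$; moreover, standard prox-regularity theory (e.g.\ \cite[Proposition 13.37]{VA}) guarantees, upon possibly shrinking $U$, that $P_Q$ is locally Lipschitz, hence continuous, on $U$. Fix $x \in U$, let $y := P_Q(x)$, and let $w \in {\bf E}$. Since $y \in Q$ is admissible in the infimum defining $g(x+tw)$, expanding the square gives the upper bound
$$g(x+tw) \leq \tfrac{1}{2}\|x+tw-y\|^2 = g(x) + t\langle x-y,w\rangle + \tfrac{t^2}{2}\|w\|^2.$$
Conversely, writing $y_t := P_Q(x+tw)$ for $t$ small enough that $x+tw \in U$, and using that $y_t \in Q$ is admissible in the infimum defining $g(x)$, we obtain $g(x) \leq \tfrac{1}{2}\|x - y_t\|^2$, which rearranges to
$$g(x+tw) \geq g(x) + t\langle x+tw - y_t,\,w\rangle - \tfrac{t^2}{2}\|w\|^2.$$

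Dividing both inequalities by $t$ (treating the signs $t>0$ and $t<0$ separately) and passing to the limit $t \to 0$, while invoking $y_t \to y$ from the continuity of $P_Q$ on $U$, we obtain
$$\lim_{t\to 0}\frac{g(x+tw)-g(x)}{t} = \langle x - P_Q(x),\, w\rangle.$$
Thus $g$ is G\^{a}teaux differentiable at every $x \in U$ with $\nabla g(x) = x - P_Q(x)$, and continuity of $P_Q$ on $U$ immediately upgrades this to ${\bf C}^1$-smoothness. I do not foresee a serious obstacle: the only nontrivial ingredient beyond an elementary sandwich argument is continuity of the projection, which is essentially built into the prox-regularity hypothesis via the cited local Lipschitz result.
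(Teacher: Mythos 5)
Your argument is correct. The paper offers no proof for this lemma beyond a citation to \cite[Proposition~3.1]{loc_diff} (and the earlier convex-case references), so there is no paper argument to compare against in detail; but the sandwich computation you give for $g(x)=\tfrac12 d_Q^2(x)$ is exactly the standard Moreau-envelope-style derivation that the cited sources carry out, and your two estimates (admissibility of $y=P_Q(x)$ for the upper bound, of $y_t=P_Q(x+tw)$ for the lower bound) together with $y_t\to y$ correctly yield the directional derivative $\langle x-P_Q(x),w\rangle$ at every $x$ in the neighborhood where $P_Q$ is single-valued. The final step, upgrading G\^{a}teaux differentiability with a continuous derivative to ${\bf C}^1$ on a finite-dimensional space, is standard and fine to invoke. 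One small remark: you could get continuity of $P_Q$ more cheaply than by citing local Lipschitzness --- single-valuedness of $P_Q$ near $\bar x$ together with local closedness of $Q$ already forces continuity by the usual outer-semicontinuity/compactness argument --- but citing the Lipschitz result is certainly harmless.
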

\smallskip

We are now ready to state and prove the main result of this section.
\smallskip
\begin{thm}[Spectral lifts of manifolds]\label{thm:lift_man}
Consider a matrix $\overline{X}\in{\bf S}^n$ and a set $M\subset\R^n$
that is locally symmetric around $\bar{x}:=\lambda(\overline{X})$.
Then $M$ is a ${\bf C}^{\infty}$ manifold around $\bar{x}$ if and only
if the spectral set $\lambda^{-1}(M)$ is a ${\bf C}^{\infty}$ manifold around $\overline{X}$.
\end{thm}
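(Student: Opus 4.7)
The plan is to use the metric projection as a bridge between the two sides, in the style hinted at in the introduction. For any set $Q$ that is prox-regular at a point $\bar x\in Q$, Lemma~\ref{lem:proj_dev} gives us a $C^1$ function $h_Q(x) := \frac{1}{2}\|x\|^2 - \frac{1}{2}d_Q^2(x)$ with $\nabla h_Q = P_Q$ near $\bar x$. Iterating, $h_Q$ is $C^{k+1}$-smooth near $\bar x$ if and only if $P_Q$ is $C^k$-smooth near $\bar x$. Combining with Lemma~\ref{lem:smooth_proj} and the standard fact that projection onto a $C^\infty$ manifold is $C^\infty$ near the base point, we get the equivalence:
\begin{equation*}
Q \text{ is a } C^\infty \text{ manifold around } \bar x \iff h_Q \text{ is } C^\infty \text{ around } \bar x,
\end{equation*}
provided $Q$ is prox-regular at $\bar x$ (which is automatic once $Q$ is a $C^2$ manifold there).

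The second key ingredient is the identity
\begin{equation*}
h_{\lambda^{-1}(M)}(X) = \tfrac{1}{2}\|X\|_F^2 - \tfrac{1}{2}d_{\lambda^{-1}(M)}^2(X) = \tfrac{1}{2}\|\lambda(X)\|^2 - \tfrac{1}{2}d_M^2(\lambda(X)) = h_M(\lambda(X)),
\end{equation*}
valid locally around $\overline X$. Here I use $\|X\|_F^2 = \|\lambda(X)\|^2$ together with the distance identity $d_{\lambda^{-1}(M)} = d_M\circ\lambda$ supplied by Theorem~\ref{thm:lift_spec}. Thus $h_{\lambda^{-1}(M)} = h_M\circ\lambda$ in a neighborhood of $\overline X$. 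Moreover $h_M$ is locally symmetric around $\bar x$: $\|\cdot\|^2$ is symmetric, and Theorem~\ref{thm:lift_spec} gives local symmetry of $d_M$.

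Having set this up, the equivalence follows by chaining. Assume first $M$ is a $C^\infty$ manifold around $\bar x$. Then $M$ is prox-regular at $\bar x$, and by Theorem~\ref{thm:lift_spec} so is $\lambda^{-1}(M)$ at $\overline X$, so $h_M$ and $h_{\lambda^{-1}(M)}$ are defined and $C^1$ on neighborhoods. By the first paragraph, $h_M$ is $C^\infty$ around $\bar x$; by the smoothness transfer principle (Theorem~\ref{thm:sm_lift}) applied to the locally symmetric function $h_M$, the spectral function $h_M\circ\lambda = h_{\lambda^{-1}(M)}$ is $C^\infty$ around $\overline X$; and then the first paragraph (applied to $\lambda^{-1}(M)$) shows $\lambda^{-1}(M)$ is a $C^\infty$ manifold around $\overline X$. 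The converse implication runs the same chain in reverse, with the only new input being the ``if'' direction of Theorem~\ref{thm:sm_lift}.

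The main point requiring care is the application of Theorem~\ref{thm:sm_lift}, which requires local symmetry of $h_M$ and prox-regularity on whichever side is being assumed smooth; both are handled by Theorem~\ref{thm:lift_spec}. Beyond that, the argument is almost formal: the whole strength is compressed into the representation of $P_Q$ as the gradient of an explicit symmetric function built from $d_Q^2$ and $\|\cdot\|^2$, which lets the scalar-valued transfer principle do the work that would otherwise require a delicate direct analysis of the eigenvalue map.
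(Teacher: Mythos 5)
Your proposal is correct and follows essentially the same approach as the paper's own proof: the key function $h(x)=\tfrac{1}{2}\|x\|^2-\tfrac{1}{2}d_M^2(x)$, the identity $h_{\lambda^{-1}(M)}=h_M\circ\lambda$ via Theorem~\ref{thm:lift_spec}, the smoothness transfer via Theorem~\ref{thm:sm_lift}, and the passage back from a smooth projection to a smooth manifold via Lemma~\ref{lem:smooth_proj} are exactly the ingredients used there. Your reorganization into a clean ``iff'' chain (manifold $\iff$ $h_Q$ smooth, on each side, linked by the scalar transfer principle) is a tidy packaging of the same argument, not a genuinely different route.
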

\begin{proof} Consider the function $$h(x):=
\frac{1}{2}\|x\|^2-\frac{1}{2}d^2_M(x).$$ Suppose that $M$ is a
${\bf C}^{\infty}$ manifold around $\bar{x}$. In particular $M$ is
prox-regular, see \cite[Example 13.30]{VA}. Then using
Theorem~\ref{thm:lift_spec} we deduce that $h$ is locally symmetric
around $\bar{x}$. In turn, Lemma~\ref{lem:proj_dev} implies the
equality $\nabla h=P_M$ near $\bar{x}$. Since $M$ is a ${\bf
C}^{\infty}$ manifold, the projection mapping $P_M$ is ${\bf
C}^{\infty}$-smooth near $\bar{x}$. Combining this with
Theorem~\ref{thm:sm_lift}, we deduce that the spectral function
$h\circ\lambda$ is ${\bf C}^{\infty}$-smooth near $\overline{X}$.
Observe
\begin{align*}
(h\circ\lambda)(X)&=\frac{1}{2}\|\lambda(X)\|^2-\frac{1}{2}d^2_{M}(\lambda(X))\\
&=\frac{1}{2}\|X\|^{2}_{F}-\frac{1}{2}d^2_{\lambda^{-1}(M)}(X),
\end{align*}
where the latter equality follows from Theorem~\ref{thm:lift_spec}.
Applying Theorem~\ref{thm:lift_spec}, we deduce that
$\lambda^{-1}(M)$ is prox-regular at $\overline{X}$. Combining this
with Lemma~\ref{lem:proj_dev}, we obtain equality $\nabla
(h\circ\lambda)(X)=P_{\lambda^{-1}(M)}(X)$ for all $X$ near
$\overline{X}$. Consequently the mapping $X\mapsto
P_{\lambda^{-1}(M)}(X)$ is ${\bf C}^{\infty}$-smooth near
$\overline{X}$. Appealing to Lemma~\ref{lem:smooth_proj}, we
conclude that $\lambda^{-1}(M)$ is a ${\bf C}^{\infty}$ manifold.
The proof of the converse implication is analogous. \end{proof}
\smallskip

\begin{rem}{\rm
The proof of Theorem~\ref{thm:lift_man} falls short of establishing
the lifting pro\-per\-ty for ${\bf C}^k$ manifolds, with $k$ is
finite, but not by much. The reason for that is that ${\bf C}^k$
manifolds yield projections that are only ${\bf C}^{k-1}$ smooth.
Nevertheless, the same proof shows that ${\bf C}^k$ manifolds do
lift to ${\bf C}^{k-1}$ manifolds, and conversely ${\bf C}^k$
manifolds project down by $\lambda$ to ${\bf C}^{k-1}$ manifolds. }
\end{rem}

\subsection{Dimension of the lifted manifold}

The proof of Theorem~\ref{thm:lift_man} is relatively simple and
short, unlike the involved proof of \cite{man}. One shortcoming however is that it does not a priori yield information about the dimension of the lifted manifold $\lambda^{-1}(M)$. In this section, we outline how we can use the fact that $\lambda^{-1}(M)$ is a manifold to establish a formula between the dimensions of $M$ and $\lambda^{-1}(M)$.
This section can safely be skipped upon first reading.

We adhere closely to the notation and some of the combinatorial
arguments of \cite{man} and \cite{manN}. With any point $x\in\R^{n}$
we associate a partition $\mathcal{P}_{x}=\{I_{1},\ldots,I_{\rho}\}$
of the set $\{1,\ldots,n\}$, whose elements are defined as follows:
$$i,j\in I_{\ell}\Longleftrightarrow x_{i}=x_{j}.$$

\noindent It follows readily that for $x\in\R_{\geq}^{n}$ there
exists a sequence
$$
1=i_{0}\leq i_{1}<\ldots<i_{\rho}=n
$$
such that
$$
I_{\ell}=\{i_{\ell-1},\ldots,i_{\ell}\},\text{\quad for each }\ell
\in\{1,\ldots,\rho\}.
$$
For any such partition $\mathcal{P}$ we set
\[
\Delta_{\mathcal{P}}:=\{x\in\R_{\geq}^{n}:\mathcal{P}_{x}=\mathcal{P}\}.
\]
As explained in \cite[Section~2.2]{man}, the set of all such
$\Delta_{\mathcal{P}}$'s defines an affine stratification of
$\R_{\geq}^{n}$. Observe further that for every point
$x\in\R_{\geq}^{n}$ we have
$$\lambda^{-1}(x)=\{U^{T}XU:U\in\mathbf{O}^{n}\}.$$  Let
$\mathbf{O}_{X}^{n}:=\{U\in\mathbf{O}^{n}:U^{T}XU=X\}$ denote the
stabilizer of $X$, which is a ${\bf C}^{\infty}$ manifold of dimension
$$
\dim\mathbf{O}_{X}^{n}\,=\,\dim\left( \prod_{1\leq\ell\leq\rho}
\mathbf{O}^{|I_{\ell}|}\right)  = \sum_{1\leq\ell\leq\rho}
\,\frac{|I_{\ell}|\,(|I_{\ell}|-1)}{2},
$$
as one can easily check. Since the orbit $\lambda^{-1}(x)$ is isomorphic to
$\mathbf{O}^{n}/\mathbf{O}_{X}^{n}$, it follows that it is a
submanifold of $\mathbf{S}^{n}$. A computation, which can be found in \cite{man}, then yields the equation
$$
\dim\,{\lambda^{-1}(x)=\,}\dim\mathbf{O}^{n}-\dim\mathbf{O}_{X}^{n}=
\sum_{1\leq i<j\leq\rho} \,|I_{i}|\,|I_{j}|.
$$

Consider now any locally symmetric manifold $M$ of
dimension~$d$. There is no loss of generality to assume that $M$ is
connected and has nonempty intersection with $\R_{\geq}^{n}$. Let us
further denote by $\Delta_{\ast}$ an affine stratum of the
aforementioned stratification of $\R_{\geq}^{n}$ with the property
that its dimension is maximal among all of the strata $\Delta$
enjoying a nonempty intersection with $M$. It follows that there
exists a point $\bar{x}\in M\cap\Delta_{\ast}$ and $\delta>0$ satisfying
$M\cap B(\bar{x},\delta)\subset\Delta_{\ast}$ (see
\cite[Section~3]{man} for details). Since
$\dim\,{\lambda^{-1}(M)=\dim \,\lambda^{-1}(M\cap
B(\bar{x},\delta))}$ and since $\lambda^{-1}(M\cap
B(\bar{x},\delta))$ is a fibration we obtain
\begin{equation}
\dim\,\lambda^{-1}(M)\,=\,\dim\,{M\,}+ \sum_{1\leq
i<j\leq\rho_{\ast}}
\,|I_{i}^{\ast}|\,\,|I_{j}^{\ast}|,\label{dimension}
\end{equation}
where $\mathcal{P}_{\ast}=\{I_{1}^{\ast},\ldots,I_{\rho}^{\ast}\}$
is the partition associated to $\bar{x}$ (or equivalently, to any
$x\in\Delta_{\ast}$).

\smallskip
\begin{rem} {\rm It's worth to point out that it is possible to have strata
$\Delta_{1} \neq \Delta_{2}$ of $\R_{\geq}^{n}$ of the same
dimension, but giving rise to stabilizers of different dimension for
their elements. The argument above shows that a connected locally symmetric manifold cannot
intersect simultaneously these strata. This also follows implicitly
from the forthcoming Lemma~\ref{nhad}, asserting the connectedness
of ${\lambda^{-1}(M)}$, whenever $M$ is connected.}
\end{rem}

\section{Spectral lifts of identifiable sets and partly smooth manifolds}\label{sec:lift_id}
We begin this section by summarizing some of the basic tools used in
variational analysis and nonsmooth optimization. We refer the reader
to the monographs of Borwein-Zhu \cite{Borwein-Zhu},
Clarke-Ledyaev-Stern-Wolenski \cite{CLSW}, Mordukhovich
\cite{Mord_1}, and Rockafellar-Wets \cite{VA} for more details. Unless otherwise
stated, we follow the terminology and notation of \cite{VA}.

\subsection{Variational analysis of spectral functions}
For a function $f\colon {\bf E}\rightarrow\overline{\R}$, the {\em
domain} of $f$ is $$\mbox{\rm dom}\, f:=\{x\in {\bf E}:
f(x)<+\infty\},$$ and the {\em epigraph} of $f$ is $$\mbox{\rm
epi}\, f:= \{(x,r)\in {\bf E}\times\R: r\geq f(x)\}.$$ We will say
that $f$ is {\em lower semicontinuous} ({\em lsc} for short) at a
point $\bar{x}$ provided that the inequality
$\lf_{x\to\bar{x}}f(x)\geq f(\bar{x})$ holds. If $f$ is lower
semicontinuous at every point, then we will simply say that $f$ is
lower semicontinuous. For any set $Q$, the symbols $\mbox{\rm cl}\,
Q$, $\mbox{\rm conv}\, Q$, and $\aff Q$ will denote the topological
closure, the convex hull, and the affine span of $Q$ respectively.
The symbol $\para Q$ will denote the parallel subspace of $Q$,
namely the set $\para Q:=(\aff Q)-Q$. For convex sets $Q\in\E$, the
symbols $\ri Q$ and $\rb Q$ will denote the relative interior and
the relative boundary of $Q$, respectively.

Given any set $Q\subset\E$ and a mapping $f\colon Q\to
\widetilde{Q}$, where $\widetilde{Q}$ is a subset of some other
Euclidean space ${\bf F}$, we say that $f$ is ${\bf C}^p$-{\em
smooth} if for each point $\bar{x}\in Q$, there is a neighborhood
$U$ of $\bar{x}$ and a ${\bf C}^p$-smooth mapping $\widehat{f}\colon
\E\to{\bf F}$ that agrees with $f$ on $Q\cap U$.

Recall that by Theorem~\ref{thm:sm_lift}, smoothness of functions
satisfies the Transfer Principle. Shortly, we will need a slightly
strengthened version of this result, where smoothness is considered
only relative to a certain locally symmetric subset. We record it
now.
\smallskip
\begin{cor}[Lifts of restricted smoothness]\label{cor:rest_smooth}
Consider a function $f\colon\R^n\to\overline{\R}$, a matrix $\overline{X}\in{\bf S}^n$, and a set $M\subset \R^n$ containing $\bar{x}:=\lambda(\bar{X})$. Suppose that $f$ and $M$ are locally symmetric around $\bar{x}$. Then the restriction of $f$ to $M$ is ${\bf C}^p$-smooth ($p=1,\ldots,\infty$) around $\bar{x}$ if and only if the restriction of $f\circ\lambda$ to $\lambda^{-1}(M)$ is ${\bf C}^p$-smooth around $\overline{X}$.
\end{cor}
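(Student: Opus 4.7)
The plan is to reduce both directions to Theorem~\ref{thm:sm_lift}, upgrading any $C^p$ extension of the given restricted smooth function to a locally symmetric one by averaging over the finite stabilizer $\fix(\bar x)\subset\Sigma^n$.

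For the forward implication, I would start from a $C^p$-smooth extension $\hat f$ of $f|_M$ to an open neighborhood of $\bar x$ in $\R^n$ and set
\[
\tilde f(x)\;:=\;\frac{1}{|\fix(\bar x)|}\sum_{\sigma\in \fix(\bar x)}\hat f(\sigma x).
\]
This $\tilde f$ is $C^p$-smooth and locally symmetric at $\bar x$. Local symmetry of $M$ and of $f$ forces $\hat f(\sigma x)=f(\sigma x)=f(x)$ for every $x\in M$ near $\bar x$ and every $\sigma\in \fix(\bar x)$, so $\tilde f$ agrees with $f$ on $M$ locally. Applying Theorem~\ref{thm:sm_lift} to $\tilde f$ gives that $\tilde f\circ\lambda$ is $C^p$-smooth near $\bar X$, and by continuity of $\lambda$ it coincides with $f\circ\lambda$ on $\lambda^{-1}(M)$ near $\bar X$; this is exactly the required smoothness of $(f\circ\lambda)|_{\lambda^{-1}(M)}$.

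For the converse, fix a spectral decomposition $\bar X = U_0\,\Diag(\bar x)\,U_0^T$ and let $G$ be a $C^p$-smooth extension of $(f\circ\lambda)|_{\lambda^{-1}(M)}$ to a neighborhood of $\bar X$ in $\mathbf{S}^n$. Pull $G$ back to $\R^n$ by setting $g(x):= G(U_0\,\Diag(x)\, U_0^T)$, which is $C^p$-smooth on a neighborhood of $\bar x$. The only remaining task is to show that $g$ coincides with $f$ on $M$ near $\bar x$, in which case $g$ is itself the required extension and no further appeal to Theorem~\ref{thm:sm_lift} is needed. The main obstacle, and the key observation, is this: for $x$ sufficiently close to $\bar x$, any permutation $\pi$ sorting $x$ into nonincreasing order must lie in $\fix(\bar x)$, because small perturbations only refine, rather than shuffle, the block structure of equal coordinates of $\bar x$. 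Local symmetry of $M$ then yields $\pi x\in M$, so $U_0\,\Diag(x)\,U_0^T$ lies in $\lambda^{-1}(M)$ near $\bar X$, and therefore $g(x) = (f\circ\lambda)(U_0\,\Diag(x)\, U_0^T) = f(\pi x) = f(x)$ by local symmetry of $f$. This closes the argument.
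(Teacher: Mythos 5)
Your forward direction coincides with the paper's: extend $f|_M$ to a $C^p$ function $\hat f$, symmetrize over the finite group $\fix(\bar x)$, check that the symmetrized $\tilde f$ still agrees with $f$ on $M$, and invoke Theorem~\ref{thm:sm_lift}. Your reasoning there is correct.

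Your converse direction, however, is genuinely different from the paper's, and arguably cleaner. The paper symmetrizes the matrix extension $\widetilde F$ by averaging over all of $\mathbf{O}^n$ (writing a finite-looking sum $\frac{1}{|\mathbf{O}^n|}\sum_{U\in\mathbf{O}^n}\widetilde F(U.X)$, which since $\mathbf{O}^n$ is a continuous group really needs to be read as a normalized Haar integral) to get a spectral $\widetilde F_{\rm sym}$, then writes $\widetilde F_{\rm sym}=\tilde f\circ\lambda$ and invokes Theorem~\ref{thm:sm_lift} again to pass smoothness back down to $\tilde f$ on $\R^n$. You instead bypass both the group averaging and the second invocation of Theorem~\ref{thm:sm_lift}: you pull the matrix extension $G$ back along the fixed smooth embedding $x\mapsto U_0\,\Diag(x)\,U_0^T$, and the verification that the resulting $g$ agrees with $f$ on $M$ rests on the elementary but pertinent observation that near $\bar x$ any sorting permutation only permutes within blocks of equal coordinates of $\bar x$, hence lies in $\fix(\bar x)$. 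That observation is correct: for $\|x-\bar x\|_\infty$ smaller than half the minimal gap between distinct entries of $\bar x$, the blocks of $x$ remain strictly separated, so any permutation putting $x$ into nonincreasing order stabilizes $\bar x$; local symmetry of $M$ and $f$ then give $\pi x\in M$ and $f(\pi x)=f(x)$ as you use. This buys a more elementary converse at no cost of generality; the paper's approach buys a more symmetric-looking proof whose two halves are mirror images of each other, but at the price of a continuous-group average that is slightly mis-stated as a finite sum.
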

\begin{proof}
Suppose that the restriction of $f$ to $M$ is ${\bf C}^p$-smooth around $\bar{x}$. Then there exists a ${\bf C}^p$-smooth function $\tilde{f}$, defined on $\R^n$, and agreeing with $f$ on $M$ near $\bar{x}$. Consider then the symmetrized function
$$\tilde{f}_{{\scriptsize {\rm sym}}}(x):=\frac{1}{|\fix(\bar{x})|}\sum_{\sigma\in\fix(\bar{x})}\tilde{f}(\sigma x),$$
where $|\fix(\bar{x})|$ denotes the cardinality of the set $\fix(\bar{x})$. Clearly $\tilde{f}_{{\scriptsize {\rm sym}}}$ is ${\bf C}^p$-smooth, locally symmetric around $\bar{x}$, and moreover it agrees with $f$ on $M$ near $\bar{x}$. Finally, using Theorem~\ref{thm:sm_lift}, we deduce that the spectral function $\tilde{f}_{{\scriptsize {\rm sym}}}\circ\lambda$ is ${\bf C}^p$-smooth around $\overline{X}$ and it agrees with $f\circ{\lambda}$ on $\lambda^{-1}(M)$ near $\overline{X}$. This proves the forward implication of the corollary.

\noindent To see the converse, define $F:=f\circ\lambda$, and
suppose that the restriction of $F$ to $\lambda^{-1}(M)$ is ${\bf
C}^p$-smooth around $\overline{X}$. Then there exists a ${\bf
C}^p$-smooth function $\widetilde{F}$, defined on ${\bf S}^n$, and
agreeing with $F$ on $\lambda^{-1}(M)$ near $\overline{X}$. Consider
then the function
$$\widetilde{F}_{{\scriptsize {\rm sym}}}(X):=\frac{1}{|{\bf O}^n|}\sum_{U\in{\bf O}^n}\widetilde{F}(U.X),$$
where $|{\bf O}^n|$ denotes the cardinality of the set ${\bf O}^n$. Clearly $\widetilde{F}_{{\scriptsize {\rm sym}}}$ is ${\bf C}^p$-smooth, spectral, and it agrees with $F$ on $\lambda^{-1}(M)$ near $\overline{X}$. Since $\widetilde{F}_{{\scriptsize {\rm sym}}}$ is spectral, we deduce that there is a symmetric function $\tilde{f}$ on $\R^n$ satisfying $\widetilde{F}_{{\scriptsize {\rm sym}}}=\tilde{f}\circ\lambda$. Theorem~\ref{thm:sm_lift} then implies that $\tilde{f}$ is ${\bf C}^p$-smooth. Hence to complete the proof, all we have to do is verify that $\tilde{f}$ agrees with $f$ on $M$ near $\bar{x}$. To this end consider a point $x\in M$ near $\bar{x}$ and choose a permutation $\sigma\in\fix(\bar{x})$ satisfying $\sigma x\in\R^n_{\geq}$. Let $U\in {\bf O}^n$ be such that $\overline{X}=U^T(\Diag \bar{x})U$. Then we have
$$\tilde{f}(x)=\tilde{f}(\sigma x)=\widetilde{F}_{{\scriptsize {\rm sym}}}\big(U^T(\Diag x)U\big)=F\big(U^T(\Diag x)U\big)=f(\sigma x)=f(x),$$ as claimed.
\end{proof}
\smallskip

Subdifferentials are the primary variation-analytic tools for studying general nonsmooth functions $f$ on $\E$.
\smallskip
\begin{defn}[Subdifferentials]
{\rm Consider a function $f\colon\E\to\overline{\R}$ and a point $\bar{x}$ with $f(\bar{x})$ finite.
\begin{itemize}
\item[{\rm (i)}] The {\em Fr\'{e}chet subdifferential} of $f$ at $\bar{x}$, denoted
$\hat{\partial}f(\bar{x})$, consists of all vectors $v \in \E$ satisfying $$f(x)\geq f(\bar{x})+\langle v,x-\bar{x} \rangle +o(\|x-\bar{x}\|).$$
\item [{\rm (ii)}] The {\em limiting subdifferential} of $f$ at $\bar{x}$, denoted $\partial f(\bar{x})$, consists of all vectors $v\in\E$ for which there exist sequences $x_i\in\E$ and $v_i\in\hat{\partial} f(x_i)$ with $(x_i,f(x_i),v_i)\to(\bar{x},f(\bar{x}),v)$.
%\item The {\em horizon subdifferential} of $f$ at $\bar{x}$, denoted $\partial^{\infty} f(\bar{x})$, is the cone generated by all vectors $v\in\R^n$ for which there exist sequences $x_i\in\R^n$ and $v_i\in\hat{\partial} f(x_i)$ with $(x_i,f(x_i),\frac{v_i}{\|v_i\|})\to(\bar{x},f(\bar{x}),v)$.
%\item The {\em Clarke subdifferential} of $f$ at $\bar{x}$, denoted $\partial_c f(\bar{x})$, is  defined by $$\partial_c f(\bar{x}):=\cl\conv\, (\partial f(\bar{x})+\partial^{\infty} f(\bar{x})).$$
\end{itemize}}
\end{defn}
\smallskip
%\begin{defn}[Directional prox-regularity for sets]\label{defn:dir_prox}{\ }{\\}
%{\rm Consider a set $Q\subset\R^n$, a point $\bar{x}\in Q$, and a normal vector $\bar{v}\in N_Q(\bar{x})$. We say that $Q$ is {\em prox-regular at} $\bar{x}$ {\em for} $\bar{v}$ if $Q$ is locally closed at $\bar{x}$ and there exist real numbers $\epsilon >0$ and $r >0$ such that the implication
%\begin{equation*}
%\left.
%\begin{array}{cc}
%x\in Q, &v\in N_Q(x) \\
%|x-\bar{x}|<\epsilon, &|v-\bar{v}|<\epsilon
%\end{array}
%\right\}
%\Rightarrow
%P_{\mathsmaller{Q\cap B_{\epsilon}(\bar{x})}}(x+r^{-1}v)=x,
%\end{equation*}
%holds.
%}
%\end{defn}

%\begin{defn}[Directional prox-regularity for functions]{\ }{\\}
%{\rm We say that a function $f\colon\R^n\to\overline{\R}$ is {\em prox-regular at} $\bar{x}$ {\em for} $\bar{v}\in\partial f(\bar{x})$ if the epigraph $\epi f$ is prox-regular at $(\bar{x},f(\bar{x}))$ for the vector $(\bar{v},-1)$.
%}
%\end{defn}

Let us now recall from \cite[Proposition 2]{lag} the following
lemma, which shows that subdifferentials behave as one would expect
in presence of symmetry.
%Furthermore, $M$ is a {\em locally minimal identifiable} set at $x$ for %$v\in\hat{\partial} f(x)$ if for any other identifiable set $M'$ at $x$ for $v$, the inclusion
%$$M\subset M' \textrm{ holds locally near } x.$$
%Clearly, locally identifiable sets, when they exist, are locally unique.
\smallskip
\begin{lem}[Subdifferentials under symmetry]
Consider a function $f\colon\R^n\to\overline{\R}$ that is locally symmetric at $\bar{x}$. Then the equation
$$\partial f(\sigma x)=\sigma \partial f(x),\quad \textrm{ holds for any } \sigma\in \fix(\bar{x}) \textrm{ and all } x \textrm{ near }\bar{x}.$$
Similarly, in terms of the spectral function $F:=f\circ\lambda$, we have
$$\partial F(U.X)=U.(\partial F(X)),\quad \textrm{ for any } U\in {\bf O}^n.$$
\end{lem}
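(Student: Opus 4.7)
Both assertions reduce to a single general observation: if a function $g$ on a Euclidean space is invariant under a linear isometry $\phi$ on a neighborhood $V$ of a point, then on $V$ the Fréchet subdifferential satisfies $\hat\partial g(\phi x)=\phi(\hat\partial g(x))$, and hence, by the sequential construction of the limiting subdifferential together with the continuity of $\phi$, also $\partial g(\phi x)=\phi(\partial g(x))$. The plan is to prove this observation once and then specialize $\phi$ in the two ways the lemma requires.

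To establish the observation at the Fréchet level, I would take $v\in\hat\partial g(x)$ and check that $\phi v\in\hat\partial g(\phi x)$ by the change of variables $y=\phi z$ in the defining inequality. Because $\phi$ is a linear isometry,
$$\|y-\phi x\|=\|z-x\|\quad\text{and}\quad \langle \phi v,\,y-\phi x\rangle=\langle \phi v,\,\phi(z-x)\rangle=\langle v,\,z-x\rangle,$$
while local $\phi$-invariance of $g$ gives $g(y)=g(z)$ and $g(\phi x)=g(x)$. The inequality defining $v\in\hat\partial g(x)$ therefore transcribes directly into the one defining $\phi v\in\hat\partial g(\phi x)$; applying the same argument to $\phi^{-1}$ yields equality. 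The limiting version then follows because $v_i\in\hat\partial g(x_i)$ and $(x_i,g(x_i),v_i)\to(x,g(x),v)$ imply $\phi v_i\in\hat\partial g(\phi x_i)$ and $(\phi x_i,g(\phi x_i),\phi v_i)\to(\phi x,g(\phi x),\phi v)$ by continuity.

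Applying this to $\phi=\sigma\in\fix(\bar x)$ and $g=f$ proves the first equation; the only bookkeeping is that, since $\sigma\bar x=\bar x$ and $\sigma$ is continuous, the local-symmetry neighborhood of $\bar x$ can be shrunk to a $\sigma$-invariant one, which is what is needed for the transcription above to make sense. For the second equation I would apply the observation with $\phi\colon X\mapsto U.X=U^TXU$, a linear isometry of $({\bf S}^n,\|\cdot\|_F)$ by the cyclic trace identity
$$\langle U.X,\,U.Y\rangle=\tr(U^TXU\,U^TYU)=\tr(U^TXYU)=\tr(XY)=\langle X,Y\rangle.$$
Here no locality is needed, since $\lambda(U.X)=\lambda(X)$ makes $F=f\circ\lambda$ globally $\phi$-invariant. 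There is no substantial obstacle; the whole argument is an isometric change of variable, and the only mild subtlety is the neighborhood shrinking in the symmetric case.
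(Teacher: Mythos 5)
The paper does not prove this lemma; it cites it directly as \cite[Proposition 2]{lag}, so there is no in-paper proof to compare against. That said, your proof is correct and is the standard argument for such equivariance statements: the Fr\'echet subdifferential is preserved by any linear isometry under which the function is locally invariant, because the defining liminf inequality is unchanged by the isometric change of variables $y=\phi z$, and the limiting subdifferential inherits this from its sequential construction. You also correctly observe the small but important asymmetry between the two parts: for the first identity you need to shrink to a $\sigma$-invariant neighborhood of $\bar x$ (possible since $\sigma\bar x=\bar x$), whereas $F=f\circ\lambda$ is globally ${\bf O}^n$-invariant regardless of any symmetry of $f$, because $\lambda(U.X)=\lambda(X)$ identically, so no locality is needed there.
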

\smallskip
\begin{rem}\label{rem:sym_sub}
{\rm
In particular, if $f\colon\R^n\to\overline{\R}$ is locally symmetric around $\bar{x}$, then the sets $\hat{\partial} f(\bar{x})$, $\ri \hat{\partial} f(\bar{x})$, $\rb \hat{\partial} f(\bar{x})$, $\aff \hat{\partial} f(\bar{x})$, and $\para \hat{\partial} f(\bar{x})$ are invariant under the action of the group $\fix  (\bar{x})$.}
\end{rem}

\smallskip
The following result is the cornerstone for the variational theory of spectral mappings \cite[Theorem 6]{lag}.
%\begin{thm}[Subdifferential under symmetry]\label{thm:adrian}
%If $f\colon\R^n\to\overline{\R}$ is a lsc symmetric function, then
%$$\partial (f\circ \lambda)(X)=\{U^T (\diag v) U: v\in \partial f(\lambda(X)) \textrm{ %and } U\in {\bf O}^n_X\},$$
%where
%$${\bf O}^n_X=\{U\in {\bf O}^n: X=U^T(\diag \lambda(X))U\}.$$
%\end{thm}
%Theorem~\ref{thm:adrian} can easily be extended to lsc functions that are only locally symmetric.

\smallskip
\begin{thm}[Subdifferential under local symmetry]
Consider a lsc function $f\colon\R^n\to\overline{\R}$ and a symmetric matrix $X\in {\bf S}^{n}$, and suppose that $f$ is locally symmetric at $\lambda(X)$. Then we have
$$\partial (f\circ \lambda)(X)=\{U^T (\Diag v) U: v\in \partial f(\lambda(X)) \textrm{ and } U\in {\bf O}^n_X\},$$
where
$${\bf O}^n_X=\{U\in {\bf O}^n: X=U^T(\Diag \lambda(X))U\}. $$
\end{thm}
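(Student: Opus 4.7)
The plan is to reduce the assertion to the globally $\Sigma^n$-symmetric case, \cite[Theorem~6]{lag}, by constructing a globally symmetric surrogate for $f$ that agrees with $f$ on a neighborhood of $\bar{x}:=\lambda(X)$. The reduction exploits that the limiting subdifferential is a local object: if $f$ and $\tilde f$ coincide on an open set $U\ni\bar{x}$, then $\partial f(\bar{x})=\partial\tilde f(\bar{x})$, and, since $\lambda$ is continuous, $f\circ\lambda$ and $\tilde f\circ\lambda$ coincide on the open neighborhood $\lambda^{-1}(U)$ of $X$, whence $\partial(f\circ\lambda)(X)=\partial(\tilde f\circ\lambda)(X)$.

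To build the surrogate I would fix $\epsilon>0$ small enough to guarantee two properties: (a) $f$ is $\fix(\bar{x})$-invariant on $B_{\epsilon}(\bar{x})$, available from local symmetry; and (b) $\|\sigma\bar{x}-\bar{x}\|>2\epsilon$ for every $\sigma\in\Sigma^n\setminus\fix(\bar{x})$, which holds for all sufficiently small $\epsilon$ since the quantities $\|\sigma\bar{x}-\bar{x}\|$ with $\sigma\notin\fix(\bar{x})$ form a finite set of strictly positive numbers. Property (b) ensures that, as $\sigma$ runs over distinct right cosets of $\fix(\bar{x})$ in $\Sigma^n$, the balls $\sigma B_{\epsilon}(\bar{x})$ are pairwise disjoint. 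On the open $\Sigma^n$-invariant set $V:=\Sigma^n\cdot B_{\epsilon}(\bar{x})$ one then well-defines $\tilde f$ by $\tilde f(\sigma y):=f(y)$ for $y\in B_{\epsilon}(\bar{x})$ and $\sigma\in\Sigma^n$: an equality $\sigma_1 y_1=\sigma_2 y_2$ with $y_1,y_2\in B_{\epsilon}(\bar{x})$ forces $\sigma_2^{-1}\sigma_1\in\fix(\bar{x})$ by (b), whence (a) yields $f(y_1)=f(y_2)$. By construction, $\tilde f$ is $\Sigma^n$-symmetric on $V$ and coincides with $f$ throughout $B_{\epsilon}(\bar{x})$.

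To finish, I would extend $\tilde f$ to all of $\R^n$ while preserving $\Sigma^n$-symmetry and lower semi-continuity; for example, setting $\tilde f\equiv+\infty$ on $\R^n\setminus V$ and passing to the lsc envelope produces a globally symmetric lsc function that still coincides with $f$ on a smaller neighborhood of $\bar{x}$ (because $f$ is lsc there). Applying \cite[Theorem~6]{lag} to this globally symmetric lsc $\tilde f$ gives
$$\partial(\tilde f\circ\lambda)(X)=\{U^T(\Diag v)U:v\in\partial\tilde f(\bar{x}),\ U\in{\bf O}^n_X\},$$
and combining with the identifications $\partial\tilde f(\bar{x})=\partial f(\bar{x})$ and $\partial(\tilde f\circ\lambda)(X)=\partial(f\circ\lambda)(X)$ from the opening paragraph produces the asserted formula.

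The only delicate point is the globalization step: the extension must be simultaneously lsc and $\Sigma^n$-symmetric so that \cite[Theorem~6]{lag} is available. The lsc-envelope construction suffices, as does gluing $\tilde f$ to a trivial globally symmetric lsc background via a symmetric smooth cutoff supported on a slightly larger invariant neighborhood. In either case, all of the substantive spectral content is carried by \cite[Theorem~6]{lag}; the local-symmetry hypothesis only affects how one produces a globally symmetric proxy through the disjoint-coset decomposition described above.
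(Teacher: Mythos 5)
Your approach is exactly what the paper hints at but leaves to the reader: the paper cites this result directly as \cite[Theorem~6]{lag} (stated there for globally $\Sigma^n$-symmetric functions) and remarks earlier, when introducing local symmetry, that ``the proofs follow by a reduction to the symmetric case by simple symmetrization arguments, and hence we will omit details.'' Your construction of a globally symmetric lsc surrogate $\tilde f$ --- using the pairwise disjointness of the translates $\sigma B_\epsilon(\bar x)$ across cosets of $\fix(\bar x)$ to transport $f$ over the saturation $\Sigma^n\cdot B_\epsilon(\bar x)$, then passing to the lsc envelope of the $+\infty$ extension --- is a correct and careful instantiation of that omitted reduction, and the locality of the limiting subdifferential does the rest. (A cosmetic slip: the relevant equivalence $\sigma_1\bar x=\sigma_2\bar x\iff\sigma_2^{-1}\sigma_1\in\fix(\bar x)$ indexes \emph{left} cosets $\sigma\fix(\bar x)$, not right cosets; this does not affect the argument.) The only point worth flagging is the globalization step you yourself identify as delicate: one should note that the lsc envelope leaves $\tilde f$ unchanged on the open set $B_\epsilon(\bar x)$, since $f$ is lsc there and lowering can occur only where the raw extension fails to be lsc (i.e., near $\partial V$), and that taking lsc envelopes commutes with the $\Sigma^n$-action, so symmetry is preserved. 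With that observed, the proof is complete and agrees in spirit with the paper's (unwritten) argument.
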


It is often useful to require a certain uniformity of the
subgradients of the function. This is the content of the following
definition \cite[Definition~1.1]{prox_reg}.

\smallskip
\begin{defn}[Directional prox-regularity]
{\rm A function $f\colon\E\to\overline{\R}$ is called {\em
prox-regular at} $\bar{x}$ {\em for} $\bar{v}$, with
$\bar{v}\in\partial f(\bar{x})$, if $f$ is locally lsc at $\bar{x}$
and there exist $\epsilon >0$ and $\rho >0$ so that the inequality
$$f(y)\geq f(x)+\langle v,y-x\rangle -\frac{\rho}{2}\|y-x\|^2,$$ holds
whenever $x,y\in B_{\epsilon}(\bar{x})$, $v\in
B_{\epsilon}(\bar{v})\cap\partial f(x)$, and
$f(x)<f(\bar{x})+\epsilon$.}

\noindent {\rm The function $f$ is called {\em prox-regular at}
$\bar{x}$, if it is finite at $\bar{x}$ and $f$ is prox-regular at
$\bar{x}$ for every subgradient $v\in
\partial f(\bar{x})$.}

\noindent{\rm A set $Q\subset\R^n$ is {\em prox-regular at
}$\bar{x}$ {\em for} $\bar{v}\in N_Q(\bar{x})$ provided that the
indicator function $\delta_Q$ is prox-regular at $\bar{x}$ for
$\bar{v}$. }
\end{defn}
\smallskip

In particular ${\bf C}^2$-smooth functions and lsc, convex functions are prox-regular at each of their points \cite[Example 13.30, Proposition 13.34]{VA}.
\smallskip

\begin{rem}
{\rm A set $Q\subset\E$ is prox-regular at $\bar{x}$, in the sense above, if and only if it is prox-regular in the sense of Definition~\ref{defn:prox_set}. For a proof, see for example \cite[Exercise 13.38]{VA}.
}
\end{rem}
\smallskip

The following theorem shows that directional prox-regularity also satisfies the Transfer Principle \cite[Theorem 4.2]{spec_prox}.

\smallskip
\begin{thm}[Directional prox-regularity under spectral lifts]\label{thm:prox_lift} \hfill {\\}
Consider a lsc function $f\colon\R^n\to\overline{\R}$ and a symmetric matrix $\bar{X}$.
Suppose that $f$ is locally symmetric around $\bar{x}:=\lambda(\overline{X})$. Then $f$
is prox-regular at $\bar{x}$ if and only if $f\circ \lambda$ is prox-regular at $\overline{X}$.
\end{thm}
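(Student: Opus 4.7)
The plan is to derive both implications from the subdifferential formula of the preceding theorem combined with two classical eigenvalue inequalities: the von Neumann trace inequality $\tr(AB)\le \langle \lambda(A),\lambda(B)\rangle$ and the Lidskii-type estimate $\|\lambda(Y)-\lambda(X)\|\le\|Y-X\|_F$. Together these let one push the quadratic-affine inequality defining prox-regularity back and forth across the spectral lift.

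For the easier ``if'' direction, suppose $F:=f\circ\lambda$ is prox-regular at $\overline{X}$. Using orthogonal invariance I may assume $\overline{X}=\Diag\bar{x}$ with $\bar{x}\in\R^n_{\geq}$. Fix any $\bar{v}\in\partial f(\bar{x})$. The subdifferential formula (taking $U=I\in{\bf O}^n_{\overline{X}}$) shows that $\overline{V}:=\Diag\bar{v}$ lies in $\partial F(\overline{X})$. Restricting the prox-regular inequality for $F$ at $\overline{X}$ for $\overline{V}$ to diagonal pairs $X=\Diag x$, $Y=\Diag y$ near $\overline{X}$ and to diagonal subgradients $V=\Diag v$ with $v\in\partial f(x)$ close to $\bar{v}$ (guaranteed by the subdifferential formula when $x$ is sorted, with the general case reduced via a permutation in $\fix(\bar{x})$) yields the defining estimate of prox-regularity of $f$ at $\bar{x}$ for $\bar{v}$: indeed, symmetry of $f$ gives $F(\Diag z)=f(z)$, while $\langle\Diag a,\Diag b\rangle=\langle a,b\rangle$ and $\|\Diag a\|_F=\|a\|$. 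Since $\bar{v}\in\partial f(\bar{x})$ was arbitrary, $f$ is prox-regular at $\bar{x}$.

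For the ``only if'' direction, suppose $f$ is prox-regular at $\bar{x}$, take any $\overline{V}\in\partial F(\overline{X})$, and write $\overline{V}=\overline{U}^T(\Diag\bar{v})\overline{U}$ with $\bar{v}\in\partial f(\bar{x})$ and $\overline{U}\in{\bf O}^n_{\overline{X}}$. For $X$ near $\overline{X}$ and $V\in\partial F(X)$ near $\overline{V}$, the same formula produces $V=U^T(\Diag v)U$ with $U\in{\bf O}^n_X$ and $v\in\partial f(\lambda(X))$; continuity of $\lambda$, outer semicontinuity of $\partial f$, and the $\fix(\bar{x})$-equivariance of Remark~\ref{rem:sym_sub} allow $v$ to be chosen close to $\bar{v}$. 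Simultaneous diagonalization of $X$ and $V$ by $U$ yields the identity $\langle V,X\rangle=\langle v,\lambda(X)\rangle$, while the von Neumann trace inequality gives $\langle V,Y\rangle\le\langle v,\lambda(Y)\rangle$, so that
\[
\langle V,Y-X\rangle\;\le\;\langle v,\lambda(Y)-\lambda(X)\rangle.
\]
Plugging this into the prox-regular inequality for $f$ at $\bar{x}$ for $\bar{v}$ applied to the pair $(\lambda(X),\lambda(Y))$ with subgradient $v$, and invoking the Lidskii bound to upgrade the quadratic error term from $\|\lambda(Y)-\lambda(X)\|^2$ to $\|Y-X\|_F^2$, produces precisely the prox-regular inequality for $F$ at $\overline{X}$ for $\overline{V}$.

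The main obstacle is the ordering ambiguity in the subdifferential formula: the vector $v\in\partial f(\lambda(X))$ representing $V$ need not itself be nonincreasing, so the crucial bound $\langle V,Y\rangle\le\langle v,\lambda(Y)\rangle$ is not a bare application of von Neumann but requires a rearrangement argument using local symmetry of $f$ together with the $\fix(\bar{x})$-action on subgradients (Remark~\ref{rem:sym_sub}) to align $v$ with the block structure of $\lambda(X)$. Once this alignment is in place, and once continuity is exploited to ensure that $v$ can indeed be selected close to $\bar{v}$ as $X\to\overline{X}$ and $V\to\overline{V}$, the remainder of the argument is routine bookkeeping translating the eigenvalue-level quadratic-affine estimate into its matrix-level analogue.
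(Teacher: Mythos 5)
The paper itself does not prove this theorem; it is cited verbatim from \cite[Theorem 4.2]{spec_prox}, so your attempt is necessarily a different route. Your ``if'' direction (restricting the prox-regularity inequality for $F=f\circ\lambda$ to diagonal arguments and diagonal subgradients) is sound. The ``only if'' direction, however, has a genuine gap that your closing paragraph flags but does not resolve, and the proposed fix does not work.

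The issue is the central inequality $\langle V,Y\rangle\leq\langle v,\lambda(Y)\rangle$. Writing $V=U^T(\Diag v)U$ and $X=U^T(\Diag\lambda(X))U$ with $U\in{\bf O}^n_X$ and $v\in\partial f(\lambda(X))$, von Neumann gives $\langle V,Y\rangle\leq\langle\lambda(V),\lambda(Y)\rangle=\langle v^{\downarrow},\lambda(Y)\rangle$ where $v^{\downarrow}$ is $v$ sorted in nonincreasing order. Since $\lambda(Y)$ is nonincreasing, the rearrangement inequality gives $\langle v^{\downarrow},\lambda(Y)\rangle\geq\langle v,\lambda(Y)\rangle$, which is the \emph{wrong} direction: the von Neumann upper bound sits above the quantity you need, not below it. Your proposed remedy --- using the $\fix(\bar{x})$-action (or more precisely $\fix(\lambda(X))$) to ``align $v$ with the block structure'' --- permits sorting $v$ only \emph{within} the blocks of equal entries of $\lambda(X)$, and this does not in general make $v$ globally nonincreasing.

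A concrete counterexample: take $f(x_1,x_2)=-(x_1-x_2)^2$, which is ${\bf C}^{\infty}$, symmetric, and hence prox-regular everywhere, with lift $F(X)=-(\lambda_1(X)-\lambda_2(X))^2$ also ${\bf C}^{\infty}$. At $\bar{x}=(1,0)$ we have $\bar{v}=\nabla f(\bar{x})=(-2,2)$, which is strictly increasing, and $\fix(\bar{x})$ is trivial, so no block rearrangement is available. With $\overline{X}=\Diag(1,0)$, $\overline{V}=\Diag(-2,2)$ and, say, $Y=\left(\begin{smallmatrix}0&1\\1&1\end{smallmatrix}\right)$, a direct computation gives $\langle\overline{V},Y-\overline{X}\rangle=4$ while $\langle\bar{v},\lambda(Y)-\lambda(\overline{X})\rangle=-2\sqrt{5}+2<0$; your inequality fails by a wide margin. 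The conclusion of the theorem of course still holds here (both $f$ and $F$ are smooth), but the derivation breaks. The underlying reason the approach works in the \emph{convex} case but not here is that, for convex symmetric $f$, applying the subgradient inequality at the transposed point $(ij)\cdot x$ forces $v$ and $x$ to be similarly ordered; the analogous attempt for a merely prox-regular $f$ leaves a quadratic error $\tfrac{\rho}{2}\|(ij)\cdot x-x\|^2$ of order $(x_i-x_j)^2$, which is not small, so the argument collapses. A correct proof needs a different mechanism --- the cited reference works through the Moreau envelope and the characterization of prox-regularity via local single-valuedness and Lipschitz continuity of the proximal mapping, exploiting the identity $e_r(f\circ\lambda)=(e_rf)\circ\lambda$.
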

\smallskip

The following two standard results of Linear Algebra will be important for us \cite[Proposition 3]{lag}.
\smallskip

\begin{lem}[Simultaneous Conjugacy]\label{lem:sim_conj}
Consider vectors $x,y,u,v\in\R^n$. Then there exists an orthogonal matrix $U\in{\bf O}^n$ with
$$\Diag x= U^{T}(\Diag u)U \quad{\textrm and }\quad \Diag y =U^{T}(\Diag v) U,$$
if and only if there exists a permutation $\sigma\in\Sigma^n$ with $x=\sigma u$ and $y=\sigma v$.
\end{lem}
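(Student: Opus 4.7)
The ``if'' direction is immediate: any permutation $\sigma\in\Sigma^n$ embeds as an orthogonal matrix satisfying $\sigma^{T}(\Diag u)\sigma=\Diag(\sigma u)$ (with the standard convention), so taking $U=\sigma$ fulfills both identities simultaneously.

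For the ``only if'' direction, my plan is to analyze the action of $U$ on the standard basis vectors $e_1,\ldots,e_n$. Rewriting the hypotheses as $U(\Diag x)U^{T}=\Diag u$ and $U(\Diag y)U^{T}=\Diag v$, each vector $Ue_i$ is simultaneously an eigenvector of $\Diag u$ with eigenvalue $x_i$ and of $\Diag v$ with eigenvalue $y_i$. Since the eigenspaces of a diagonal matrix are spanned by subsets of the standard basis, the support of $Ue_i$ must be contained in
$$S_i:=\{j\in\{1,\ldots,n\}\ :\ u_j=x_i\text{ and }v_j=y_i\}.$$
Equivalently, $U_{ji}\neq 0$ forces $u_j=x_i$ and $v_j=y_i$.

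Now, for each pair $(a,b)\in\R^2$, let $A_{(a,b)}:=\{i:(x_i,y_i)=(a,b)\}$ and $B_{(a,b)}:=\{j:(u_j,v_j)=(a,b)\}$. The support condition forces $U_{ji}=0$ whenever $i\in A_{(a,b)}$ and $j\notin B_{(a,b)}$, and symmetrically whenever $j\in B_{(a,b)}$ and $i\notin A_{(a,b)}$. Using that rows and columns of $U$ have unit Euclidean norm, summing $U_{ji}^{2}$ over the rectangle $A_{(a,b)}\times B_{(a,b)}$ in two different ways yields
$$|A_{(a,b)}|\ =\sum_{i\in A_{(a,b)}}\sum_{j}U_{ji}^{2}\ =\sum_{i\in A_{(a,b)}}\sum_{j\in B_{(a,b)}}U_{ji}^{2}\ =\sum_{j\in B_{(a,b)}}\sum_{i}U_{ji}^{2}\ =\ |B_{(a,b)}|.$$
Thus the multisets $\{(x_i,y_i)\}_{i=1}^{n}$ and $\{(u_j,v_j)\}_{j=1}^{n}$ agree, and any bijection $\sigma\in\Sigma^{n}$ realizing this identification satisfies $x_i=u_{\sigma(i)}$ and $y_i=v_{\sigma(i)}$ for each $i$.

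The main obstacle — and the reason one cannot simply read off $\sigma$ from spectral equality alone — is the possibility of repeated entries in $u$ or $v$, which grants $U$ genuine freedom within each (joint) eigenspace beyond mere permutation. The joint counting argument above is exactly what rules out any ``mixing'' that would fail to come from a single permutation.
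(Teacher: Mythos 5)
Your proof is correct, and the counting argument is clean. Note that the paper itself does not contain a proof of this lemma: it is invoked as a standard fact of linear algebra and cited to Lewis (\emph{Nonsmooth analysis of eigenvalues}, Proposition~3), so there is no in-paper argument against which to compare. A few remarks on your write-up. The rewriting step $U(\Diag x)U^{T}=\Diag u$ correctly places $Ue_i$ in the $x_i$-eigenspace of $\Diag u$ and (by the parallel identity for $v$) in the $y_i$-eigenspace of $\Diag v$, so the support constraint $U_{ji}\neq 0\Rightarrow (u_j,v_j)=(x_i,y_i)$ is exactly right. The two block-vanishing conditions you invoke ($U_{ji}=0$ when $i\in A_{(a,b)}$, $j\notin B_{(a,b)}$, and when $j\in B_{(a,b)}$, $i\notin A_{(a,b)}$) are in fact both instances of this single implication, and they are precisely what makes the double-counting via unit rows and columns legitimate; the resulting equality $|A_{(a,b)}|=|B_{(a,b)}|$ yields the multiset identity and hence the common permutation. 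One minor convention point: you conclude $x_i=u_{\sigma(i)}$, $y_i=v_{\sigma(i)}$, whereas the lemma is stated as $x=\sigma u$, $y=\sigma v$ with $(\sigma u)_i=u_{\sigma^{-1}(i)}$; this is just the inverse permutation and does not affect correctness. Overall the argument is complete and self-contained, arguably more elementary than an appeal to block structure of commuting orthogonal conjugations.
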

\smallskip

\begin{cor}[Conjugations and permutations]\label{cor:conjug2}
Consider vectors $v_1,v_2\in\R^n$ and a matrix $X\in{\bf S}^n$.
Suppose that for some $U_1,U_2\in {\bf O}_X^n$ we have
$$U_1^{T}(\Diag v_1)U_1=U_2^{T}(\Diag v_2)U_2.$$
Then there exists a permutation $\sigma\in \fix(\lambda(X))$
satisfying $\sigma v_1=v_2$.
\end{cor}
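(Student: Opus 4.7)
The plan is to reduce the statement to a direct application of Lemma~\ref{lem:sim_conj} (Simultaneous Conjugacy) by recognizing that a single orthogonal matrix simultaneously conjugates the two relevant diagonal matrices.

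First I would set $W := U_2 U_1^T \in \mathbf{O}^n$. Multiplying the hypothesis
$$U_1^T(\Diag v_1)U_1 = U_2^T(\Diag v_2)U_2$$
on the left by $U_1$ and on the right by $U_1^T$ yields
$$\Diag v_1 = W^T(\Diag v_2)W.$$
Next I would exploit membership in the stabilizer: writing $x:=\lambda(X)$, the definition $U_i\in\mathbf{O}^n_X$ gives $U_1^T(\Diag x)U_1 = X = U_2^T(\Diag x)U_2$, and the identical conjugation trick produces
$$\Diag x = W^T(\Diag x) W.$$

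At this point I would invoke Lemma~\ref{lem:sim_conj} with the data $(x,y,u,v)=(x,v_1,x,v_2)$ and the orthogonal matrix $W$: the two displayed equalities are exactly the hypotheses of the lemma, so there exists a permutation $\sigma\in\Sigma^n$ with $x=\sigma x$ and $v_1=\sigma v_2$. The first equation is precisely $\sigma\in\fix(\lambda(X))$, and applying $\sigma^{-1}\in\fix(\lambda(X))$ to the second gives $\sigma^{-1}v_1=v_2$. Renaming $\sigma^{-1}$ as the desired permutation finishes the proof.

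There is no serious obstacle here: the entire content of the corollary is the algebraic observation that the product $U_2 U_1^T$ witnesses the hypothesis of the Simultaneous Conjugacy lemma for both $\Diag x$ and the pair $(\Diag v_1,\Diag v_2)$. The only small care needed is bookkeeping the direction of the resulting permutation, which is handled by replacing $\sigma$ with its inverse (using that $\fix(\lambda(X))$ is a group).
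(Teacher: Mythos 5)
Your proof is correct and follows essentially the same route as the paper: both reduce the statement to Lemma~\ref{lem:sim_conj} by forming a single orthogonal matrix that simultaneously conjugates $\Diag\lambda(X)$ to itself and relates $\Diag v_1$ to $\Diag v_2$. The only (cosmetic) difference is your choice of $W=U_2U_1^T$ rather than $U_1U_2^T$, which is why you need the extra step of passing to $\sigma^{-1}$; the paper's choice hands over the permutation in the desired direction immediately.
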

\begin{proof}
Observe $$(U_1 U_2^{T})^{T}\Diag v_1 (U_1 U_2^{T})=\Diag v_2,$$
$$(U_1 U_2^{T})^{T}\Diag \lambda(X) (U_1 U_2^{T})=\Diag \lambda(X).$$
The result follows by an application of Lemma~\ref{lem:sim_conj}.
\end{proof}

\subsection{Main results}
In this section, we consider partly-smooth sets, introduced in \cite{Lewis-active}.
This notion generalizes the idea of active manifolds of classical nonlinear programming
to an entirely representation-independent setting.
\smallskip

\begin{defn}[Partial Smoothness]
{\rm Consider a function $f\colon\E\to\overline{\R}$ and a set
$M\subset\E$ containing a point $\bar{x}$. Then $f$ is ${\bf
C}^p${\em -partly smooth} ($p=2,\ldots,\infty$) at $\bar{x}$
relative to $M$ if
\begin{itemize}
\item[{\rm (i)}] {\bf (Smoothness)} $M$ is a ${\bf C}^p$ manifold around $\bar{x}$ and $f$ restricted to $M$ is ${\bf C}^p$-smooth near $\bar{x}$,
\item[{\rm (ii)}] {\bf (Regularity)} $f$ is prox-regular at $\bar{x}$,
\item[{\rm (iii)}]{\bf (Sharpness)} the affine span of $\partial f$ is a translate of $N_M(x)$,
\item[{\rm (iv)}] {\bf (Continuity)} $\partial f$ restricted to $M$ is continuous at $\bar{x}$.
\end{itemize}
If the above properties hold, then we will refer to $M$ as the {\em partly smooth manifold} of $f$ at $\bar{x}$.
}
\end{defn}
\smallskip

\begin{rem}
{\rm Though the original definition of partial smoothness replaces
the prox-regularity condition by Clarke-regularity, we feel that the
prox-regularity is essential for the theory. In particular, without
it, partly-smooth manifolds are not even guaranteed to be locally
unique and the basic property of identifiability may fail
\cite[Section~7]{Hare}.}
\end{rem}
\smallskip

Some comments are in order. First the continuity property of
$\partial f$ is meant in the Painlev\'{e}-Kuratowski sense. See for
example \cite[Definition 5.4]{VA}. The exact details of this notion
will not be needed in our work, and hence we do not dwell on it
further. Geometrically, partly smooth manifolds have a
characteristic property in that the epigraph of $f$ looks
``valley-like'' along the graph of $f\big|_M$ . See Figure~\ref{fig:val} for an
illustration.

\begin{figure}[h]
  \centering\includegraphics[scale=0.5]{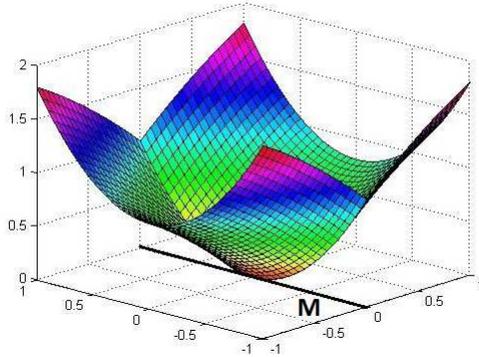}
  \caption{The partly smooth manifold $M$ for $f(x,y):=|x|(1-|x|)+y^2$.}\label{fig:val}
\end{figure}

It is reassuring to know that partly smooth manifolds are locally
unique. This is the content of the following theorem
\cite[Corollary~4.2]{Hare}.
\smallskip
\begin{thm}[Local uniqueness of partly smooth manifolds]\label{thm:uniqps}
Consider a function $f\colon\E\to\overline{\R}$ that is ${\bf C}^p$-partly smooth ($p\geq 2$) at $\bar{x}$ relative to two manifolds $M_1$ and $M_2$. Then there exists a neighborhood $U$ of $\bar{x}$ satisfying
$U\cap M_1=U\cap M_2$.
\end{thm}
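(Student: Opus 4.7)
My strategy is to deduce local uniqueness from the following \emph{identification} property that each manifold $M_i$ must satisfy as a consequence of partial smoothness: whenever $x_k\to\bar x$, $f(x_k)\to f(\bar x)$, and some $v_k\in\partial f(x_k)$ converges to a point $\bar v\in\ri\partial f(\bar x)$, we have $x_k\in M_i$ for all large $k$. Granting this, suppose toward a contradiction that no neighborhood $U$ of $\bar x$ satisfies $M_1\cap U=M_2\cap U$. By symmetry we may, after extracting a subsequence, find $x_k\in M_1$ with $x_k\to\bar x$ and $x_k\notin M_2$. Smoothness of $f|_{M_1}$ yields $f(x_k)\to f(\bar x)$; the sharpness condition forces $\aff\partial f(\bar x)$ to be a nontrivial affine translate of $N_{M_1}(\bar x)$, so $\ri\partial f(\bar x)$ is nonempty; and continuity of $\partial f$ along $M_1$ in the Painlev\'{e}-Kuratowski sense lets me choose $v_k\in\partial f(x_k)$ with $v_k\to \bar v$ for any prescribed $\bar v\in \ri\partial f(\bar x)$. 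Applying the identification property for $M_2$ then forces $x_k\in M_2$ eventually, contradicting $x_k\notin M_2$.

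The main work is the identification property for a single partly smooth manifold $M$. Assume, for contradiction, that $x_k\notin M$ along some sequence satisfying the hypotheses. Set $y_k:=P_M(x_k)$, which is well defined near $\bar x$ since $M$ is a ${\bf C}^p$ manifold, and note $u_k:=x_k-y_k\in N_M(y_k)\setminus\{0\}$. A local ${\bf C}^p$ extension $g$ of $f|_M$ supplies tangentially correct subgradients $\nabla g(y_k)\in\partial f(y_k)$. The prox-regularity of $f$ at $\bar x$ provides a uniform two-sided quadratic subgradient inequality that compares $v_k$ at $x_k$ with $\nabla g(y_k)$ at $y_k$. Rescaling this inequality by $\|u_k\|$ and passing to the limit produces a unit normal direction $w\in N_M(\bar x)$ and some $t>0$ with $\bar v+tw\notin\partial f(\bar x)$, even though $w$ lies in $\para\partial f(\bar x)$ by sharpness; this contradicts $\bar v\in\ri\partial f(\bar x)$.

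The hardest step is the rescaling-and-limit argument in paragraph two: extracting from the sequences $(x_k,v_k)$ and $(y_k,\nabla g(y_k))$ a single normal direction along which $\partial f(\bar x)$ is ``one-sided'' near $\bar v$ requires careful exploitation of prox-regularity (to provide quadratic control on the subgradients at scale $\|u_k\|$), of the ${\bf C}^p$-smoothness of $P_M$ (to separate tangential from normal components), and of the sharpness equality $\aff\partial f(\bar x)-\bar v=N_M(\bar x)$ (to interpret the extracted normal direction as lying inside $\aff \partial f(\bar x)$). Once the identification property is established, the uniqueness conclusion follows from the symmetric sequence argument of paragraph one.
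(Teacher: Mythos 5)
The paper itself offers no proof: Theorem~\ref{thm:uniqps} is simply cited from \cite[Corollary~4.2]{Hare}. That said, your first paragraph is the standard reduction and is sound: once one knows the identification property (for $M_2$, say), restricted smoothness on $M_1$ yields $f(x_k)\to f(\bar x)$, Painlev\'e--Kuratowski continuity of $\partial f|_{M_1}$ supplies $v_k\in\partial f(x_k)$ with $v_k\to\bar v$ for any prescribed $\bar v\in\ri\partial f(\bar x)$, and identifiability of $M_2$ then forces $x_k\in M_2$ eventually. In the present paper the identification property is in fact already packaged as Proposition~\ref{prop:eqv}, so the local-uniqueness statement follows at once from that proposition together with your paragraph-one argument; there is no need to reprove it.

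Your second paragraph, where you try to reprove identification, has a genuine gap. You assert that a ${\bf C}^p$ extension $g$ of $f|_M$ supplies subgradients $\nabla g(y_k)\in\partial f(y_k)$. This is false in general: an extension of $f|_M$ fixes only the tangential component of $\nabla g$ along $M$, while its normal component is completely arbitrary. Concretely, with $f(x,y)=|x|$, $M=\{x=0\}$, $\bar x=0$, the function $g(x,y)=2x$ is a ${\bf C}^\infty$ extension of $f|_M\equiv 0$, yet $\nabla g(0)=(2,0)\notin[-1,1]\times\{0\}=\partial f(0)$. What you should use instead is the continuity axiom (iv): fix a small $t>0$ so that $\bar v+tw\in\partial f(\bar x)$ (this uses $w\in N_M(\bar x)=\para\partial f(\bar x)$ and $\bar v\in\ri\partial f(\bar x)$), and by inner semicontinuity of $\partial f|_M$ choose $w_k\in\partial f(y_k)$ with $w_k\to\bar v+tw$. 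Adding the two prox-regularity inequalities for the pairs $(x_k,v_k)$ and $(y_k,w_k)$ and dividing by $\|u_k\|$ gives $\langle w_k-v_k,\,u_k/\|u_k\|\rangle\le\rho\|u_k\|$, and in the limit $t\|w\|^2\le 0$---the contradiction. Note also that your stated conclusion inverts the logic: that \emph{some} $t>0$ has $\bar v+tw\notin\partial f(\bar x)$ does not conflict with $\bar v\in\ri\partial f(\bar x)$ (take $t$ large); the contradiction must be produced for a $t>0$ fixed small in advance so that $\bar v+tw$ is known a priori to lie in $\partial f(\bar x)$.
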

\smallskip

Our goal in this section is to prove that partly smooth manifolds
satisfy the Transfer Principle. However, proving this directly is
rather difficult. This is in large part because the continuity of
the subdifferential mapping $\partial (f\circ\lambda)$ seems to be
intrinsically tied to continuity properties of the mapping
$$X\mapsto {\bf O}^n_X=\{U\in {\bf O}^n: X=U^T(\Diag \lambda(X))U\},$$
which are rather difficult to understand.

We however will side-step this problem entirely by instead focusing
on a property that is seemingly different from partial smoothness
--- {\em finite identification}. This notion is of significant
independent interest. It has been implicitly considered by a number
of authors in connection with the possibility to accelerate various
first-order numerical methods
\cite{Wright,Dunn87,Calamai-More87,Burke-More88,Burke90,Ferris91,Al-Khayyal-Kyparisis91,Flam92,
ident_bundle}, and has explicitly been studied in \cite{ident} for
its own sake.

\smallskip
\begin{defn}[Identifiable sets]
{\rm Consider a function $f\colon\E\to\overline{\R}$, a point $\bar{x}\in\R^n$,
and a subgradient $\bar{v}\in\partial f(\bar{x})$. A set $M\subset\dom f$ is
{\em identifiable at} $\bar{x}$ {\em for} $\bar{v}$ if for any sequences $(x_i,f(x_i),v_i)\to(\bar{x},f(\bar{x}),\bar{v})$,
with $v_i\in\partial f(x_i)$, the points $x_i$ must all lie in $M$ for all sufficiently large indices $i$.}
\end{defn}
\smallskip

\begin{rem}
{\rm
It is important to note that identifiable sets are not required to be smooth manifolds.
Indeed, as we will see shortly, identifiability is a more basic notion than partial smoothness.}
\end{rem}

The relationship between partial smoothness and finite
identification is easy to explain. Indeed, as the following theorem
shows, partial smoothness is in a sense just a ``uniform'' version
of identifiability \cite[Proposition 9.4]{ident}.

\smallskip
\begin{prop}[Partial smoothness and identifiability]\label{prop:eqv}
Consider a lsc function $f\colon\E\to\overline{\R}$ that is prox-regular
at a point $\bar{x}$. Let $M\subset\dom f$ be a ${\bf C}^p$ manifold ($p=2,\ldots,\infty$) containing $\bar{x}$, with the restriction $f\big|_M$ being ${\bf C}^p$-smooth near $\bar{x}$. Then the following are equivalent
\begin{enumerate}
\item $f$ is ${\bf C}^p$-partly smooth at $\bar{x}$ relative to $M$
\item $M$ is an identifiable set (relative to $f$) at $\bar{x}$ for every subgradient $\bar{v}\in\ri\partial f(\bar{x})$.
\end{enumerate}
\end{prop}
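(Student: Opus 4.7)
The equivalence reduces to trading the remaining two partial smoothness conditions --- sharpness and continuity of $\partial f|_M$ --- for the identifiability requirement at every $\bar{v}\in\ri\partial f(\bar{x})$, since the smoothness and regularity clauses are already in the hypotheses.

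\textbf{Forward direction $(1)\Rightarrow(2)$.} Fix $\bar{v}\in\ri\partial f(\bar{x})$ and argue by contradiction. Suppose a sequence $(x_i,f(x_i),v_i)\to(\bar{x},f(\bar{x}),\bar{v})$ with $v_i\in\partial f(x_i)$ has $x_i\notin M$ for all $i$. Since $M$ is a $\mathbf{C}^p$ manifold with $p\geq 2$ it is prox-regular, so $p_i:=P_M(x_i)$ is well-defined for large $i$; then $p_i\to\bar{x}$ and the unit vectors $n_i:=(x_i-p_i)/\|x_i-p_i\|$ lie in $N_M(p_i)$. Pass to a subsequence with $n_i\to n_\infty\in N_M(\bar{x})$, $\|n_\infty\|=1$. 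By the sharpness condition, $\aff\partial f(\bar{x})-\bar{v}$ is exactly the subspace $N_M(\bar{x})$, hence for small enough $\epsilon>0$ the vector $\bar{v}+\epsilon n_\infty$ belongs to $\partial f(\bar{x})$. The inner-semicontinuity half of the continuity axiom furnishes $\tilde{v}_i\in\partial f(p_i)$ with $\tilde{v}_i\to\bar{v}+\epsilon n_\infty$. Now apply the proximal subgradient inequality (directional prox-regularity of $f$ at $\bar{x}$ for $\bar{v}$ and for $\bar{v}+\epsilon n_\infty$) twice, with test point $p_i$ in the first and $x_i$ in the second, add, and divide by $\|x_i-p_i\|$ to obtain
\[
\langle\tilde{v}_i-v_i,\,n_i\rangle\leq\rho\|x_i-p_i\|.
\]
Passing to the limit gives $\epsilon\|n_\infty\|^{2}\leq 0$, a contradiction.

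\textbf{Backward direction $(2)\Rightarrow(1)$.} Let $\tilde{f}$ be a local smooth extension of $f|_M$. The inclusion $\partial f(\bar{x})\subset\nabla\tilde{f}(\bar{x})+N_M(\bar{x})$ (so also at the level of affine spans) is immediate from the Fr\'echet subgradient inequality, the equality $f=\tilde{f}$ on $M$, and the fact that $M$ is a manifold so $\hat{N}_M(\bar{x})=N_M(\bar{x})$ is a subspace. For the reverse inclusion, I would argue by contradiction: if some unit $n\in N_M(\bar{x})$ were orthogonal to $\aff\partial f(\bar{x})-\nabla\tilde{f}(\bar{x})$, one builds, using the single-valued Lipschitz proximal mapping of the prox-regular function $f$, a curve of off-manifold points $x_t$ along direction $n$ together with subgradients $v_t\in\partial f(x_t)$ that converge to a point of $\ri\partial f(\bar{x})$ --- directly contradicting identifiability at that relative-interior subgradient.

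Continuity is then argued in two halves. Outer semicontinuity of $\partial f|_M$ at $\bar{x}$ is a tautology from the definition of the limiting subdifferential. For inner semicontinuity, fix $\bar{u}\in\partial f(\bar{x})$ and a sequence $p_i\in M$ with $p_i\to\bar{x}$. By the sharpness we have just established, $\bar{u}=\nabla\tilde{f}(\bar{x})+n$ for some $n\in N_M(\bar{x})$; continuity of the normal bundle of the $\mathbf{C}^p$-manifold $M$ yields $n_i\in N_M(p_i)$ with $n_i\to n$. One then verifies that $\nabla\tilde{f}(p_i)+n_i\in\partial f(p_i)$ for $i$ large, again by combining prox-regularity with identifiability to rule out ``escape'' of subgradients off $M$. \textbf{Main obstacle.} The forward direction is essentially mechanical: two applications of the proximal subgradient inequality and a normal-cone continuity argument. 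The genuinely hard step is the reverse inclusion of sharpness in the backward direction, where one must show that identifiability at \emph{every} relative-interior subgradient forces the normal cone $N_M(\bar{x})$ to be exhausted by directions already seen in $\aff\partial f(\bar{x})$; this is the part that requires a delicate off-manifold construction via the proximal mapping and is where prox-regularity is essential (without it, identifiable manifolds need not even be locally unique).
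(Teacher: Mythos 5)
The paper does not prove this proposition; it cites it directly from \cite[Proposition~9.4]{ident}, so there is no in-paper argument to compare against. Evaluating your proposal on its own merits: the forward direction $(1)\Rightarrow(2)$ is correct and essentially the standard argument --- project the off-manifold points $x_i$ onto the prox-regular manifold $M$, use sharpness and inner-semicontinuity to manufacture a competing subgradient $\tilde v_i\in\partial f(p_i)$ near $\bar v+\epsilon n_\infty$, and cancel function values by adding the two prox-regularity inequalities. One should check in passing that $f(p_i)\to f(\bar x)$ (from ${\bf C}^p$-smoothness of $f\big|_M$) so the prox-regularity inequality actually applies at $p_i$, but this is routine.

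The backward direction has a genuine gap, and the construction you sketch for the reverse inclusion of sharpness does not appear to work as stated. You propose to build, via the single-valued Lipschitz proximal mapping of the prox-regular $f$, a curve of \emph{off-manifold} points $x_t$ whose subgradients $v_t$ converge to a point of $\ri\partial f(\bar x)$, then invoke identifiability for a contradiction. But the proximal mapping applied to points of the form $\bar x+r^{-1}(\bar v+tn)$ produces precisely the kind of sequence that identifiability governs: the associated subgradients converge to $\bar v\in\ri\partial f(\bar x)$ (with function values converging), and so the very hypothesis you want to contradict forces the resulting $x_t$ to lie \emph{on} $M$, not off it. The proximal mapping therefore cannot yield the off-manifold points your contradiction needs. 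A workable argument instead has to extract a contradiction from the fact that $x_t\in M$: for instance, differentiate the relation $v_t-\nabla\tilde f(x_t)\in N_M(x_t)$ along the Lipschitz curve $t\mapsto x_t$ in $M$, using $v_t=\bar v+tn+r(\bar x-x_t)$, to conclude that $n$ must already lie in $\para\partial f(\bar x)$. A similar concern applies to your inner-semicontinuity step: the phrase ``combining prox-regularity with identifiability to rule out escape of subgradients off $M$'' names the right ingredients but not the mechanism; the usual route is to parametrize $\gph\partial f$ near $(\bar x,\bar v)$ by the Lipschitz prox map, note via identifiability that this graph locally coincides with the graph of $\partial f$ restricted to $M$, and read off continuity. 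As written, the backward direction remains a plan rather than a proof, and the central construction in that plan is pointed in the wrong direction.
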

\smallskip

In light of the theorem above, our strategy for proving the Transfer Principle for partly smooth sets
is two-fold: first prove the analogous result for identifiable sets and then gain a better understanding
of the relationship between the  sets $\ri \partial f(\lambda(X))$ and $\ri \partial (f\circ\lambda)(X)$.

%First, we record the following simple observation for ease of reference. \begin{lem}[Identifiable sets and partly smooth manifolds under symmetry]\label{lem:ps_sym}
%Consider a lsc function $f\colon\R^n\to\overline{\R}$ that is locally symmetric at $\bar{x}$. Suppose that $M$ is an identifiable set (relative to $f$) at $\bar{x}$ for $\bar{v}\in\partial f(\bar{x})$. Then for any permutation $\sigma\in\fix(\bar{x})$, the set $\sigma M$ is identifiable (relative to $f$) at $\bar{x}$ for $\sigma \bar{v}\in\partial f(\bar{x})$. Analogous statement holds in the partly smooth case.
%\end{lem}

\smallskip
\begin{prop}[Spectral lifts of Identifiable sets]\label{prop:lift_id}
Consider a lsc function $f\colon\R^n\to\overline{\R}$ and a symmetric matrix $\overline{X}\in {\bf S}^n$. Suppose that $f$ is locally symmetric around $\bar{x}:=\lambda(\overline{X})$ and consider a subset $M\subset \R^n$ that is locally symmetric around $\bar{x}$.
Then $M$ is identifiable (relative to $f$) at $\bar{x}$ for $\bar{v}\in\partial f(\bar{x})$, if and only if $\lambda^{-1}(M)$ is identifiable (relative to $f\circ\lambda$) at $\overline{X}$ for $U^{T}(\Diag \bar{v}) U\in\partial (f\circ \lambda)(\overline{X})$, where $U\in {\bf O}^n_{\overline{X}}$ is arbitrary.
\end{prop}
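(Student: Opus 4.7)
The plan is to transport identifiability across $\lambda$ by passing every subgradient through the spectral subdifferential formula and correcting the resulting ambiguity with a symmetrization by a permutation in $\fix(\bar{x})$. The two implications are essentially mirror images of each other: in both directions we use compactness of ${\bf O}^n$, the identity $\partial(f\circ\lambda)(X)=\{W^T(\Diag w)W:w\in\partial f(\lambda(X)),\ W\in{\bf O}^n_X\}$, and Corollary~\ref{cor:conjug2} to translate between orthogonal conjugators and permutations fixing $\bar{x}$.

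For the forward direction, suppose $M$ is identifiable at $\bar{x}$ for $\bar{v}$ and take any sequence $(X_i,(f\circ\lambda)(X_i),V_i)\to(\overline{X},(f\circ\lambda)(\overline{X}),\overline{V})$ with $V_i\in\partial(f\circ\lambda)(X_i)$. By the spectral subdifferential formula, write $V_i=U_i^T(\Diag v_i)U_i$ with $U_i\in{\bf O}^n_{X_i}$ and $v_i\in\partial f(\lambda(X_i))$. Since $\|v_i\|=\|V_i\|_F$ stays bounded and ${\bf O}^n$ is compact, passing to a subsequence we may assume $U_i\to U^{*}$ and $v_i\to v^{*}$. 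Taking limits in $X_i=U_i^T(\Diag \lambda(X_i))U_i$ gives $U^{*}\in{\bf O}^n_{\overline{X}}$, and the identity $(U^{*})^T(\Diag v^{*})U^{*}=\overline{V}=U^T(\Diag\bar{v})U$ together with Corollary~\ref{cor:conjug2} produces a permutation $\sigma\in\fix(\bar{x})$ with $\sigma\bar{v}=v^{*}$. Set $\tilde{x}_i:=\sigma^{-1}\lambda(X_i)$ and $\tilde{v}_i:=\sigma^{-1}v_i$; by symmetry of the subdifferential, $\tilde{v}_i\in\partial f(\tilde{x}_i)$, and by local symmetry of $f$ we have $f(\tilde{x}_i)=f(\lambda(X_i))\to f(\bar{x})$, while $\tilde{x}_i\to\bar{x}$ and $\tilde{v}_i\to\bar{v}$. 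Identifiability of $M$ yields $\tilde{x}_i\in M$ eventually, and then $\lambda(X_i)=\sigma\tilde{x}_i\in\sigma M=M$ by local symmetry of $M$, so $X_i\in\lambda^{-1}(M)$ along the subsequence. A standard contradiction argument, applied to an arbitrary subsequence with $X_i\notin\lambda^{-1}(M)$, upgrades this to eventual membership of the original sequence.

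For the converse, take $(x_i,f(x_i),v_i)\to(\bar{x},f(\bar{x}),\bar{v})$ with $v_i\in\partial f(x_i)$ and, for $i$ large enough, choose $\sigma_i\in\fix(\bar{x})$ so that $\sigma_i x_i$ is nonincreasingly ordered. Define $X_i:=U^T(\Diag x_i)U$ and $V_i:=U^T(\Diag v_i)U$; then $\lambda(X_i)=\sigma_i x_i$, $X_i\to\overline{X}$, $V_i\to\overline{V}$, and $(f\circ\lambda)(X_i)=f(\sigma_i x_i)=f(x_i)\to f(\bar{x})$. Letting $P_{\sigma_i}$ denote the permutation matrix realizing $\sigma_i$ and setting $W:=P_{\sigma_i}U$, a direct computation shows $W\in{\bf O}^n_{X_i}$ and $V_i=W^T(\Diag\sigma_i v_i)W$. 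Since $\sigma_i v_i\in\sigma_i\partial f(x_i)=\partial f(\sigma_i x_i)=\partial f(\lambda(X_i))$, the spectral subdifferential formula certifies $V_i\in\partial(f\circ\lambda)(X_i)$. Identifiability of $\lambda^{-1}(M)$ at $\overline{X}$ for $\overline{V}$ forces $X_i\in\lambda^{-1}(M)$ eventually, so $\sigma_i x_i\in M$, and local symmetry of $M$ then gives $x_i=\sigma_i^{-1}(\sigma_i x_i)\in M$. The main obstacle is the forward direction: the orthogonal diagonalizers $U_i$ need not converge to $U$, so the permutation $\sigma$ produced by Corollary~\ref{cor:conjug2} really does intervene, and carrying this twist through the sequence via $\tilde{x}_i$ and $\tilde{v}_i$ is the crucial maneuver. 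Everything else is bookkeeping with local symmetry.
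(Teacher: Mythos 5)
Your proof is correct and follows essentially the same strategy as the paper's: decompose $V_i$ via the spectral subdifferential formula, exploit compactness of ${\bf O}^n$, and use Corollary~\ref{cor:conjug2} to trade the mismatch between the limiting diagonalizer and the given one for a permutation in $\fix(\bar{x})$. In the converse direction your bookkeeping is marginally tidier than the paper's --- by defining $X_i:=U^T(\Diag x_i)U$ rather than $U^T(\Diag\sigma_i x_i)U$ you get $X_i\to\overline{X}$ and $V_i\to\overline{V}$ directly, pushing the permutation $\sigma_i$ into the diagonalizer $W=P_{\sigma_i}U$ instead of having to re-conjugate the limit by $\overline{U}^T A_{\sigma^{-1}}\overline{U}$ --- but this is a cosmetic rearrangement of the same argument, not a different route.
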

\begin{proof} We first prove the forward implication. Fix a subgradient $$\overline{V}:=\overline{U}^{T}(\Diag \bar{v}) \overline{U}\in\partial (f\circ \lambda)(\overline{X}),$$ for an arbitrary transformation $\overline{U}\in {\bf O}^n_{\overline{X}}$. For convenience, let $F:=f\circ\lambda$ and consider a sequence $(X_i,F(X_i),V_i)\to(\overline{X},F(\overline{X}),\overline{V})$. Our goal is to show that for all large indices $i$, the inclusion $\lambda(X_i)\in M$ holds. To this end, there exist matrices $U_i\in {\bf O}^n_{X_i}$ and subgradients $v_i\in\partial f(\lambda(X_i))$ with
$$U_i^{T}(\Diag \lambda(X_i)) U_i=X_i \quad\textrm{ and }\quad U_i^{T}(\Diag v_i) U_i=V_i.$$
Restricting to a subsequence, we may assume that there exists a matrix $\widetilde{U}\in {\bf O}^n_{\overline{X}}$ satisfying $U_i\to\widetilde{U}$, and consequently there exists a subgradient $\tilde{v}\in\partial f(\lambda(\overline{X}))$ satisfying $v_i\to\tilde{v}$. Hence we obtain
$$\widetilde{U}^{T}(\Diag \lambda(\overline{X}))\widetilde{U}=\overline{X}\quad \textrm{and}\quad \widetilde{U}^{T}(\Diag \tilde{v})\widetilde{U}=\overline{V}=\overline{U}^{T}(\Diag \bar{v}) \overline{U}.$$

By Corollary~\ref{cor:conjug2}, there exists a permutation $\sigma\in\fix(\bar{x})$ with $\sigma\tilde{v}=\bar{v}$.
Observe $(\lambda(X_i),f(\lambda(X_i)),v_i)\to(\bar{x},f(\bar{x}),\tilde{v})$. Observe that the set $\sigma^{-1}M$ is identifiable (relative to $f$) at $\bar{x}$ for $\tilde{v}$. Consequently for all large indices $i$, the inclusion $\lambda(X_i)\in \sigma^{-1}M$ holds. Since $M$ is locally symmetric at $\bar{x}$, we deduce that all the points $\lambda(X_i)$ eventually lie in $M$.

To see the reverse implication, fix an orthogonal matrix ${\overline{U}\in\bf O}_{\overline{X}}^n$ and define $\overline{V}:=\overline{U}^{T}(\Diag \bar{v}) \overline{U}$. Consider a sequence $(x_i,f(x_i),v_i)\to(\bar{x},f(\bar{x}),\bar{v})$ with $v_i\in\partial f(x_i)$.
It is not difficult to see then that there exist permutations $\sigma_i\in\fix(\bar{x})$ satisfying $\sigma_i x_i\in\R_{\geq}$. Restricting to a subsequence, we may suppose that $\sigma_i$ are equal to a fixed $\sigma\in\fix(\bar{x})$. Define
$$X_i:=\overline{U}^{T}(\Diag \sigma x_i) \overline{U} \quad \textrm{and} \quad V_i:=\overline{U}^{T}(\Diag \sigma v_i)\overline{U}.$$ Letting $A_{\sigma^{-1}}\in{\bf O}^n$ denote the matrix representing the permutation $\sigma^{-1}$, we have
\begin{align*}
X_i&:=(\overline{U}^{T}A_{\sigma^{-1}}\overline{U})^{T}\big[\overline{U}^{T}(\Diag x_i) \overline{U}\big]\overline{U}^{T} A_{\sigma^{-1}}\overline{U} \quad
\textrm{and} \\
\quad V_i&:= (\overline{U}^{T}A_{\sigma^{-1}}\overline{U})^{T}[\overline{U}^{T}(\Diag  v_i)\overline{U}] \overline{U}^{T}A_{\sigma^{-1}}\overline{U}.
\end{align*}
We deduce $X_i\to (\overline{U}^{T}A_{\sigma^{-1}}\overline{U})^{T}\overline{X}(\overline{U}^{T} A_{\sigma^{-1}}\overline{U})$ and $V_i\to (\overline{U}^{T}A_{\sigma^{-1}}\overline{U})^{T}\overline{V}(\overline{U}^{T} A_{\sigma^{-1}}\overline{U})$.
On the other hand, observe $\overline{X}=(\overline{U}^{T}A_{\sigma^{-1}}\overline{U})^{T}\overline{X}(\overline{U}^{T} A_{\sigma^{-1}}\overline{U})$. Since $\lambda^{-1}(M)$ is identifiable (relative to $F$) at $\overline{X}$ for $(\overline{U}^{T}A_{\sigma^{-1}}\overline{U})^{T}\overline{V}(\overline{U}^{T} A_{\sigma^{-1}}\overline{U})$, we deduce that the matrices $X_i$ lie in $\lambda^{-1}(M)$ for all sufficiently large indices $i$. Since $M$ is locally symmetric around $\bar{x}$, the proof is complete.
\end{proof}
\smallskip

%\begin{prop}
%Consider a lsc function $f\colon\R^n\to\overline{\R}$ and a ${\bf C}^2$ manifold $M\subset\dom f$ containing a point $\bar{x}$ with the restriction $f\big|_M$ being ${\bf C}^2$-smooth near $\bar{x}$. Then $M$ is identifiable at $\bar{x}$ for $\bar{v}\in\hat{\partial} f(\bar{x})$ if and only if
%\begin{enumerate}
%\item
%\item {\bf (Regularity)} $f$ is prox-regular at $\bar{x}$ for $\bar{v}$,
%\item the affine span of $\hat{\partial} f$ is a translate of $N_M(x)$,
%\item {\bf (Sub-continuity)}
%\end{enumerate}
%\end{prop}

Using the results of Section~\ref{sec:lift_man}, we can now describe in a natural way the affine span, relative interior, and relative boundary of the Fr\'{e}chet subdifferential. We begin with a lemma.
\smallskip
\begin{lem}[Affine generation]\label{lem:aff_gen}
Consider a matrix $X\in {\bf S}^n$ and suppose that the point $x:=\lambda(X)$ lies in an affine subspace $\mathcal{V}\subset\R^n$ that is invariant under the action of $\fix (x)$. Then the set
$$\{U^T (\Diag v) U: v\in \mathcal{V} \textrm{ and } U\in {\bf O}^n_X\},$$
is an affine subspace of ${\bf S}^n$.
\end{lem}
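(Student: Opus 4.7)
My approach is to reduce to a block-diagonal picture and read off the affine structure explicitly. After an orthogonal change of basis we may assume $X=\Diag x$; the stabilizer ${\bf O}^n_X$ is then the block-diagonal group $\prod_{j=1}^{\rho}{\bf O}^{|I_j|}$, where $I_1,\ldots,I_\rho$ is the partition $\mathcal{P}_x$ of $\{1,\ldots,n\}$ into equal-eigenvalue groups, and the subgroup $\fix(x)=\prod_j\Sigma^{|I_j|}$ embeds in ${\bf O}^n_X$ as within-block permutation matrices. For any $v\in\R^n$ and $U=\mathrm{diag}(U_1,\ldots,U_\rho)\in{\bf O}^n_X$, the matrix $U^{T}(\Diag v)U$ is block-diagonal with $j$-th block $U_j^{T}\Diag(v^{(j)})U_j$, where $v^{(j)}:=(v_i)_{i\in I_j}$. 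Thus every element of $S:=\{U^{T}(\Diag v)U: v\in \mathcal{V},\,U\in{\bf O}^n_X\}$ is block-diagonal, and $Y=\mathrm{diag}(Y_1,\ldots,Y_\rho)$ lies in $S$ iff some $v\in\mathcal{V}$ has $v^{(j)}$ equal to a rearrangement of $\lambda(Y_j)$ for each $j$. Since $\mathcal{V}$ is $\fix(x)$-invariant and therefore absorbs all within-block rearrangements, this simplifies to the clean condition that the concatenation $(\lambda(Y_1),\ldots,\lambda(Y_\rho))$ lies in $\mathcal{V}$.

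I would next analyze $\mathcal{V}$ via the isotypic decomposition of $\R^n$ under $\fix(x)$. Write
$$\R^n = V_{\mathrm{triv}} \oplus \bigoplus_{j=1}^{\rho} V_{\mathrm{std},j},$$
where $V_{\mathrm{triv}}$ is the $\rho$-dimensional subspace of vectors constant on each block, and $V_{\mathrm{std},j}:=\{v\in\R^n: v_i=0 \textrm{ for } i\notin I_j,\ \sum_{i\in I_j} v_i=0\}$. The subspaces $V_{\mathrm{std},j}$ are pairwise non-isomorphic irreducible $\fix(x)$-representations (the $k$-th factor $\Sigma^{|I_k|}$ acts trivially on $V_{\mathrm{std},j}$ for $k\neq j$, and acts as the standard irreducible rep on $V_{\mathrm{std},k}$), so by Schur's lemma any invariant linear subspace has the form $W\oplus\bigoplus_{j\in J}V_{\mathrm{std},j}$ for some $W\subseteq V_{\mathrm{triv}}$ and some $J\subseteq\{1,\ldots,\rho\}$. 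Averaging any point of $\mathcal{V}$ over $\fix(x)$ produces a fixed point $v_0\in\mathcal{V}\cap V_{\mathrm{triv}}$, so $\mathcal{V} = v_0 + \bigl(W\oplus\bigoplus_{j\in J}V_{\mathrm{std},j}\bigr)$.

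The condition $(\lambda(Y_1),\ldots,\lambda(Y_\rho))\in\mathcal{V}$ then unpacks into two affine conditions on $Y$. For each $j\notin J$ the corresponding block $\lambda(Y_j)$ must be constant-valued, equivalently $Y_j$ is a scalar multiple of the identity. For all $j$, using $\sum_{i\in I_j}\lambda_i(Y_j)=\tr(Y_j)$, the vector of block-averages $\bigl(\tr(Y_1)/|I_1|,\ldots,\tr(Y_\rho)/|I_\rho|\bigr)\in\R^\rho$ must lie in a prescribed affine subspace $\mathcal{C}\subseteq\R^\rho$, namely the image of $v_0+W$ under the natural isomorphism $V_{\mathrm{triv}}\cong\R^\rho$. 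Together with the linear condition that $Y$ be block-diagonal, these are all affine conditions on $Y$, so $S$ is an affine subspace of ${\bf S}^n$.

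The main obstacle will be the representation-theoretic bookkeeping: verifying that each $V_{\mathrm{std},j}$ (for $|I_j|\geq 2$) is irreducible and that distinct $V_{\mathrm{std},j}$'s are non-isomorphic as $\fix(x)$-representations, which together force the claimed block decomposition of invariant subspaces. Once that structural description of $\mathcal{V}$ is in hand, the identity $\tr(Y_j)=\sum_{i\in I_j}\lambda_i(Y_j)$ makes the final translation into affine conditions on $Y$ immediate.
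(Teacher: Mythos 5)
Your proof is correct, but it takes a genuinely different route from the paper's. The paper's argument is indirect and leverages the transfer-principle machinery already built up: define $L:=(\para \mathcal{V})^{\perp}$, set $w$ to be the unique point of $L\cap\mathcal{V}$, and introduce the locally symmetric function $g(y)=\langle w,y\rangle+\delta_{x+L}(y)$, which satisfies $\hat\partial g(x)=\mathcal{V}$. By the smoothness lift (Theorem~\ref{thm:sm_lift}) and the manifold lift (Theorem~\ref{thm:lift_man}), $Y\mapsto\langle w,\lambda(Y)\rangle$ is ${\bf C}^{\infty}$ near $X$ and $\lambda^{-1}(x+L)$ is a ${\bf C}^{\infty}$ manifold near $X$, so $\hat\partial(g\circ\lambda)(X)$ is an affine subspace; the subdifferential formula then identifies it with the set in question. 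Your argument is a direct, self-contained linear-algebraic computation: reduce to $X=\Diag x$, exploit the block-diagonal form of ${\bf O}^n_X$, classify the $\fix(x)$-invariant affine subspace $\mathcal{V}$ via the isotypic decomposition of $\R^n$ under $\prod_j\Sigma^{|I_j|}$ (trivial component plus the pairwise non-isomorphic standard reps of each $\Sigma^{|I_j|}$), and read off that membership in $S$ amounts to: $Y$ block diagonal, $Y_j$ a scalar matrix for $j\notin J$, and the vector of normalized block traces lying in an affine subspace of $\R^\rho$ --- all affine conditions. Your approach buys explicitness (a concrete description of the resulting affine subspace, useful e.g.\ for dimension counts) and avoids invoking the transfer theorems, at the cost of the representation-theoretic bookkeeping; the paper's approach is shorter given its machinery and avoids any case analysis. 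One small point of precision: the reduction from ``some invariant subspace'' to ``$W\oplus\bigoplus_{j\in J}V_{\mathrm{std},j}$'' uses not merely Schur's lemma but the full isotypic decomposition theorem (equivalently Maschke plus Schur), together with the observation that the standard representations of $\Sigma^{|I_j|}$ are absolutely irreducible over $\R$; you should also flag that $V_{\mathrm{std},j}=\{0\}$ when $|I_j|=1$ so those indices can be dropped.
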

\begin{proof}
Define the set $L:=(\para \mathcal{V})^{\perp}$.
Observe that the set $L\cap \mathcal{V}$ consists of a single vector; call this vector $w$. Since both $L$ and $\mathcal{V}$ are invariant under the action of $\fix(x)$, we deduce $\sigma w=w$ for all $\sigma\in \fix(x)$.

Now define a function $g\colon\R^n\to\overline{\R}$ by declaring $$g(y)=\langle w,y\rangle+\delta_{x+L}(y),$$
and note that the equation $$\hat{\partial} g(x):= w+N_{x+L}(x)=\mathcal{V}, \quad\textrm{ holds}.$$
Observe that for any permutation $\sigma\in\fix(x)$, we have $$g(\sigma y)=\langle w,\sigma y\rangle +\delta_{x+L}(\sigma y)=\langle \sigma^{-1}w, y\rangle +\delta_{x+\sigma^{-1}L}(y)=g(y).$$ Consequently $g$ is locally symmetric at $x$. Observe
$$(g\circ\lambda)(Y)=\langle w,\lambda(Y)\rangle+\delta_{\lambda^{-1}(x+L)}{Y}.$$
It is immediate from Theorems~\ref{thm:sm_lift} and \ref{thm:lift_man}, that the function $Y\mapsto \langle w,\lambda(Y)\rangle$ is ${\bf C}^{\infty}$-smooth around $X$ and that $\lambda^{-1}(x+L)$ is a ${\bf C}^{\infty}$ manifold around $X$. Consequently $\hat{\partial} (g\circ\lambda)(X)$ is an affine subspace of ${\bf S}^n$.
On the other hand, we have
$$\hat{\partial} (g\circ \lambda)(X)=\{U^T (\Diag v) U: v\in \mathcal{V} \textrm{ and } U\in {\bf O}^n_X\},$$
thereby completing the proof.
\end{proof}

\smallskip

\begin{prop}[Affine span of the spectral Fr\'{e}chet subdifferential]\label{prop:ri_sub}
Consider a function $f\colon\R^n\to\overline{\R}$ and a matrix $X\in{\bf S}^n$. Suppose that $f$ is locally symmetric at $\lambda(X)$. Then we have
\begin{align}
\aff \hat{\partial} (f\circ\lambda) (X)&=\{U^T (\Diag v) U: v\in \aff\hat{\partial} f(\lambda(X)) \textrm{ and } U\in {\bf O}^n_X\},\label{eqn:aff}\\
\rb \hat{\partial} (f\circ\lambda) (X)&=\{U^T (\Diag v) U: v\in \rb\hat{\partial} f(\lambda(X)) \textrm{ and } U\in {\bf O}^n_X\}.\label{eqn:rb}\\
\ri \hat{\partial} (f\circ\lambda) (X)&=\{U^T (\Diag v) U: v\in \ri\hat{\partial} f(\lambda(X)) \textrm{ and } U\in {\bf O}^n_X\}.\label{eqn:ri}
\end{align}
\end{prop}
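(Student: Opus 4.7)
My starting point is the Fr\'echet analogue of the spectral subdifferential formula,
$$\hat{\partial}(f\circ\lambda)(X)\;=\;\{U^T(\Diag v) U : v\in\hat{\partial} f(\bar x),\; U\in{\bf O}^n_X\},$$
which I denote by $A$; this identity is already used implicitly in the proof of Lemma~\ref{lem:aff_gen} and is established by an argument parallel to the limiting version quoted earlier from \cite{lag}. Set $\mathcal V:=\aff\hat{\partial} f(\bar x)$, which is $\fix(\bar x)$-invariant by Remark~\ref{rem:sym_sub}, and write $B:=\{U^T(\Diag v) U: v\in\mathcal V,\,U\in{\bf O}^n_X\}$ and $A_0:=\{U^T(\Diag v) U: v\in\ri\hat{\partial} f(\bar x),\,U\in{\bf O}^n_X\}$.

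For (\ref{eqn:aff}), Lemma~\ref{lem:aff_gen} applied to $\mathcal V$ shows that $B$ is an affine subspace of ${\bf S}^n$; the inclusion $A\subset B$ forces $\aff A\subset B$, while conversely any $v\in\mathcal V$ is an affine combination $\sum_i\alpha_i v_i$ of points $v_i\in\hat\partial f(\bar x)$, so $U^T(\Diag v) U=\sum_i \alpha_i\, U^T(\Diag v_i) U\in\aff A$, yielding the reverse inclusion.

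For (\ref{eqn:ri}) I verify $\ri A=A_0$ by two separate inclusions. For $\ri A\subset A_0$, pick $V_0=U_0^T(\Diag v_0) U_0\in\ri A$ and an arbitrary $u\in\hat{\partial} f(\bar x)$. Setting $W:=U_0^T(\Diag u) U_0\in A$, the relative-interior criterion produces $t>0$ with $V_0+t(V_0-W)\in A$. Writing this element simultaneously as $U_0^T(\Diag(v_0+t(v_0-u))) U_0$ and as $\tilde U^T(\Diag\tilde v)\tilde U$ with $\tilde v\in\hat{\partial} f(\bar x)$, Corollary~\ref{cor:conjug2} supplies $\sigma\in\fix(\bar x)$ with $v_0+t(v_0-u)=\sigma^{-1}\tilde v\in\hat{\partial} f(\bar x)$ (invoking the $\fix(\bar x)$-invariance from Remark~\ref{rem:sym_sub}); since $u$ was arbitrary, $v_0\in\ri\hat{\partial} f(\bar x)$. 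The reverse inclusion $A_0\subset\ri A$ is the main obstacle. Given $V_0=U_0^T(\Diag v_0) U_0\in A_0$, I take a sequence $W_k\to V_0$ with $W_k\in B$, write $W_k=U_k^T(\Diag v_k) U_k$, and extract via compactness of ${\bf O}^n_X$ a subsequence with $U_k\to\tilde U\in{\bf O}^n_X$. Passing to the limit in $\Diag v_k = U_k W_k U_k^T$ gives $\Diag v_k\to P(\Diag v_0) P^T$, where $P:=\tilde U U_0^T\in{\bf O}^n_X$. The key structural observation is that $P$ commutes with $\Diag\bar x$ and is therefore block-diagonal with respect to the partition of $\{1,\ldots,n\}$ into classes of repeated coordinates of $\bar x$; combined with the diagonality of the limit, this forces $P(\Diag v_0) P^T = \Diag(\pi v_0)$ for some $\pi\in\fix(\bar x)$. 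Hence $v_k\to\pi v_0\in\ri\hat{\partial} f(\bar x)$ (by invariance), and since $\ri\hat{\partial} f(\bar x)$ is open inside $\mathcal V$, we conclude $v_k\in\hat{\partial} f(\bar x)$ for large $k$, so $W_k\in A$ eventually. This shows $A_0$ is open in $B$, and as $A_0\subset A$, we obtain $A_0\subset\ri A$.

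For (\ref{eqn:rb}), continuity of $(v, U)\mapsto U^T(\Diag v) U$ together with compactness of ${\bf O}^n_X$ yields $\cl A=\{U^T(\Diag v) U: v\in\cl\hat{\partial} f(\bar x),\,U\in{\bf O}^n_X\}$; an argument mirroring the first inclusion of (\ref{eqn:ri}) (again based on Corollary~\ref{cor:conjug2} and the $\fix(\bar x)$-invariance of $\ri\hat{\partial} f(\bar x)$) then identifies $\rb A=\cl A\setminus\ri A$ with the right-hand side of (\ref{eqn:rb}).
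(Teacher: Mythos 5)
Your proof is correct and relies on the same essential ingredients as the paper's --- Lemma~\ref{lem:aff_gen} for the affine span, the compactness of ${\bf O}^n_X$, Corollary~\ref{cor:conjug2}, and the $\fix(\bar{x})$-invariance of Remark~\ref{rem:sym_sub} --- but you reorganize the derivation: you prove (\ref{eqn:ri}) directly (both inclusions) and then derive (\ref{eqn:rb}) from it, whereas the paper concentrates the hard sequence/compactness work in (\ref{eqn:rb}) and then obtains (\ref{eqn:ri}) quickly by exclusion. Your argument for the inclusion $A_0\subset\ri A$, which you rightly flag as the main obstacle, is a clean direct route: pass to a convergent subsequence of the orthogonal frames, observe that $P=\tilde U U_0^T$ stabilizes $\Diag\bar x$ and is therefore block-diagonal relative to the level sets of $\bar x$, and conclude the diagonal limit is $\Diag(\pi v_0)$ with $\pi\in\fix(\bar x)$. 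This is exactly the content of Lemma~\ref{lem:sim_conj}/Corollary~\ref{cor:conjug2}, re-derived in place; citing the corollary would shorten the argument but making the block structure explicit does clarify \emph{why} the conjugation can only scramble coordinates within an eigenvalue block. The paper's route, by contrast, spends the analogous sequence argument establishing (\ref{eqn:rb}) and then treats both inclusions of (\ref{eqn:ri}) as quick consequences. Both are sound, and neither buys substantially more generality than the other. Two small slips to fix in your write-up: the matrix $P=\tilde U U_0^T$ is \emph{not} in ${\bf O}^n_X$ (that set consists of frames diagonalizing $X$, not of matrices commuting with $\Diag\bar x$); what you actually establish and use is $P^T(\Diag\bar x)P=\Diag\bar x$, i.e.\ $P$ lies in the stabilizer of $\Diag\bar x$ inside ${\bf O}^n$. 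Also, your argument produces $W_k\in A$ eventually only along the extracted subsequence; since you are arguing by contradiction starting from a supposed sequence in $B\setminus A$, this suffices, but that logical step should be stated rather than left implicit.
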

\begin{proof}
Throughout the proof, let $x:=\lambda(X)$. We prove the formulas in the order that they are stated. To this end, observe that the inclusion $\supset$ in (\ref{eqn:aff}) is immediate.
Furthermore, the inclusion
$$\hat{\partial} (f\circ\lambda) (X)\subset\{U^T (\Diag v) U: v\in \aff\hat{\partial} f(\lambda(X)) \textrm{ and } U\in {\bf O}^n_X\}.$$
clearly holds. Hence to establish the reverse inclusion in
(\ref{eqn:aff}), it is sufficient to show that the set $$\{U^T
(\Diag v) U: v\in \aff\hat{\partial} f(\lambda(X)) \textrm{ and }
U\in {\bf O}^n_X\},$$ is an affine subspace; but this is immediate
from Remark~\ref{rem:sym_sub} and Lemma~\ref{lem:aff_gen}. Hence
(\ref{eqn:aff}) holds.

\noindent We now prove (\ref{eqn:rb}). Consider a matrix $U^T(\Diag
v) U\in \rb \hat{\partial}(f\circ \lambda)(X)$ with $U\in {\bf
O}_X^n$ and $v\in \hat{\partial} f(\lambda(X))$. Our goal is to show
the stronger inclusion $v\in \rb \hat{\partial} f(x)$. Observe from
(\ref{eqn:aff}), there exists a sequence $U^{T}_i (\Diag v_i) U_i\to
U^T(\Diag v) U$ with $U_i\in {\bf O}_X^n$, $v_i\in
\aff\hat{\partial} f(x)$, and $v_i\notin \hat{\partial} f(x)$.
Restricting to a subsequence, we may assume that there exists a
matrix $\widetilde{U}\in {\bf O}_X^n$ with $U_i\to \widetilde{U}$
and a vector $\tilde{v}\in \aff\hat{\partial} f(x)$ with $v_i\to
\tilde{v}$. Hence the equation
$$\widetilde{U}^T(\Diag \tilde{v}) \widetilde{U}=U^T(\Diag v) U,\quad \textrm{holds}.$$
Consequently, by Corollary~\ref{cor:conjug2}, there exists a permutation $\sigma\in \fix(x)$ satisfying $\sigma \tilde{v}=v$. Since $\hat{\partial} f(x)$ is invariant under the action of $\fix(x)$, it follows that $\tilde{v}$ lies in $\rb \hat{\partial} f(x)$, and consequently from Remark~\ref{rem:sym_sub} we deduce $v\in\rb \hat{\partial} f(x)$. This establishes the inclusion $\subset$ of (\ref{eqn:rb}). To see the reverse inclusion, consider a sequence $v_i\in\aff\hat{\partial} f(x)$ converging to $v\in\hat{\partial} f(x)$ with $v_i\notin\hat{\partial} f(x)$ for each index $i$. Fix an arbitrary matrix $U\in {\bf O}_X^n$ and observe that the matrices $U^{T}(\Diag v_i) U$ lie in $\aff \hat{\partial} (f\circ\lambda)(x)$ and converge to $U^{T}(\Diag v) U$. We now claim that the matrices $U^{T}(\Diag v_i) U$  all lie outside of $\hat{\partial} (f\circ\lambda)(x)$. Indeed suppose this is not the case. Then there exist matrices $\widetilde{U}_i\in {\bf O}^n_{X}$ and subgradients $v_i\in\hat{\partial} f(x)$ satisfying
$$U^{T}(\Diag v_i) U= \widetilde{U}_i^{T}(\Diag \tilde{v}_i) \widetilde{U}_i.$$ An application of Corollary~\ref{cor:conjug2} and Remark~\ref{rem:sym_sub} then yields a contradiction. Therefore the inclusion $U^{T}(\Diag v) U\in\rb \hat{\partial} (f\circ \lambda)(X)$ holds, and the validity of (\ref{eqn:rb}) follows.

Finally, we aim to prove (\ref{eqn:ri}). Observe that the inclusion
$\subset$ of  $(\ref{eqn:ri})$ is immediate from equation
$(\ref{eqn:rb})$. To see the reverse inclusion, consider a matrix
$U^{T}(\Diag v)U$, for some $U\in {\bf O}^n_X$ and $v\in\ri
\hat{\partial} f(x)$. Again, an easy application of
Corollary~\ref{cor:conjug2} and Remark~\ref{rem:sym_sub} yields the
inclusion $U^{T}(\Diag v)U\in\ri \hat{\partial} (f\circ\lambda)(X)$.
We conclude that (\ref{eqn:ri}) holds.
\end{proof}
\smallskip

%\begin{prop}
%Consider a function $f\colon\R^n\to\overline{\R}$ that is locally symmetric at $\bar{x}$ and let $M$ be a locally minimal identifiable set at $\bar{x}$. Then $M$ is locally symmetric at $\bar{x}$.
%\end{prop}
%{\pf
%Fix a permutation $\sigma\in\fix(\bar{x})$ and observe that $\sigma M$ is a locally minimal identifiable set relative to $f\circ\sigma$. %Consequently $\sigma M$ coincides with $M$ near $\bar{x}$.
%}\qed

\begin{lem}[Symmetry of partly smooth manifolds]\label{lem:ps_man_sym}
Consider a lsc function $f\colon\R^n\to\overline{\R}$ that is locally symmetric at $\bar{x}$. Suppose that $f$ is ${\bf C}^p$-partly smooth at $\bar{x}$ relative to $M$. Then $M$ is locally symmetric around $\bar{x}$.
\end{lem}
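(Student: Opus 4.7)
The plan is to invoke the local uniqueness of partly smooth manifolds (Theorem~\ref{thm:uniqps}) after verifying that for each permutation $\sigma\in\fix(\bar{x})$, the translated set $\sigma M$ is itself a partly smooth manifold of $f$ at $\bar{x}$. Since $\sigma\bar{x}=\bar{x}$, Theorem~\ref{thm:uniqps} will then force $\sigma M$ to agree with $M$ on a neighborhood of $\bar{x}$, which is exactly the definition of local symmetry of $M$ at $\bar{x}$.

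Fix therefore an arbitrary $\sigma\in\fix(\bar{x})$. I would check the four defining conditions of partial smoothness for the pair $(f,\sigma M)$ at $\bar{x}$ one at a time, each time using local symmetry of $f$ together with the fact that $\sigma$ is a linear isometry fixing $\bar{x}$.

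For the smoothness axiom, $\sigma M$ is a $\mathbf{C}^p$ manifold around $\bar{x}$ as the image of the manifold $M$ under the linear isomorphism $\sigma$; moreover, for $y\in\sigma M$ close to $\bar{x}$, writing $y=\sigma x$ with $x\in M$ close to $\bar{x}$ and using local symmetry gives $f(y)=f(\sigma x)=f(x)$, so $f\big|_{\sigma M}=f\big|_{M}\circ\sigma^{-1}$ near $\bar{x}$ is $\mathbf{C}^p$-smooth. Prox-regularity of $f$ at $\bar{x}$ is intrinsic to $f$ and hence carries over. For sharpness, the subdifferential lemma under symmetry gives $\partial f(\bar{x})=\sigma\,\partial f(\bar{x})$, so $\aff\partial f(\bar{x})$ is $\sigma$-invariant; applying $\sigma$ to the given affine translation $\aff\partial f(\bar{x})=v+N_M(\bar{x})$ produces $\aff\partial f(\bar{x})=\sigma v+\sigma N_M(\bar{x})=\sigma v+N_{\sigma M}(\bar{x})$, as required. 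Finally, for continuity of the subdifferential map restricted to $\sigma M$, any sequence $y_k\to\bar{x}$ in $\sigma M$ is of the form $y_k=\sigma x_k$ with $x_k\to\bar{x}$ in $M$; the same symmetry lemma yields $\partial f(y_k)=\sigma\,\partial f(x_k)$, and continuity of $\partial f\big|_M$ at $\bar{x}$ combined with continuity of $\sigma$ gives convergence to $\sigma\,\partial f(\bar{x})=\partial f(\bar{x})$ in the Painlev\'e--Kuratowski sense.

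With all four axioms verified, $f$ is $\mathbf{C}^p$-partly smooth at $\bar{x}$ relative to both $M$ and $\sigma M$. Theorem~\ref{thm:uniqps} then produces a neighborhood $U$ of $\bar{x}$ with $U\cap M=U\cap \sigma M$. Since $\sigma\in\fix(\bar{x})$ was arbitrary, this is precisely the local symmetry of $M$ around $\bar{x}$. The only delicate point in the argument is the sharpness axiom, where one must translate the affine-span condition correctly through the action of $\sigma$; everything else is a straightforward transportation along the isometry $\sigma$ together with the symmetry of $f$ and of $\partial f$.
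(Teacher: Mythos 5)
Your argument is correct and follows exactly the paper's own strategy: observe that $f$ is ${\bf C}^p$-partly smooth at $\bar{x}$ relative to $\sigma M$ for each $\sigma\in\fix(\bar{x})$, then invoke the local uniqueness of partly smooth manifolds (Theorem~\ref{thm:uniqps}) to conclude $M=\sigma M$ near $\bar{x}$. The only difference is that the paper asserts the partial smoothness of $f$ relative to $\sigma M$ without detail, whereas you supply the (correct) verification of all four axioms.
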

\begin{proof}
Consider a permutation $\sigma\in\fix(\bar{x})$. Then the function $f$ is partly smooth at $\bar{x}$ relative to $\sigma M$. On the other hand, partly smooth manifolds are locally unique Theorem~\ref{thm:uniqps}. Consequently we deduce equality $M=\sigma M$ locally around $\bar{x}$. The claim follows.
\end{proof}
\smallskip

The main result of this section is now immediate.
\smallskip
\begin{thm}[Lifts of ${\bf C}^{\infty}$-partly smooth functions]\label{thm:lift_pman}
Consider a lsc function $f\colon\R^n\to\overline{\R}$ and a matrix $\overline{X}\in{\bf S}^n$. Suppose that $f$ is locally symmetric around $\bar{x}:={\lambda(\overline{X})}$. Then $f$ is ${\bf C}^{\infty}$-partly smooth at $\bar{x}$ relative to $M$ if and only if $f\circ\lambda$ is ${\bf C}^{\infty}$-partly smooth at $\overline{X}$ relative to $\lambda^{-1}(M)$.
\end{thm}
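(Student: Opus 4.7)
My strategy is to bypass the continuity clause of partial smoothness entirely by routing the proof through identifiability. Proposition~\ref{prop:eqv} reduces the question to three ingredients, each of which already has a published transfer result: (a) the underlying manifold and the smoothness of $f$ along it; (b) prox-regularity; and (c) identifiability at every subgradient in $\ri\partial f(\bar{x})$ (respectively $\ri\partial(f\circ\lambda)(\overline{X})$). Thus the proof assembles pieces rather than computing anything new; in particular it avoids any direct study of how the stabilizer ${\bf O}^n_X$ varies with $X$, which the authors flag as the main difficulty of a direct approach.

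For the forward direction, I assume $f$ is $\mathbf{C}^{\infty}$-partly smooth at $\bar{x}$ relative to $M$. Lemma~\ref{lem:ps_man_sym} first provides local symmetry of $M$ around $\bar{x}$. Ingredient (a) then transfers via Theorem~\ref{thm:lift_man} (the manifold structure lifts to $\lambda^{-1}(M)$) and Corollary~\ref{cor:rest_smooth} (smoothness of $f|_M$ lifts to $(f\circ\lambda)|_{\lambda^{-1}(M)}$); ingredient (b) is Theorem~\ref{thm:prox_lift}. For ingredient (c), pick any $\overline{V}\in\ri\partial(f\circ\lambda)(\overline{X})$; Proposition~\ref{prop:ri_sub} writes $\overline{V}=U^T(\Diag\bar{v})U$ for some $U\in{\bf O}^n_{\overline{X}}$ and some $\bar{v}\in\ri\partial f(\bar{x})$; Proposition~\ref{prop:eqv} applied to $f$ yields identifiability of $M$ at $\bar{x}$ for $\bar{v}$; and Proposition~\ref{prop:lift_id} transfers this to identifiability of $\lambda^{-1}(M)$ at $\overline{X}$ for $\overline{V}$. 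A second application of Proposition~\ref{prop:eqv}, this time in the reverse direction, concludes the forward implication.

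The backward direction follows by running each of these transfer results in its other implication. Two small obstacles must be dealt with along the way, and they are really where the proof needs care. First, $M$ is not a priori locally symmetric, yet Theorem~\ref{thm:lift_man}, Corollary~\ref{cor:rest_smooth} and Proposition~\ref{prop:lift_id} all demand it; I would dispose of this by replacing $M$ near $\bar{x}$ with its $\fix(\bar{x})$-symmetrization, which leaves $\lambda^{-1}(M)$ unchanged near $\overline{X}$ since only $M\cap\R^n_{\geq}$ contributes to the preimage. Second, Proposition~\ref{prop:ri_sub} is stated for the Fr\'echet subdifferential $\hat{\partial}$, whereas partial smoothness and identifiability are formulated with the limiting subdifferential $\partial$; I would observe that prox-regularity of $f$ at $\bar{x}$ (equivalently, of $f\circ\lambda$ at $\overline{X}$, by Theorem~\ref{thm:prox_lift}) forces $\hat{\partial}=\partial$ at the base point, so the two relative interiors agree and Proposition~\ref{prop:ri_sub} delivers exactly the bijection required to pass between $\bar{v}$ and $\overline{V}$.
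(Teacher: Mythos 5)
Your forward direction reproduces the paper's argument step for step: Lemma~\ref{lem:ps_man_sym} for local symmetry of $M$, Theorem~\ref{thm:lift_man} and Corollary~\ref{cor:rest_smooth} for the lifted manifold and smoothness, Theorem~\ref{thm:prox_lift} for prox-regularity, and the chain Proposition~\ref{prop:ri_sub} $\to$ Proposition~\ref{prop:eqv} $\to$ Proposition~\ref{prop:lift_id} $\to$ Proposition~\ref{prop:eqv} for identifiability. Your second backward-direction observation (that prox-regularity at the base point forces $\hat{\partial}f(\bar{x})=\partial f(\bar{x})$, reconciling Proposition~\ref{prop:ri_sub}, which is stated in Fr\'echet terms, with Proposition~\ref{prop:eqv}, which uses the limiting subdifferential) is correct and fills a small gap that the paper passes over silently.

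Your first backward-direction fix, however, does not work as stated. You claim that replacing $M$ near $\bar{x}$ by its $\fix(\bar{x})$-symmetrization $M^{\mathrm{sym}}=\bigcup_{\sigma\in\fix(\bar{x})}\sigma M$ leaves $\lambda^{-1}(M)$ unchanged, justifying this by the (true) fact that only $M\cap\R^n_{\geq}$ contributes to the preimage. But symmetrizing $M$ can enlarge $M\cap\R^n_{\geq}$: take $n=2$, $\bar{x}=(0,0)$ so $\fix(\bar{x})=\Sigma^2$, and $M=\{(t,0):t\in\R\}$. Then $M\cap\R^2_{\geq}=\{(t,0):t\geq0\}$ while $M^{\mathrm{sym}}\cap\R^2_{\geq}=\{(t,0):t\geq 0\}\cup\{(0,-t):t\geq 0\}$, so $\lambda^{-1}(M)$ (PSD rank-$\leq1$ matrices) is strictly contained in $\lambda^{-1}(M^{\mathrm{sym}})$ (all rank-$\leq1$ matrices). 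The object that does preserve the preimage is $M':=\bigcup_{\sigma\in\fix(\bar{x})}\sigma(M\cap\R^n_{\geq})$ (one checks $M'\cap\R^n_{\geq}=M\cap\R^n_{\geq}$ because a point of $\R^n_{\geq}$ whose $\sigma$-image also lies in $\R^n_{\geq}$ is fixed by $\sigma$). Even so, the backward argument then yields partial smoothness of $f$ relative to $M'$, not the given $M$; since $\lambda^{-1}(M)$ carries no information about $M\setminus\R^n_{\geq}$, the converse implication can only be literally true for $M$ that agree with a locally symmetric set near $\bar{x}$. In short, the subtlety you flagged is genuine, but it should be resolved by reading the theorem with an implicit local-symmetry hypothesis on $M$ (harmless by Lemma~\ref{lem:ps_man_sym} and the forward direction), not by your proposed symmetrization.
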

\begin{proof}
Suppose that $f$ is ${\bf C}^{\infty}$-partly smooth at $\bar{x}$ relative to $M$. In light of Lemma~\ref{lem:ps_man_sym}, we deduce that $M$ is locally symmetric at $\bar{x}$. Consequently, Theorem~\ref{thm:lift_man} implies that the set $\lambda^{-1}(M)$ is a ${\bf C}^{\infty}$ manifold, while Corollary~\ref{cor:rest_smooth} implies that $f\circ\lambda$ is ${\bf C}^{\infty}$-smooth on $\lambda^{-1}(M)$ near $\overline{X}$. Applying Theorem~\ref{thm:prox_lift}, we conclude that $f\circ\lambda$ is prox-regular at $\bar{X}$. Consider now a subgradient $V\in\ri\partial (f\circ\lambda)(\overline{X})$. Then by Proposition~\ref{prop:ri_sub}, there exists a vector $v\in\ri \partial f(\bar{x})$ and a matrix $U\in{\bf O}^n_{\overline{X}}$ satisfying
$$V=U^T(\Diag v)U \quad\textrm{ and }\quad \overline{X}=U^T(\Diag \bar{x})U.$$
Observe by Proposition~\ref{prop:eqv}, the set $M$ is identifiable at $\bar{x}$ for $\bar{v}$. Then applying Proposition~\ref{prop:lift_id}, we deduce that $\lambda^{-1}(M)$ is identifiable (relative to $f\circ\lambda$) at $\overline{X}$ relative to $V$. Since $V$ is an arbitrary element of $\ri\partial (f\circ\lambda)(\overline{X})$, applying Proposition~\ref{prop:eqv}, we deduce that $f\circ\lambda$ is ${\bf C}^{\infty}$-partly smooth at $\overline{X}$ relative to $\lambda^{-1}(M)$, as claimed. The converse follows along the same lines.
\end{proof}
\smallskip

The forward implication of Theorem~\ref{thm:lift_pman} holds in the
case of ${\bf C}^p$-partly smooth functions (for
$p=2,\ldots,\infty$). The proof is identical except one needs to use
\cite[Theorem 4.21]{man} instead of Theorem~\ref{thm:lift_man}. We
record this result for ease of reference in future works.

\smallskip
\begin{thm}[Lifts of ${\bf C}^{p}$-partly smooth functions]
Consider a lsc function $f\colon\R^n\to\overline{\R}$ and a matrix $\overline{X}\in{\bf S}^n$.
Suppose that $f$ is locally symmetric around $\bar{x}:={\lambda(\overline{X})}$. If $f$ is
${\bf C}^{p}$-partly smooth (for $p=2,\ldots,\infty$) at $\bar{x}$ relative to $M$, then
$f\circ\lambda$ is ${\bf C}^{\infty}$-partly smooth at $\overline{X}$ relative to $\lambda^{-1}(M)$.
\end{thm}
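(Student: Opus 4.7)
My plan is to reproduce the proof of Theorem~\ref{thm:lift_pman} essentially verbatim, substituting a single ingredient so the argument accommodates finite smoothness class $p$. Inspecting that proof, the only point at which the $\mathbf{C}^\infty$ hypothesis enters in a way specific to $p=\infty$ is the invocation of Theorem~\ref{thm:lift_man} to produce a smooth lift $\lambda^{-1}(M)$ of the manifold $M$; replacing that step with the finite-order manifold lifting result \cite[Theorem~4.21]{man}, which is valid for locally symmetric $\mathbf{C}^p$ manifolds, keeps the rest of the argument intact. Every other tool used below --- Lemma~\ref{lem:ps_man_sym}, Corollary~\ref{cor:rest_smooth}, Theorem~\ref{thm:prox_lift}, Proposition~\ref{prop:ri_sub}, Proposition~\ref{prop:lift_id}, Proposition~\ref{prop:eqv} --- is insensitive to whether $p$ is finite or infinite.

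Concretely, I would proceed as follows. First, apply Lemma~\ref{lem:ps_man_sym} to the locally symmetric function $f$ to conclude that $M$ is itself locally symmetric around $\bar{x}$, which unlocks every spectral transfer statement in the paper. Next, invoke \cite[Theorem~4.21]{man} to lift $M$ to a smooth manifold $\lambda^{-1}(M)$ around $\overline{X}$, and use Corollary~\ref{cor:rest_smooth} to transfer the smoothness of $f|_M$ to smoothness of $(f\circ\lambda)|_{\lambda^{-1}(M)}$ near $\overline{X}$. Theorem~\ref{thm:prox_lift} delivers prox-regularity of $f\circ\lambda$ at $\overline{X}$. For the remaining sharpness and subdifferential-continuity conditions, I would reroute through identifiability via Proposition~\ref{prop:eqv}: given an arbitrary $V\in\ri\partial(f\circ\lambda)(\overline{X})$, Proposition~\ref{prop:ri_sub} supplies $U\in{\bf O}^n_{\overline{X}}$ and $v\in\ri\partial f(\bar{x})$ with $V=U^T(\Diag v)U$; the forward direction of Proposition~\ref{prop:eqv} (applied to $f$) identifies $M$ at $\bar{x}$ for $v$; Proposition~\ref{prop:lift_id} then identifies $\lambda^{-1}(M)$ at $\overline{X}$ for $V$. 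Since $V$ was an arbitrary element of $\ri\partial(f\circ\lambda)(\overline{X})$, the reverse direction of Proposition~\ref{prop:eqv} yields the claimed partial smoothness of $f\circ\lambda$ at $\overline{X}$ relative to $\lambda^{-1}(M)$.

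The main technical obstacle, and the reason this proof cannot be self-contained inside the present paper, is precisely the finite-$p$ manifold lift: the elementary projection-based argument of Section~\ref{sec:lift_man} is intrinsically unable to reach $\mathbf{C}^p$ from $\mathbf{C}^p$, because metric projection onto a $\mathbf{C}^p$ manifold is in general only $\mathbf{C}^{p-1}$ (cf.\ the Remark following Theorem~\ref{thm:lift_man}). Importing \cite[Theorem~4.21]{man} as a black box is what bridges this gap; with that in hand, the identifiability machinery of Section~\ref{sec:lift_id} plugs in uniformly in $p\in\{2,\ldots,\infty\}$ and the proof completes exactly as in Theorem~\ref{thm:lift_pman}.
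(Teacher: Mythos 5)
Your proposal matches the paper's own argument exactly: the paper proves this theorem by declaring the proof identical to that of Theorem~\ref{thm:lift_pman}, with the single substitution of \cite[Theorem 4.21]{man} for Theorem~\ref{thm:lift_man}, which is precisely the route you take (including the observation that Lemma~\ref{lem:ps_man_sym}, Corollary~\ref{cor:rest_smooth}, Theorem~\ref{thm:prox_lift}, Propositions~\ref{prop:ri_sub}, \ref{prop:lift_id} and \ref{prop:eqv} are insensitive to finite $p$). No gaps; this is the same proof, spelled out in slightly more detail than the paper bothers to give.
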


\section{Partly smooth duality for polyhedrally generated spectral functions}\label{sec:dual}
Consider a lsc, convex function $f\colon\E\to\overline{\R}$. Then
the {\em Fenchel conjugate} $f^{*}\colon\E\to\overline{\R}$ is
defined by setting $$f^{*}(y)=\sup_{x\in\R^n} \{\langle x, y\rangle
- f(x)\}.$$ Moreover, in terms of the powerset of $\E$, denoted
$\mathbb{P}(\E)$, we define a correspondence
$\mathcal{J}_f\colon\mathbb{P}(\E)\to\mathbb{P}(\E)$ by setting
$$\mathcal{J}_f(Q):=\bigcup_{x\in Q}\ri\partial f(x).$$ The
significance of this map will become apparent shortly. Before
proceeding, we recall some basic properties of the conjugation
operation:
\smallskip
\begin{description}
\item[Biconjugation:]\quad $f^{**}=f$,
\item[Subgradient inversion formula:]\quad $\partial f^{*}=(\partial f)^{-1}$,
\item[Fenchel-Young inequality:]\quad $\langle x,y\rangle\leq f(x)+f^{*}(y)$ for every $x,y\in\R^n$.
\end{description}
\smallskip

Moreover, convexity and conjugation behave well under spectral
lifts. See for example \cite[Section~5.2]{cov_lift}.
\smallskip
\begin{thm}[Lifts of convex sets and conjugation]
If $f\colon\R^n\to\overline{\R}$ is a symmetric function, then $f^{*}$ is also symmetric
and the formula $$(f\circ \lambda)^{*}=f^{*}\circ\lambda, \quad \textrm{holds}.$$
Furthermore $f$ is convex if and only if the spectral function $f\circ \lambda$ is convex.
\end{thm}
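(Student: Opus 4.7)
The plan is three-fold: first establish symmetry of $f^{*}$ by a change of variables, then derive the conjugation formula using the classical Fan/von Neumann trace inequality, and finally deduce the convexity equivalence by combining the formula with biconjugation and a restriction argument.

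For symmetry of $f^{*}$, I would fix $\sigma\in\Sigma^n$ and make the substitution $z=\sigma^{-1}x$ in the definition of the conjugate. Since permutations are orthogonal and $f$ is symmetric, this yields
$$f^{*}(\sigma y)=\sup_{x\in\R^n}\{\langle x,\sigma y\rangle-f(x)\}=\sup_{z\in\R^n}\{\langle z,y\rangle-f(z)\}=f^{*}(y).$$

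For the conjugation formula, the key tool is the classical trace inequality $\langle X,Y\rangle\leq\langle\lambda(X),\lambda(Y)\rangle$, valid for all $X,Y\in{\bf S}^n$, with equality whenever $X$ and $Y$ are simultaneously diagonalizable by a common orthogonal matrix with compatibly ordered eigenvalues. For the upper bound, I would apply this inequality inside the sup defining $(f\circ\lambda)^{*}(Y)$, substitute $x=\lambda(X)$, and use surjectivity of $\lambda$ onto $\R^n_{\geq}$ together with symmetry of $f$ (which via the scalar rearrangement inequality $\langle x,\lambda(Y)\rangle\leq\langle x^{\downarrow},\lambda(Y)\rangle$ lets me enlarge the supremum from $\R^n_{\geq}$ to all of $\R^n$) to conclude $(f\circ\lambda)^{*}(Y)\leq f^{*}(\lambda(Y))$. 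For the reverse bound, I would fix $U\in{\bf O}^n$ diagonalizing $Y$, and for each $x\in\R^n$ plug the test matrix $X:=U^{T}(\Diag x)U$ into the supremum: a direct trace computation gives $\langle X,Y\rangle=\langle x,\lambda(Y)\rangle$, while symmetry of $f$ gives $f(\lambda(X))=f(x)$, so taking the supremum over $x$ yields $(f\circ\lambda)^{*}(Y)\geq f^{*}(\lambda(Y))$.

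For the convexity equivalence, the forward direction proceeds by biconjugation: assuming $f$ is convex (and, in the standard case, proper and lsc, with degenerate cases handled by passing to the lsc hull), we have $f=f^{**}$, and applying the conjugation formula to $f$ and then to the still-symmetric function $f^{*}$ gives
$$f\circ\lambda=f^{**}\circ\lambda=(f^{*}\circ\lambda)^{*},$$
which is convex as a Fenchel conjugate. The converse is immediate by restriction: if $f\circ\lambda$ is convex on ${\bf S}^n$, then its restriction to the affine (in fact linear) subspace of diagonal matrices is convex, and since $f(\lambda(\Diag x))=f(x)$ by symmetry, $f$ itself is convex on $\R^n$.

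The main obstacle is Step 2: the conjugate $(f\circ\lambda)^{*}$ is defined by a supremum over a much larger space than $f^{*}$, and it is exactly the Fan/von Neumann trace inequality together with its simultaneous-diagonalization equality case that bridges the two sides. Once that step is secured, symmetry and the convexity equivalence are essentially formal consequences.
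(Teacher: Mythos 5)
The paper itself does not prove this theorem---it simply cites it to the literature (Section~5.2 of the reference on convex analysis of spectral functions). Your argument via the Fan/von Neumann trace inequality $\langle X,Y\rangle\leq\langle\lambda(X),\lambda(Y)\rangle$, with the lower bound obtained by plugging in simultaneously diagonalized test matrices $X=U^T(\Diag x)U$, is the standard proof of the conjugation formula and is correct, as are the symmetry and convexity steps.

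One minor simplification: for the upper bound $(f\circ\lambda)^*(Y)\leq f^*(\lambda(Y))$, once you apply the trace inequality and substitute $x=\lambda(X)$, the supremum runs over $\R^n_{\geq}\subset\R^n$, and the bound follows immediately from this inclusion. The rearrangement inequality you invoke is not needed for the one-sided estimate (it would serve to show that the two suprema are in fact equal, which is more than you need at that step). Your treatment of the convexity equivalence via biconjugation is fine; the properness/lsc caveat you flag is consistent with the paper's standing hypotheses at the start of Section~4, and the converse by restriction to diagonal matrices is exactly the right argument.
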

\smallskip

The following definition is standard.
\smallskip
\begin{defn}[Stratification]
{\rm A finite partition $\mathcal{A}$ of a set $Q\subset\E$ is a
{\em stratification} provided that for any partitioning sets (called
{\em strata}) $M_1$ and $M_2$ in $\mathcal{A}$, the implication
$$M_1\cap \cl M_2\neq \emptyset\quad \Longrightarrow \quad
M_1\subset \cl M_2,\quad \textrm{ holds}.$$ If the strata are open
polyhedra, then $\mathcal{A}$ is a {\em polyhedral stratification}.
If the strata are ${\bf C}^k$ manifolds, then $\mathcal{A}$ is a
${\bf C}^k$-{\em stratification}. }
\end{defn}

\bigskip
{\bf Stratification duality for convex polyhedral functions.} We now
establish the setting and notation for the rest of the section.
Suppose that $f\colon\R^n\to\overline{\R}$ is a convex {\em
polyhedral} function (epigraph of $f$ is a closed convex
polyhedron). Then $f$ induces a finite polyhedral stratification
$\mathcal{A}_f$ of $\dom f$ in a natural way. Namely, consider the
partition of $\epi f$ into open faces $\{F_i\}$. Projecting all
faces $F_i$, with $\dim F_i\leq n$, onto the first $n$-coordinates
we obtain a stratification of the domain $\dom f$ of $f$ that we
denote by $\mathcal{A}_f$. In fact, one can easily see that $f$ is
${\bf C}^{\infty}$-partly smooth relative to each polyhedron $M\in
\mathcal{A}_f$.
% Alternatively, each partitioning set $M\in\mathcal{A}_f$ admits a description
% using the subdifferential: fixing an arbitrary point $\bar{x}\in M$, we have
% $M=\{x: \partial f(x)=\partial f(\bar{x})\}$. For each integer
% $k=0,\ldots,n$ we define $$\mathcal{A}^k_f:=\{M\in \mathcal{A}_f:
% \dim M=k\}.$$

A key observation for us will be that the correspondence $f\xleftrightarrow{*}f^{*}$
is not only a pairing of functions, but it also induces a duality pairing between
$\mathcal{A}_f$ and $\mathcal{A}_{f^{*}}$. Namely, one can easily check that the
mapping $\mathcal{J}_f$ restricts to an invertible  mapping $\mathcal{J}_f\colon\mathcal{A}_f\to \mathcal{A}_{f^{*}}$
with inverse given by $\mathcal{J}_{f^{*}}$.

% In fact, the partition $\mathcal{A}_f$ initially can be defined by observing the equivalence
% $$M\in \mathcal{A}_{f^{*}} \quad \Longleftrightarrow \quad %(\mathcal{J}_{f^{*}}\circ\mathcal{J}_f) (M)=M$$

\bigskip
{\bf Limitations of stratification duality.} It is natural to ask
whether for general (nonpolyhedral) lsc, convex functions
$f\colon\R^n\to\overline{\R}$, the correspondence
$f\xleftrightarrow{*}f^{*}$, along with the mapping $\mathcal{J}$,
induces a pairing between {\em partly smooth manifolds} of $f$ and
$f^{*}$. Little thought, however shows an immediate obstruction:
images of ${\bf C}^{\infty}$-smooth manifolds under the map
$\mathcal{J}_f$ may fail to be even ${\bf C}^2$-smooth.
\smallskip
\begin{exa}[Failure of smoothness]
{\rm Consider the conjugate pair $$f(x,y)=\frac{1}{4}(x^4+y^4)
\quad\quad\textrm{and}\quad\quad
f^{*}(x,y)=\frac{3}{4}(|x|^\frac{4}{3}+|y|^\frac{4}{3}).$$ Clearly
$f$ is partly smooth relative to $\R^2$, whereas any possible
partition of $\R^2$ into partly smooth manifolds relative to $f^{*}$
must consist of at least three manifolds (one mani\-fold in each
dimension: one, two, and three). Hence no duality pairing between
partly smooth manifolds is possible. See the Figures~\ref{fig:figure2} and \ref{fig:figure3} for an
illustration.

\begin{figure}[h!]
\begin{minipage}[b]{0.45\linewidth}
\centering
\includegraphics[scale=0.5]{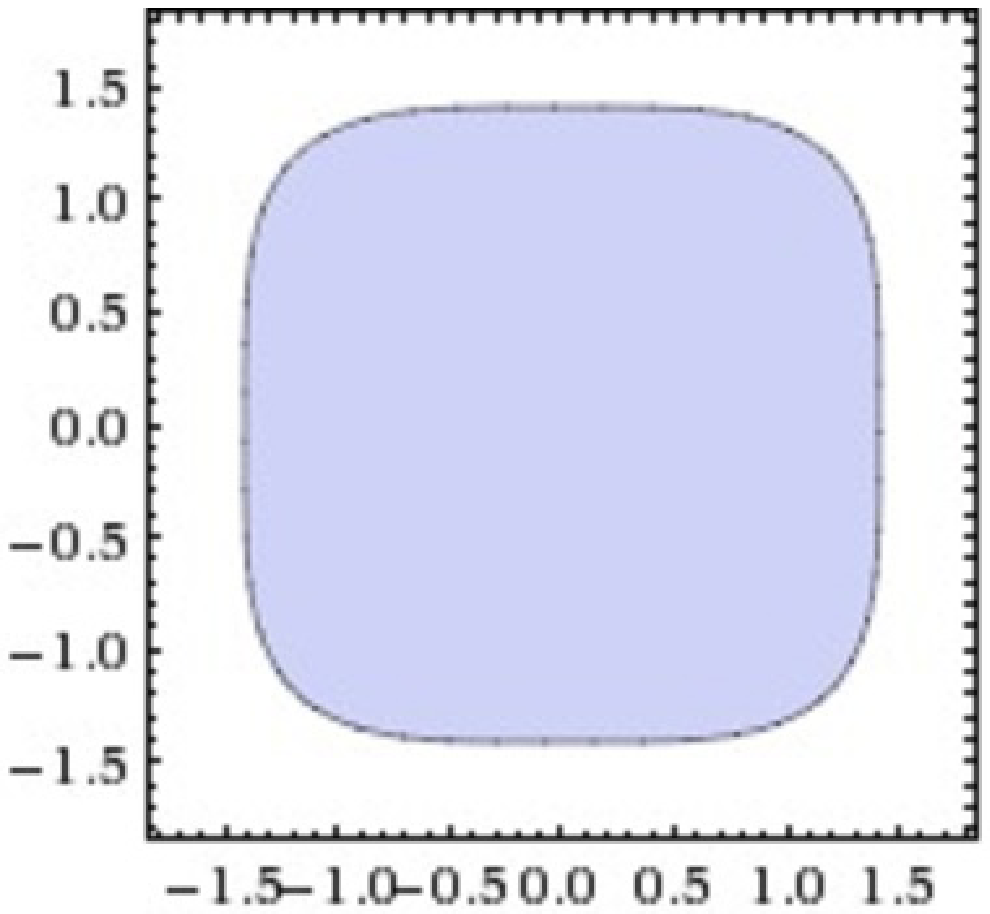}
\caption{$\{(x,y):x^4+y^4\leq 4\}$}
\label{fig:figure2}
\end{minipage}
\hspace{0.5cm}
\begin{minipage}[b]{0.45\linewidth}
\centering
\includegraphics[scale=0.475]{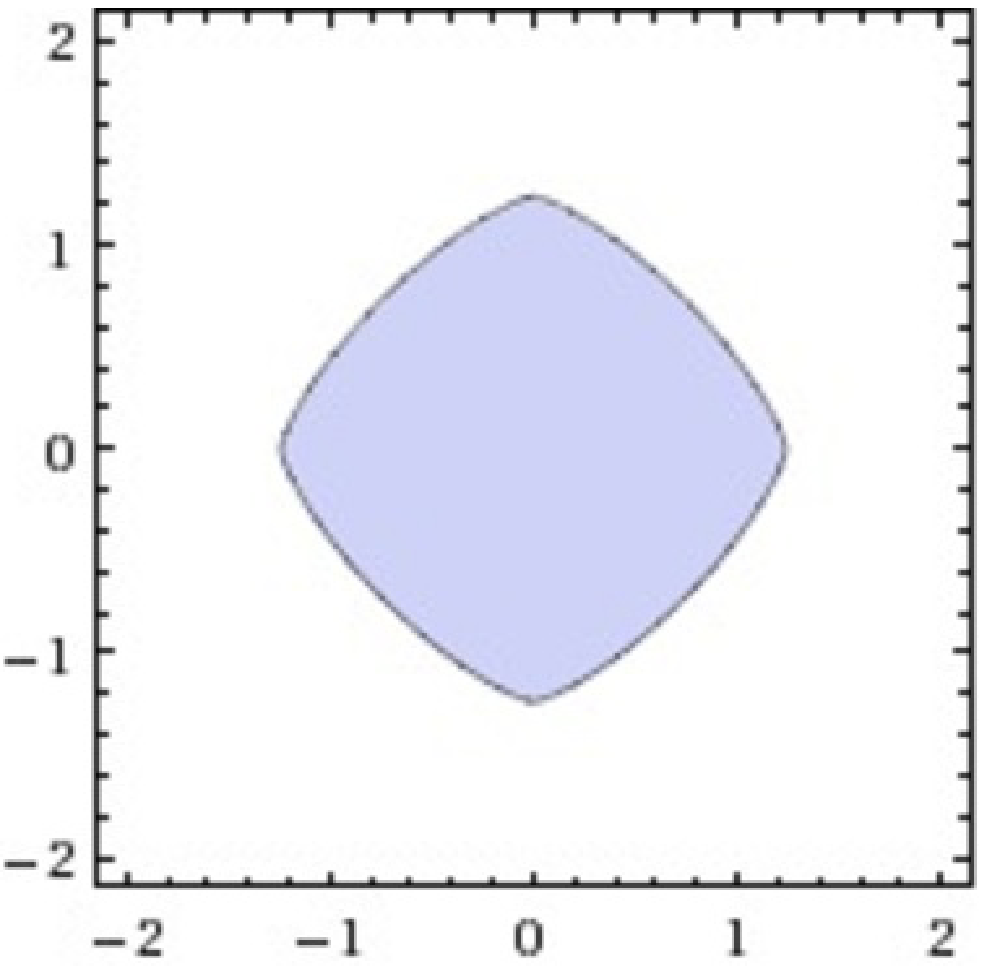}
\caption{$\{(x,y):|x|^\frac{4}{3}+|y|^\frac{4}{3}\leq \frac{4}{3}\}$}
\label{fig:figure3}
\end{minipage}
\end{figure}
}
\end{exa}
\smallskip

Indeed, this is not very surprising, since the convex duality is
really a duality between smoothness and {\em strict} convexity. See
for example \cite[Section 4]{cmd} or \cite[Theorem~11.13]{VA}. Hence
in general, one needs to impose tough strict convexity conditions in
order to hope for this type of duality to hold. Rather than doing
so, and more in line with the current work, we consider the spectral
setting. Namely, we will show that in the case of spectral functions
$F:=f\circ \lambda$, with $f$ symmetric and polyhedral --- functions
of utmost importance in eigenvalue optimization --- the mapping
$\mathcal{J}$ does induce a duality correspondence between partly
smooth manifolds of $F$ and $F^{\ast}$.

In the sequel, let us denote by
\[
M^{\mathrm{sym}}:=\bigcup_{\sigma \in \Sigma}\sigma M
\]
the symmetrization of any subset $M\subset \R^{n}$. Before we
proceed, we will need the following result.

\smallskip
\begin{lem}
[Path-connected lifts]\label{nhad}Let $M\subseteq \R^{n}$ be a path-connected set and assume that for any
permutation $\sigma \in \Sigma$, we either have $\sigma M=M$ or
$\sigma M\cap M=\emptyset$. Then $\lambda^{-1}(M^{\mathrm{sym}})$ is
a path-connected subset of $\mathbf{S}^{n}$.
\end{lem}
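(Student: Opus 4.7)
I will prove path-connectedness of $\lambda^{-1}(M^{\mathrm{sym}})$ by joining any two matrices in it to suitable diagonal matrices, and then sliding between those diagonals along a lifted path in $M$. Two ingredients are needed: each orbit $\lambda^{-1}(x)$ is path-connected, and a continuous path in $M$ lifts through the map $\Diag(\cdot)$ to a continuous path of diagonal matrices lying in $\lambda^{-1}(M^{\mathrm{sym}})$.

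\textbf{Step 1 (orbits are path-connected).} Fix $x\in\R_{\geq}^{n}$. The preimage $\lambda^{-1}(x)=\{U^{T}(\Diag x)U:U\in\mathbf{O}^{n}\}$ is the continuous image of $\mathbf{O}^{n}$ under the conjugation action. Although $\mathbf{O}^{n}$ has two connected components, the stabilizer $\mathbf{O}_{\Diag x}^{n}$ always contains the sign-reflections $\Diag(\pm 1,\ldots,\pm 1)$, and in particular it has elements of determinant $-1$. Hence the orbit coincides with the image of $SO(n)$, is therefore connected, and being a smooth submanifold of $\mathbf{S}^{n}$ is locally path-connected, so is path-connected.

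\textbf{Step 2 (reduction to diagonal matrices).} Fix $X\in\lambda^{-1}(M^{\mathrm{sym}})$. By definition there is a permutation $\sigma\in\Sigma$ with $y:=\sigma^{-1}\lambda(X)\in M$. Since $y$ is a permutation of $\lambda(X)$, we have $\lambda(\Diag y)=\lambda(X)$, so both $X$ and $\Diag y$ lie in the common orbit $\lambda^{-1}(\lambda(X))\subseteq\lambda^{-1}(M^{\mathrm{sym}})$. Step~1 then supplies a continuous path between $X$ and $\Diag y$ inside $\lambda^{-1}(M^{\mathrm{sym}})$.

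\textbf{Step 3 (sliding between diagonals).} Given $X_{1},X_{2}\in\lambda^{-1}(M^{\mathrm{sym}})$, produce associated points $y_{1},y_{2}\in M$ as in Step~2. Path-connectedness of $M$ yields a continuous curve $\gamma\colon[0,1]\to M$ joining them. Then $t\mapsto\Diag\gamma(t)$ is continuous in $\mathbf{S}^{n}$, and for every $t$ the eigenvalue vector $\lambda(\Diag\gamma(t))$ is a permutation of $\gamma(t)\in M$, hence lies in $M^{\mathrm{sym}}$ by the $\Sigma$-invariance of the symmetrization. Concatenating the three arcs $X_{1}\rightsquigarrow\Diag y_{1}\rightsquigarrow\Diag y_{2}\rightsquigarrow X_{2}$ yields the desired path in $\lambda^{-1}(M^{\mathrm{sym}})$.

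\textbf{Main obstacle.} The only nontrivial ingredient is Step~1, namely the connectedness of $\mathbf{O}^{n}$-orbits in $\mathbf{S}^{n}$ despite the disconnectedness of $\mathbf{O}^{n}$ itself; this is rescued by the ever-present ``sign-flip'' stabilizers of determinant $-1$. Once this is in hand the rest of the argument is essentially formal. I note in passing that the hypothesis ``$\sigma M=M$ or $\sigma M\cap M=\emptyset$'' is never actually invoked by my argument; presumably it is recorded here because it will be needed in subsequent developments of the paper, where $M$ plays the role of a single partly smooth manifold rather than a connected component of a symmetric set.
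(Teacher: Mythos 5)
Your proof is correct, and it takes a genuinely different — and in fact simpler — route than the paper's own argument. Both proofs begin with the same Step 1, namely that each orbit $\lambda^{-1}(x)$ is path-connected. They diverge at the choice of diagonal representative. The paper joins $X_i$ to $\Diag(\lambda(X_i))$, with $\lambda(X_i)\in\R^n_{\geq}$, and is then forced to connect these two sorted diagonals inside $\lambda^{-1}(M^{\mathrm{sym}})$. Since $M^{\mathrm{sym}}$ is a union of permuted copies of $M$ whose path-connectedness is not automatic, the paper must show that $\lambda(X_1)$ and $\lambda(X_2)$ lie in a single common translate $\sigma^{-1}M$; this is exactly where the hypothesis ``$\sigma M=M$ or $\sigma M\cap M=\emptyset$'' enters, together with a combinatorial argument decomposing a permutation into transpositions and using path-connectedness plus the intermediate value theorem to show that $M$ meets each hyperplane $\{z_i=z_j\}$ along the way. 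Your Step 2 sidesteps all of this: instead of the sorted representative, you choose $y_i\in M$ with $\lambda(X_i)=\sigma_i y_i$, so that $\Diag y_1$ and $\Diag y_2$ lie in $\Diag(M)$, the continuous image of the path-connected $M$. The sliding path $t\mapsto\Diag\gamma(t)$ then stays in $\lambda^{-1}(M^{\mathrm{sym}})$ automatically, since $\lambda(\Diag\gamma(t))$ is a sorted permutation of $\gamma(t)\in M$. This is cleaner, and your closing observation is correct: the disjointness hypothesis is never needed for the conclusion as stated, only for the particular proof the authors chose. In other words, you have established a slightly stronger lemma than the one in the paper, while shortening the argument.
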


\begin{proof}
Let $X_{1},X_{2}$ be in $\lambda^{-1}(M^{\mathrm{sym}}),$
and set $x_{i}=\lambda(X_{i})\in M^{\mathrm{sym}}\cap
\R_{\geq}^{n},$ for $i\in \{1,2\}$. It is standard to check that the sets $\lambda^{-1}%
(x_{i})$ are path-connected manifolds for $i =1,2.$ Consequently the matrices $X_{i}$ and $\mathrm{Diag}(x_{i})$ can be
joined via a path lying in $\lambda
^{-1}(x_{i})$. Thus in order to construct a path joining $X_{1}$
to $X_{2}$ and lying in $\lambda^{-1}(M^{\mathrm{sym}})$ it would be
sufficient to join $x_{1}$ to $x_{2}$ inside $M^{\mathrm{sym}}$.
This in turn will follow immediately if both $\sigma x_{1},\sigma x_{2}$
belong in $M$ for some $\sigma \in \Sigma$. To establish this, we
will assume without loss of generality that $x_{1}$ lies in $M$. In
particular, we have $M\cap \R_{\geq}^{n}\neq \emptyset$ and we will establish
the inclusion $x_{2}\in M$.

To this end, consider a permutation $\sigma \in \Sigma$ satisfying $x_{2}\in \sigma
M\cap \R_{\geq }^{n}$. Our immediate goal is to establish $\sigma M\cap M\neq
\emptyset,$ and thus $\sigma M=M$ thanks to our assumption. To this end,
consider the point $y\in M$ satisfying $x_{2}=\sigma y$. If $y$ lies in $\R_{\geq}^{n}$,
then we deduce $y= x_2$ and we are done. Therefore, we can assume $y\notin \R_{\geq}^{n}$.
We can then consider the decomposition $\sigma=\sigma_{k}\cdots \sigma_{1}$ of
the permutation $\sigma$ into 2-cycles $\sigma_{i}$ each of which
permutes exactly two coordinates of $y$ that are not in the right
(decreasing) order. For the sake of brevity, we omit details of the construction of such a decomposition; besides, it is rather standard. We claim now $\sigma_{1}M=M$. To see this, suppose that
$\sigma _{1}$ permutes the $i$ and $j$ coordinates of $y$ where
$y_{i}<y_{j}$ and $i>j$. Since $x_1$ lies in $\R_{\geq}^{n}$ and $M$ is path-connected, there exists a point $z\in M$ satisfying $z_{i}=z_{j}$. Then
$\sigma_{1}z=z$, whence $\sigma_{1}M=M$ and $\sigma_{1}y\in M$.
Applying the same argument to $\sigma_{1}y$ and $\sigma_{1}M$ with
the 2-cycle $\sigma_{2}$ we obtain $\sigma_{2}\sigma _{1}M=M$ and
$\sigma_{2}\sigma_{1}y\in M$. By induction, $\sigma M=M$. Thus
$x_{2}\in M$ and the assertion follows.
\end{proof}

\bigskip
{\bf Stratification duality for spectral lifts.} Consider a symmetric, convex polyhedral function $f\colon\R^n\to\overline{\R}$
together with its induced stratification $\mathcal{A}_f$ of $\dom
f$. Then with each polyhedron $M\in \mathcal{A}_f$, we may associate
the symmetric set $M^{{\rm sym}}$. We record some properties of such
sets in the following lemma.

\smallskip
\begin{lem}[Properties of $\mathcal{A}_f$]\label{lem:perm_poly}
Consider a symmetric, convex polyhedral function $f\colon\R^n\to\overline{\R}$ and the induced stratification
$\mathcal{A}_f$ of $\dom f$. Then the following are true.
\begin{itemize}
\item[{\rm (i)}] For any set $M_1, M_2\in\mathcal{A}_f$ and any permutation $\sigma\in\Sigma$,
the sets $\sigma M_1$ and $M_2$ either coincide or are disjoint.
\item[{\rm (ii)}] The action of $\Sigma$ on $\R^n$ induces an action of $\Sigma$ on $$\mathcal{A}^k_f:=\{M\in\mathcal{A}_f: \dim M=k\}$$
for each $k=0,\ldots,n$. In particular, the set $M^{{\rm sym}}$
is simply the union of all polyhedra belonging to the orbit of $M$
under this action.
\item[{\rm (iii)}] For any polyhedron $M\in \mathcal{A}_f$, and every point $x\in M$,
there exists a neighborhood $U$ of $x$ satisfying $U\cap M^{{\rm
sym}}=U\cap M$. Consequently, $M^{{\rm sym}}$ is a ${\bf
C}^{\infty}$ manifold of the same dimension as $M$. \\
Moreover, $\lambda^{-1}(M^{\rm sym})$ is connected, whenever $M$ is.
\end{itemize}
\end{lem}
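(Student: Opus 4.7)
The plan is to exploit the polyhedral structure together with the $\Sigma$-invariance of $\epi f$ inherited from the symmetry of $f$. For (i), because $f$ is symmetric, every permutation $\sigma\in\Sigma$ acts on $\epi f$ by $(x,r)\mapsto(\sigma x,r)$, and this action permutes the open faces of the polyhedron $\epi f$ while preserving their dimensions. Projecting, $\sigma$ carries strata of $\mathcal{A}_f$ to strata of $\mathcal{A}_f$ of the same dimension. Since $\mathcal{A}_f$ is a partition of $\dom f$, any two strata either coincide or are disjoint, and (i) follows at once. The same dimension-preservation gives the restricted action on $\mathcal{A}_f^k$ asked for in (ii), and the identification of $M^{\mathrm{sym}}$ with the orbit of $M$ is immediate from the definition $M^{\mathrm{sym}}=\bigcup_{\sigma}\sigma M$.

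The substantive work lies in (iii). Fix $x\in M$ and any $\sigma\in\Sigma$ with $\sigma M\neq M$; the key step is to show $x\notin\overline{\sigma M}$. Suppose for contradiction it is. Then $M\cap\overline{\sigma M}\neq\emptyset$, so the stratification property forces $M\subset\overline{\sigma M}$. Now $\overline{\sigma M}$ is a closed polyhedron with relative interior $\sigma M$, so $\overline{\sigma M}=\sigma M\cup(\overline{\sigma M}\setminus\sigma M)$, where the second set is a union of proper faces and hence has dimension strictly less than $\dim\sigma M=\dim M$. Combined with the disjointness $M\cap\sigma M=\emptyset$ supplied by (i), this yields $M\subset\overline{\sigma M}\setminus\sigma M$, contradicting the fact that $M$ is relatively open of dimension $\dim M$. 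Thus $x\notin\overline{\sigma M}$, so a neighborhood $U_{\sigma}$ of $x$ is disjoint from $\sigma M$. Intersecting the finitely many such $U_\sigma$ (as $\sigma$ ranges over the finite group $\Sigma$) produces the required neighborhood $U$ with $U\cap M^{\mathrm{sym}}=U\cap M$.

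The remaining statements follow quickly. Each $\sigma M$ is the relative interior of a polyhedron, hence a ${\bf C}^{\infty}$ manifold of dimension $\dim M$; the local identity just established shows that $M^{\mathrm{sym}}$ coincides with one such manifold around each of its points, and so is itself a ${\bf C}^{\infty}$ manifold of dimension $\dim M$. Finally, any stratum $M$ is a relatively open polyhedron, in particular convex and therefore path-connected; together with (i), the hypotheses of Lemma~\ref{nhad} are met, and we conclude that $\lambda^{-1}(M^{\mathrm{sym}})$ is path-connected, a fortiori connected. The most delicate point of the argument is the dimensional contradiction used to rule out $x\in\overline{\sigma M}$: it depends critically on polyhedrality (so that proper faces have strictly smaller dimension) and on the frontier behavior built into the stratification $\mathcal{A}_f$.
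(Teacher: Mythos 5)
Your proof is correct and fills in exactly the details the paper elides (the paper declares parts (i)--(iii) up to the connectedness claim ``straightforward'' and only credits Lemma~\ref{nhad} for the final connectedness assertion). Your handling of (i) and (ii)---using that each $\sigma$ permutes the open faces of $\epi f$ dimension-preservingly and hence permutes the projected strata---is the expected argument, and your reduction of the connectedness claim to Lemma~\ref{nhad} via convexity of the relatively open polyhedron $M$ is exactly what the paper intends.

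The substantive contribution is your argument for (iii), and it is sound: to rule out $x\in\overline{\sigma M}$ you invoke the frontier condition of the stratification to upgrade $M\cap\overline{\sigma M}\neq\emptyset$ to $M\subset\overline{\sigma M}$, note $M\cap\sigma M=\emptyset$ from (i), and then derive a dimensional contradiction because $\overline{\sigma M}\setminus\sigma M$ is a union of proper faces of a polyhedron of the same dimension as $M$. This is precisely the place where polyhedrality enters, and you correctly flag it. One small remark: for the manifold conclusion at an arbitrary point $y\in M^{\rm sym}$ you need the local identity $V\cap M^{\rm sym}=V\cap\sigma M$ at $y\in\sigma M$; this does follow by applying the same statement to the stratum $\sigma M$ (which is again in $\mathcal{A}_f$ by (ii), with $(\sigma M)^{\rm sym}=M^{\rm sym}$), and your phrasing ``around each of its points'' covers this, though it would be worth saying explicitly.
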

\smallskip

The last assertion follows from Lemma~\ref{nhad}. The remaining
assertions are straightforward and hence we omit their proof.

Notice that the strata of the stratification $\mathcal{A}_f$ are
connected ${\bf C}^{\infty}$ manifolds, which fail to be symmetric
in general. In light of Lemma~\ref{lem:perm_poly}, the set $M^{{\rm
sym}}$ is a ${\bf C}^{\infty}$ manifold and a disjoint union of open
polyhedra. Thus the collection $$\mathcal{A}^{\rm sym}_f:=\{M^{{\rm
sym}}: M\in \mathcal{A}_f\},$$ is a stratification of $\dom f$,
whose strata are now symmetric manifolds. Even though the new strata
are disconnected, they give rise to connected lifts
$\lambda^{-1}(M^{{\rm sym}})$. One can easily verify
that, as before, $\mathcal{J}_f$ restricts to an invertible mapping
$\mathcal{J}_f\colon\mathcal{A}^{\rm sym}_f\to \mathcal{A}^{\rm
sym}_{f^{*}}$ with inverse given by the restriction of
$\mathcal{J}_{f^{*}}$.

We now arrive at the main result of the section.
\smallskip

\begin{thm}[Lift of the duality map]\label{thm:liftdual}
Consider a symmetric, convex polyhedral function $f\colon\R^n\to\overline{\R}$ and define the spectral function $F:=f\circ\lambda$. Let $\mathcal{A}_f$ be the finite polyhedral partition of $\dom f$ induced by $f$, and define the collection $$\mathcal{A}_F:=\Big\{\lambda^{-1}(M^{{\rm sym}}): M\in\mathcal{A}_f\Big\}.$$
Then the following properties hold:
\begin{itemize}
\item[{\rm (i)}]
$\mathcal{A}_F$ is a ${\bf C}^{\infty}$-stratification of $\dom F$ comprised of connected manifolds,
\item[{\rm (ii)}] $F$ is ${\bf C}^{\infty}$-partly smooth relative to each set $\lambda^{-1}(M^{{\rm sym}})\in \mathcal{A}_F$.
\item[{\rm (iii)}] The assignment $\mathcal{J}_F\colon \mathbb{P}({\bf S}^n)\to\mathbb{P}({\bf S}^n)$
restricts to an invertible mapping
$\mathcal{J}_F\colon\mathcal{A}_F\to \mathcal{A}_{F^{*}}$ with
inverse given by the restriction of $\mathcal{J}_{F^{*}}$.
\item[{\rm (iv)}]
The following diagram commutes:
\begin{center}
\begin{tikzpicture}[node distance=2cm, auto]
  \node (AF) {$\mathcal{A}_F$};
  \node (AFs) [right of=AF] {$\mathcal{A}_{F^{*}}$};
  \node (Af) [below of=AF] {$\mathcal{A}^{{\rm sym}}_f$};
  \node (Afs) [below of=AFs] {$\mathcal{A}^{{\rm sym}}_{f^{*}}$};
  \draw[->] (AF) to node {$\mathcal{J}_F$} (AFs);
  \draw[->] (Af) to node [swap] {$\mathcal{J}_f$} (Afs);
  \draw[->] (Af) to node {$\lambda^{-1}$} (AF);
  \draw[->] (Afs) to node [swap] {$\lambda^{-1}$} (AFs);
\end{tikzpicture}
\end{center}
That is, the equation $(\lambda^{-1}\circ\mathcal{J}_f)(M^{{\rm sym}})=(\mathcal{J}_F\circ\lambda^{-1})(M^{{\rm sym}})$ holds for every set $M^{{\rm sym}}\in\mathcal{A}^{{\rm sym}}_f$.
\end{itemize}
\end{thm}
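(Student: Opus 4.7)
The plan is to address the four assertions in the stated order, with (i) and (ii) being structural consequences of the lifting theorems from Sections~\ref{sec:lift_man}--\ref{sec:lift_id}, and the real substance of the theorem lying in (iv), with (iii) following from (iv) via the polyhedral duality between $\mathcal{A}^{\rm sym}_f$ and $\mathcal{A}^{\rm sym}_{f^{*}}$.

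For (i), each symmetrized stratum $M^{\rm sym}$ is a locally symmetric $\mathbf{C}^{\infty}$ manifold by Lemma~\ref{lem:perm_poly}(iii), so Theorem~\ref{thm:lift_man} yields that $\lambda^{-1}(M^{\rm sym})$ is a $\mathbf{C}^{\infty}$ manifold. Path-connectedness follows from Lemma~\ref{nhad}, applied to any polyhedron $M\in \mathcal{A}_f$ (convex, hence path-connected), with the disjointness hypothesis secured by Lemma~\ref{lem:perm_poly}(i). For the frontier condition, I would first establish $\cl \lambda^{-1}(M^{\rm sym})=\lambda^{-1}(\cl M^{\rm sym})$: the inclusion $\subset$ is from continuity of $\lambda$, and for $\supset$, if $\lambda(X)\in \cl M^{\rm sym}$, write $X=U^{T}(\Diag \lambda(X))U$, pick a sequence $x_i\in M^{\rm sym}$ with $x_i\to \lambda(X)$, and set $X_i:=U^T(\Diag x_i)U$. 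The frontier condition for $\mathcal{A}_F$ then reduces to the corresponding one for the symmetrized polyhedral stratification $\mathcal{A}^{\rm sym}_f$, which in turn follows from the fact that the $\Sigma$-action permutes the strata of $\mathcal{A}_f$.

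For (ii), the polyhedrality of $f$ gives $\mathbf{C}^{\infty}$-partial smoothness relative to each stratum $M\in\mathcal{A}_f$ (as noted in the paper's preamble on stratification duality). By Lemma~\ref{lem:perm_poly}(iii), $M^{\rm sym}$ locally coincides with $\sigma M$ at any point of $\sigma M$, so combined with symmetry of $f$, partial smoothness relative to $M^{\rm sym}$ is immediate. Invoking Theorem~\ref{thm:lift_pman} then yields $\mathbf{C}^{\infty}$-partial smoothness of $F$ relative to $\lambda^{-1}(M^{\rm sym})$.

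The substance is (iv). Since $f$ is convex, Fr\'{e}chet and limiting subdifferentials coincide, so Proposition~\ref{prop:ri_sub} gives $\ri \partial F(X)=\{U^{T}(\Diag v)U:v\in\ri\partial f(\lambda(X)),\, U\in\mathbf{O}^n_X\}$. Unpacking $\mathcal{J}_F(\lambda^{-1}(M^{\rm sym}))$ with this formula and using that $U\in\mathbf{O}^n_X$ amounts to $X=U^T(\Diag x_0)U$ with $x_0\in\R^n_\geq$, I would rewrite it as
\[
\{U^T(\Diag v) U:\, U\in\mathbf{O}^n,\ x_0\in \R^n_\geq\cap M^{\rm sym},\ v\in \ri\partial f(x_0)\}.
\]
The other side, $\lambda^{-1}(\mathcal{J}_f(M^{\rm sym}))$, is $\{Y:\lambda(Y)\in\ri\partial f(x)\text{ for some }x\in M^{\rm sym}\}$. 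Matching the two uses the symmetry identity $\sigma\,\ri\partial f(x)=\ri\partial f(\sigma x)$: for the forward direction, given $Y=U^T(\Diag v)U$ with $v\in\ri\partial f(x_0)$, take a sorting permutation $\tau$ of $v$ so $\lambda(Y)=\tau v\in\ri\partial f(\tau x_0)$ and $\tau x_0\in M^{\rm sym}$; for the reverse, given $\lambda(Y)\in\ri\partial f(x)$ with $x\in M^{\rm sym}$, sort $x$ via some $\sigma$ to get $\tilde{x}:=\sigma x\in\R^n_\geq\cap M^{\rm sym}$, set $v:=\sigma \lambda(Y)\in\ri\partial f(\tilde{x})$, and choose any diagonalization $Y=U^T(\Diag v)U$. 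Assertion (iii) is then formal: by (iv) the map $\mathcal{J}_F$ is conjugate under $\lambda^{-1}$ to $\mathcal{J}_f:\mathcal{A}^{\rm sym}_f\to\mathcal{A}^{\rm sym}_{f^{*}}$, which is a bijection with inverse $\mathcal{J}_{f^*}$ (polyhedral case, preamble); applying the same diagram on the dual side (using $F^{*}=f^{*}\circ\lambda$ from the paper's lifting-of-conjugation result) identifies the inverse with the restriction of $\mathcal{J}_{F^{*}}$. The main obstacle is the bookkeeping in (iv) — particularly the interplay between sorted and unsorted representatives in $\mathbf{O}^n_X$ — but Corollary~\ref{cor:conjug2} together with the subdifferential symmetry of Section~\ref{sec:lift_id} handles this cleanly.
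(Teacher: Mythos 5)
Your proposal is correct, and parts~(ii)--(iv) follow essentially the paper's own route: deduce~(ii) from Theorem~\ref{thm:lift_pman}, establish the set equality $\mathcal{J}_F(\lambda^{-1}(M^{\rm sym}))=\lambda^{-1}(\mathcal{J}_f(M^{\rm sym}))$ by unpacking $\ri\partial F$ via Proposition~\ref{prop:ri_sub} and resorting with a permutation, and then read off~(iii) from commutativity and the polyhedral duality $\mathcal{J}_f\leftrightarrow\mathcal{J}_{f^*}$. The one genuine departure is in~(i): the paper obtains the stratification (frontier) property by invoking the transfer principle for stratifications from \cite[Theorem~4.8]{mather}, whereas you bypass that citation with a self-contained argument, proving $\cl\lambda^{-1}(M^{\rm sym})=\lambda^{-1}(\cl M^{\rm sym})$ directly (the $\supset$ inclusion by conjugating $\Diag x_i$ with a fixed $U$ diagonalizing $X$, using symmetry of $M^{\rm sym}$) and then pushing the frontier condition for $\mathcal{A}^{\rm sym}_f$, itself inherited from $\mathcal{A}_f$ under the $\Sigma$-action, through $\lambda^{-1}$. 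This buys a more elementary and self-contained treatment of~(i) at the cost of a bit of bookkeeping; the paper's citation is terser but relies on a nontrivial external result. Both are valid.
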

\begin{proof} In light of Lemma~\ref{lem:perm_poly}, each set $M^{{\rm
sym}}\in\mathcal{A}^{{\rm sym}}_f$ is a symmetric ${\bf C}^{\infty}$
manifold. The fact that $\mathcal{A}_F$ is a ${\bf
C}^{\infty}$-stratification of $\dom F$ now follows from the
transfer principle for stratifications \cite[Theorem~4.8]{mather}, while the fact that each manifold $\lambda^{-1}(M^{{\rm sym}})$ is connected follows immediately from Lemma~\ref{lem:perm_poly}.
Moreover, from Theorem~\ref{thm:lift_pman}, we deduce that $F$ is
${\bf C}^{\infty}$-partly smooth relative to each set in
$\mathcal{A}_F$.

%Since $\mathcal{A}_f$ is a partition of $\dom f$, we deduce that  $\mathcal{A}_F$ is a partition of $\dom F$. Let $L^{{\rm sym}}$ and $M^{{\rm sym}}$ be arbitrary sets in $\mathcal A$. The implication
%$$L^{{\rm sym}}\cap \cl M^{{\rm sym}}\neq \emptyset\quad \Longrightarrow \quad L^{{\rm sym}}\subset \cl M^{{\rm sym}},$$
%is immediate.
%We now claim that the implication
%\begin{equation}\label{eqn:imp}
%L^{{\rm sym}}\subset \cl M^{{\rm sym}}\quad\Longrightarrow\quad \lambda^{-1}(L^{{\rm sym}})\subset\cl \lambda^{-1}(M^{{\rm sym}}),
%\end{equation}
%holds. To this end, consider a matrix $X\in\lambda^{-1}(L^{{\rm sym}})$ and a  decomposition $X=U^{T} (\diag \lambda(X))~U$ where $U\in{\bf O}^n$. Then there exists a sequence $x_i\to \lambda(X)$ in $M^{{\rm sym}}$. Observe that since $M^{{\rm sym}}$ is symmetric, the matrices $X_i:=U^T(\diag x_i)~ U$ lie in $\lambda^{-1}(M^{{\rm sym}})$ and converge to $X$. This establishes the validity of the implication.

%Now suppose $\lambda^{-1}(L^{{\rm sym}})\cap \cl \lambda^{-1}(M^{{\rm sym}})\neq\emptyset$. Then by continuity of $\lambda$, we have $L^{{\rm sym}}\cap \cl M^{{\rm sym}}\neq\emptyset$. We deduce that the inclusion $L^{{\rm sym}}\subset \cl M^{{\rm sym}}$ holds. Then using $(\ref{eqn:imp})$, we obtain $\lambda^{-1}(L^{{\rm sym}})\subset\cl \lambda^{-1}(M^{{\rm sym}})$, thus verifying that $\mathcal{A}_F$ is a stratification of $\dom F$.

Consider now a set $M^{{\rm sym}}\in\mathcal{A}^{{\rm sym}}_f$ for some  $M\in\mathcal{A}_f$.
%and define $\tilde{M}:=\lambda^{-1}\Big(\bigcup_{\sigma\in\Sigma}\sigma M\Big)=\bigcup_{x\in M} {\bf O}^n.(\diag x)$.
Then we have:
\begin{align*}
\mathcal{J}_F(\lambda^{-1}(M^{{\rm sym}}))&=\bigcup_{X\in\lambda^{-1}(M^{{\rm sym}})}\ri \partial F(X)\\
&=\bigcup_{X\in \lambda^{-1}(M^{{\rm sym}})} \{U^T (\Diag v) U: v\in
\ri\partial f(\lambda(X)) \textrm{ and } U\in {\bf O}^n_X\},
\end{align*}
and concurrently,
$$\lambda^{-1}(\mathcal{J}_f(M^{{\rm sym}}))=\lambda^{-1}\Big(\bigcup_{x\in M^{{\rm sym}}} \ri\partial f(x)\Big)
=\bigcup_{x\in M^{{\rm sym}},~ v\in{\scriptsize \ri}\partial f(x)} {\bf O}^n.(\Diag v).$$

We claim that the equality $\lambda^{-1}(\mathcal{J}_f(M^{{\rm
sym}}))=\mathcal{J}_F(\lambda^{-1}(M^{{\rm sym}}))$ holds. The
inclusion ``$\supset$'' is immediate. To see the converse, fix a
point $x\in M^{{\rm sym}}$, a vector $v\in\ri\partial f(x)$, and a
matrix $U\in{\bf O}^n$. We must show $V:=U^T(\Diag v)U\in
\mathcal{J}_F(\lambda^{-1}(M^{{\rm sym}}))$. To see this, fix a
permutation $\sigma\in\Sigma$ with $\sigma x\in\R^n_{\geq}$, and
observe
$$U^T(\Diag v)U=(A_{\sigma} U)^T(\Diag \sigma v)A_{\sigma} U,$$
where $A_{\sigma}$ denotes the matrix representing the permutation $\sigma$.
Define a matrix $X:=(A_{\sigma} U)^T(\Diag \sigma x)A_{\sigma} U$. Clearly, we have $V\in \ri\partial F(X)$ and $X\in\lambda^{-1}(M^{{\rm sym}})$. This proves the claimed equality. Consequently, we deduce that
the assignment $\mathcal{J}_F\colon \mathbb{P}({\bf S}^n)\to\mathbb{P}({\bf S}^n)$
restricts to a mapping $\mathcal{J}_F\colon\mathcal{A}_F\to \mathcal{A}_{F^{*}}$, and that the diagram commutes. Commutativity of the diagram along with the fact that $\mathcal{J}_{f^{*}}$ restricts to be the inverse of  $\mathcal{J}_{f}\colon\mathcal{A}^{{\rm sym}}_f\to\mathcal{A}^{{\rm sym}}_{f^{*}}$ implies that $\mathcal{J}_{F^{*}}$ restricts to be the inverse of  $\mathcal{J}_{F}\colon\mathcal{A}_F\to\mathcal{A}_{F^{*}}$.
\end{proof}

\smallskip
\begin{exa}[Constant rank manifolds]
{\rm
Consider the closed convex cones of positive (respectively negative) semi-definite matrices ${\bf S}^n_{+}$ (respectively ${\bf S}^n_{-}$). Clearly, we have equality ${\bf S}^n_{\pm}=\lambda^{-1}(\R^n_{\pm})$.
Define the constant rank manifolds
$$M^{\pm}_k:=\{X\in{\bf S}^n_{\pm}: \rank X=k\}, \quad\textrm{ for } k=0,\ldots, n.$$ Then using Theorem~\ref{thm:liftdual} one can easily check that the manifolds $M^{\pm}_k$ and $M^{\mp}_{n-k}$ are dual to each other under the conjugacy correspondence $\delta_{{\bf S}^n_{+}}\xleftrightarrow{*}\delta_{{\bf S}^n_{-}}$.
}
\end{exa}

\section{Extensions to nonsymmetric matrices}\label{sec:nonsymm}
Consider the space of $n\times m$ real matrices ${\bf M}^{n\times m}$, endowed with the trace inner-product $\langle X,Y\rangle=\tr(X^T Y)$, and the corresponding Frobenius norm. We will let the group ${\bf O}^{n,m}:={\bf O}^n\times {\bf O}^m$ act on ${\bf M}^{n\times m}$ simply by defining
$$(U,V).X=U^T X V \textrm{ for all } (U,V)\in {\bf O}^{n,m} \textrm{ and } X\in {\bf M}^{n\times m}.$$ Recall that {\em singular values} of a matrix $A\in {\bf M}^{n\times m}$ are defined to be the square roots of the eigenvalues of the matrix $A^{T}A$.
The {\em singular value mapping} $\sigma:{\bf M}^{n\times m}\to\R^m$ is simply the mapping taking each matrix $X$ to its vector $(\sigma_1(X),\ldots,\sigma_m(X))$ of singular values in non-increasing order. We will be
interested in functions $F\colon {\bf M}^{n\times m}\to\overline{\R}$ that are invariant under the action of ${\bf O}^{n,m}$. Such functions $F$ can necessarily be represented as a composition $F=f\circ\sigma$, where the outer-function $f\colon\R^m\to\overline{\R}$ is {\em absolutely permutation-invariant}, meaning invariant under all signed permutations of coordinates. As in the symmetric case, it is useful to localize this notion. Namely, we will say that a function $f$ is {\em locally absolutely permutation-invariant} around a point $\bar{x}$ provided that for each signed permutation $\sigma$ fixing $\bar{x}$, we have $f(\sigma x)=f(x)$ for all $x$ near $\bar{x}$. Then essentially all of the results presented in the symmetric case have natural analogues in this setting (with nearly identical proofs).

\smallskip
\begin{thm}[The nonsymmetric case: lifts of manifolds]
Consider a matrix $\bar{X}\in{\bf M}^{n\times m}$ and a set $M\subset\R^m$ that
is locally absolutely permutation-invariant around $\bar{x}:=\sigma(\bar{X})$.
Then $M$ is a ${\bf C}^{\infty}$ manifold around $\bar{x}$ if and only if the
set $\sigma^{-1}(M)$ is a ${\bf C}^{\infty}$ manifold around $\bar{X}$.
\end{thm}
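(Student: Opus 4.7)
The plan is to mirror the proof of Theorem~\ref{thm:lift_man}, replacing the eigenvalue mapping $\lambda$ by the singular value mapping $\sigma$, the orthogonal conjugation action by the action of ${\bf O}^{n,m}$, and local symmetry by local absolute permutation invariance. The two genuine inputs needed are the singular-value analogues of Theorems~\ref{thm:sm_lift} and \ref{thm:lift_spec} (lifts of $\mathbf{C}^p$-smoothness and of prox-regularity under $\sigma$), both of which are known in the literature and are explicitly asserted by the authors in the preamble of this section. The lemmas of Section~\ref{sec:lift_man} about the metric projection (Lemmas~\ref{lem:smooth_proj} and \ref{lem:proj_dev}) are purely Euclidean and transfer verbatim.

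First I would consider the function
\[
h(x):=\tfrac{1}{2}\|x\|^{2}-\tfrac{1}{2}d_{M}^{2}(x),
\]
and, under the forward hypothesis that $M$ is a $\mathbf{C}^{\infty}$ manifold near $\bar{x}$, note that $M$ is prox-regular at $\bar{x}$. By the singular-value analogue of Theorem~\ref{thm:lift_spec}, $h$ is locally absolutely permutation-invariant at $\bar{x}$, and the distance identity $d_{\sigma^{-1}(M)}=d_{M}\circ\sigma$ holds locally around $\bar{X}$. Lemma~\ref{lem:proj_dev} yields $\nabla h=P_{M}$ in a neighborhood of $\bar{x}$, and since $M$ is $\mathbf{C}^{\infty}$ the projection $P_{M}$ is $\mathbf{C}^{\infty}$-smooth there; hence $h$ itself is $\mathbf{C}^{\infty}$.

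Next I would apply the singular-value analogue of Theorem~\ref{thm:sm_lift} to conclude that $h\circ\sigma$ is $\mathbf{C}^{\infty}$-smooth near $\bar{X}$. Using the identity $\|\sigma(X)\|=\|X\|_{F}$ together with the distance equality noted above, one gets
\begin{align*}
(h\circ\sigma)(X) &= \tfrac{1}{2}\|\sigma(X)\|^{2}-\tfrac{1}{2}d_{M}^{2}(\sigma(X))\\
&= \tfrac{1}{2}\|X\|_{F}^{2}-\tfrac{1}{2}d_{\sigma^{-1}(M)}^{2}(X).
\end{align*}
The singular-value analogue of Theorem~\ref{thm:lift_spec} also gives prox-regularity of $\sigma^{-1}(M)$ at $\bar{X}$, so Lemma~\ref{lem:proj_dev} applied on ${\bf M}^{n\times m}$ yields $\nabla(h\circ\sigma)=P_{\sigma^{-1}(M)}$ near $\bar{X}$. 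Since $h\circ\sigma$ is $\mathbf{C}^{\infty}$, the projection $P_{\sigma^{-1}(M)}$ is $\mathbf{C}^{\infty}$-smooth, and Lemma~\ref{lem:smooth_proj} then forces $\sigma^{-1}(M)$ to be a $\mathbf{C}^{\infty}$ manifold around $\bar{X}$.

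For the converse, I would run the same argument in the opposite direction: starting from the $\mathbf{C}^{\infty}$ manifold $\sigma^{-1}(M)$, form the analogous potential function on ${\bf M}^{n\times m}$, use prox-regularity transfer to deduce prox-regularity of $M$ and the analogous distance identity, and use the smoothness transfer principle in reverse (producing an absolutely permutation-invariant ambient extension by symmetrization, as in Corollary~\ref{cor:rest_smooth}) to conclude that the corresponding function on $\R^{m}$ is $\mathbf{C}^{\infty}$. The main technical obstacle is not the argument itself, which is essentially formal once the transfer principles are in hand, but rather ensuring that the singular-value analogues of the smoothness and prox-regularity transfer theorems are invoked with the correct notion of local invariance, namely invariance under the stabilizer of $\bar{x}$ inside the signed-permutation group. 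Once that point is settled, the proof is a direct translation of the argument of Theorem~\ref{thm:lift_man}.
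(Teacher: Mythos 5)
Your proposal is correct and follows precisely the route the paper intends: the authors state explicitly that the results of Section~\ref{sec:nonsymm} follow by the same arguments as in the symmetric case, and your argument is the faithful translation of the proof of Theorem~\ref{thm:lift_man} with $\lambda$ replaced by $\sigma$, local symmetry by local absolute permutation-invariance, and the spectral transfer theorems by their singular-value analogues. The ingredients you invoke (the potential $h$, Lemmas~\ref{lem:smooth_proj} and \ref{lem:proj_dev}, the distance identity, and the two transfer principles) are exactly the ones used in the symmetric proof, so no further comment is needed.
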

\smallskip

\begin{prop}[The nonsymmetric case: lifts of identifiable sets]
Consider a lsc $f\colon\R^m\to\overline{\R}$ and a matrix $\bar{X}\in {\bf M}^{n\times m}$.
Suppose that $f$ is locally absolutely permutation-invariant around $\bar{x}:=\sigma(\bar{X})$
and consider a subset $M\subset \R^m$ that is locally absolutely permutation-invariant around $\bar{x}$.
Then $M$ is identifiable (relative to $f$) at $\bar{x}$ for $\bar{v}\in\partial f(\bar{x})$, if and only if
$\sigma^{-1}(M)$ is identifiable (relative to $f\circ\sigma$) at $\bar{X}$ for $U^{T}(\Diag \bar{v}) V\in\partial (f\circ \sigma)(\bar{X})$,
where $(U,V)\in {\bf O}^{n,m}$ is any pair satisfying $\bar{X}=U^T(\Diag \sigma(\bar{X}))V$.
\end{prop}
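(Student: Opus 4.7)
The plan is to imitate the proof of Proposition~\ref{prop:lift_id}, replacing eigenvalue decompositions by singular value decompositions throughout. Two nonsymmetric analogues of previously used facts are needed and should either be quoted from the Lewis--Sendov singular-value framework or verified as lemmas beforehand:
\begin{itemize}
\item[(a)] the subdifferential formula
$$\partial(f\circ\sigma)(X)=\{U^{T}(\Diag v)V:\,v\in\partial f(\sigma(X)),\ (U,V)\in{\bf O}^{n,m}_{X}\},$$
where ${\bf O}^{n,m}_{X}:=\{(U,V)\in{\bf O}^{n,m}:X=U^{T}(\Diag\sigma(X))V\}$;
\item[(b)] the singular-value counterpart of Corollary~\ref{cor:conjug2}, namely: if $(U_{1},V_{1}),(U_{2},V_{2})\in{\bf O}^{n,m}_{\bar{X}}$ satisfy $U_{1}^{T}(\Diag v_{1})V_{1}=U_{2}^{T}(\Diag v_{2})V_{2}$, then there is a signed permutation $\tau$ fixing $\sigma(\bar{X})$ with $\tau v_{1}=v_{2}$.
\end{itemize}

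For the forward implication, fix $(U,V)\in{\bf O}^{n,m}_{\bar{X}}$ and set $\bar{V}:=U^{T}(\Diag\bar{v})V$, $F:=f\circ\sigma$. Given a sequence $(X_{i},F(X_{i}),V_{i})\to(\bar{X},F(\bar{X}),\bar{V})$ with $V_{i}\in\partial F(X_{i})$, apply (a) to produce $(U_{i},V_{i}')\in{\bf O}^{n,m}_{X_{i}}$ and $v_{i}\in\partial f(\sigma(X_{i}))$ realizing $V_{i}=U_{i}^{T}(\Diag v_{i})V_{i}'$. By compactness of ${\bf O}^{n,m}$, after passing to a subsequence we may assume $(U_{i},V_{i}')\to(\widetilde{U},\widetilde{V})\in{\bf O}^{n,m}_{\bar{X}}$ and $v_{i}\to\tilde{v}\in\partial f(\bar{x})$. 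Equating limits gives $\widetilde{U}^{T}(\Diag\tilde{v})\widetilde{V}=\bar{V}=U^{T}(\Diag\bar{v})V$, so by (b) there is a signed permutation $\tau\in\fix(\bar{x})$ with $\tau\tilde{v}=\bar{v}$. Local absolute permutation-invariance of $f$ implies that $\tau^{-1}M$ is identifiable at $\bar{x}$ for $\tilde{v}$; applying this along the sequence $(\sigma(X_{i}),f(\sigma(X_{i})),v_{i})\to(\bar{x},f(\bar{x}),\tilde{v})$ yields $\sigma(X_{i})\in\tau^{-1}M$ eventually, and local absolute permutation-invariance of $M$ then gives $\sigma(X_{i})\in M$, i.e.\ $X_{i}\in\sigma^{-1}(M)$.

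For the reverse implication, fix any pair $(U,V)\in{\bf O}^{n,m}_{\bar{X}}$ and set $\bar{V}:=U^{T}(\Diag\bar{v})V$. Given a sequence $(x_{i},f(x_{i}),v_{i})\to(\bar{x},f(\bar{x}),\bar{v})$ with $v_{i}\in\partial f(x_{i})$, choose signed permutations $\tau_{i}\in\fix(\bar{x})$ with $\tau_{i}x_{i}$ lying in the nonnegative decreasing cone; passing to a subsequence we may take $\tau_{i}\equiv\tau$. Letting $A_{\tau^{-1}}$ be the signed permutation matrix and using the same similarity trick as in the proof of Proposition~\ref{prop:lift_id}, define
$$X_{i}:=(U^{T}A_{\tau^{-1}})^{T}U^{T}(\Diag x_{i})V(A_{\tau^{-1}}V),\qquad V_{i}:=(U^{T}A_{\tau^{-1}})^{T}U^{T}(\Diag v_{i})V(A_{\tau^{-1}}V),$$
so that $X_{i}\to\bar{X}$, $V_{i}\to\bar{V}$, $F(X_{i})\to F(\bar{X})$, and $V_{i}\in\partial F(X_{i})$ by (a). Identifiability of $\sigma^{-1}(M)$ at $\bar{X}$ for $\bar{V}$ forces $X_{i}\in\sigma^{-1}(M)$ eventually, i.e.\ $\tau x_{i}=\sigma(X_{i})\in M$; local absolute permutation-invariance of $M$ then returns $x_{i}\in M$.

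The main obstacle is item~(b) above. The SVD carries more degeneracy than the spectral decomposition --- zero singular values introduce independent block freedom in the left and right orthogonal factors on the corresponding null subspaces, and column signs in matched pairs can always be flipped --- so the verification that all such ambiguity is still absorbed by a signed permutation of the diagonal entries is the delicate combinatorial point. Once (a) and (b) are in place, the argument above is essentially a transcription of the proof of Proposition~\ref{prop:lift_id}.
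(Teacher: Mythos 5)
The paper gives no explicit proof of this proposition; Section~\ref{sec:nonsymm} merely asserts that the results of the symmetric case carry over ``with nearly identical proofs.'' Your proposal therefore cannot be compared to a written argument, but it does match the route the authors clearly intend: transcribe the proof of Proposition~\ref{prop:lift_id}, replacing the spectral decomposition by the singular value decomposition and replacing the permutation group by the signed permutation group. You correctly isolate the two ingredients that need to be supplied in the SVD setting: the Lewis--Sendov subdifferential formula (your item~(a)) and the SVD analogue of Corollary~\ref{cor:conjug2} (your item~(b)). Both are available in the Lewis--Sendov singular-value papers, and your instinct that (b) carries the delicate content is sound.

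One concrete slip worth repairing is the explicit formula for $X_i$ and $V_i$ in the reverse implication. In the symmetric proof the ``similarity trick'' works because conjugating a diagonal matrix by a permutation matrix permutes the diagonal: $A_{\sigma}^T(\Diag x)A_{\sigma}=\Diag(\sigma x)$. For a \emph{signed} permutation $\tau$, conjugation by the corresponding generalized permutation matrix $G_\tau$ cancels the signs, since $G_\tau^T(\Diag x)G_\tau$ has diagonal entries $\epsilon_{\pi^{-1}(i)}^2\, x_{\pi^{-1}(i)}=x_{\pi^{-1}(i)}$; a single conjugation can never produce a sign change. So the two orthogonal factors on the left and right must be chosen \emph{differently}, e.g.\ $P_\pi D_\epsilon$ on one side and $P_\pi$ on the other (extended appropriately to the $n\times n$ factor). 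As written, your formula
$(U^{T}A_{\tau^{-1}})^{T}U^{T}(\Diag x_{i})V(A_{\tau^{-1}}V)$
collapses (the $U$'s cancel, $V$ appears twice) and does not produce the intended matrix. The cleanest fix is to bypass the trick entirely and simply set
$X_i:=U^T(\Diag\tau x_i)V$ and $V_i:=U^T(\Diag\tau v_i)V$, where $\tau\in\fix(\bar x)$ is a signed permutation with $\tau x_i\in\R^m_{\geq 0}$ decreasing. Then $\sigma(X_i)=\tau x_i$, $V_i\in\partial F(X_i)$ by (a) and equivariance, $X_i\to\bar X$ since $\tau\bar x=\bar x$, and $V_i\to U^T(\Diag\tau\bar v)V$. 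The last limit equals $\widetilde U^T(\Diag\bar v)\widetilde V$ for $\widetilde U=P_\pi D_\epsilon U$ and $\widetilde V=P_\pi V$ (extended suitably), a pair that does lie in ${\bf O}^{n,m}_{\bar X}$ because $\tau\bar x=\bar x$, so identifiability for that representative is covered by the hypothesis exactly as in the symmetric proof. With this correction the argument is complete and faithful to what the paper implicitly asks for.
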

\smallskip

\begin{thm}[The nonsymmetric case: lifts of partly smooth manifolds]
Consider a lsc function $f\colon\R^m\to\overline{\R}$ and a matrix
$X\in{\bf M}^{n\times m}$. Suppose that $f$ is locally absolutely
permutation-invariant around $\bar{x}:={\sigma(\bar{X})}$. Then $f$
is ${\bf C}^{\infty}$-partly smooth at $\bar{x}$ relative to $M$ if
and only if $f\circ\sigma$ is ${\bf C}^\infty$-partly smooth at
$\bar{X}$ relative to $\sigma^{-1}(M)$.
\end{thm}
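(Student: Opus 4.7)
The plan is to mirror the proof of Theorem~\ref{thm:lift_pman} step by step, with $\lambda$ replaced by $\sigma$, $\mathbf{O}^n$ replaced by $\mathbf{O}^{n,m}$, local symmetry replaced by local absolute permutation-invariance, and each cited result replaced by its nonsymmetric analogue.

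For the forward implication, assume $f$ is $\mathbf{C}^{\infty}$-partly smooth at $\bar{x}$ relative to $M$. First I would establish that $M$ is locally absolutely permutation-invariant near $\bar{x}$: for any signed permutation $\tau$ fixing $\bar{x}$, the function $f$ is also partly smooth at $\bar{x}$ relative to $\tau M$, so local uniqueness of partly smooth manifolds (Theorem~\ref{thm:uniqps}) forces $M=\tau M$ locally, exactly as in Lemma~\ref{lem:ps_man_sym}. The just-stated nonsymmetric lift of $\mathbf{C}^{\infty}$ manifolds then makes $\sigma^{-1}(M)$ a $\mathbf{C}^{\infty}$ manifold around $\bar{X}$, while the nonsymmetric analogues of Corollary~\ref{cor:rest_smooth} and Theorem~\ref{thm:prox_lift} give $\mathbf{C}^{\infty}$-smoothness of $(f\circ\sigma)\big|_{\sigma^{-1}(M)}$ near $\bar{X}$ and prox-regularity of $f\circ\sigma$ at $\bar{X}$, respectively.

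Next I would invoke Proposition~\ref{prop:eqv} on $\R^m$: for every $\bar{v}\in\ri\partial f(\bar{x})$, the set $M$ is identifiable at $\bar{x}$ for $\bar{v}$. The just-stated nonsymmetric lift of identifiable sets then yields that $\sigma^{-1}(M)$ is identifiable (relative to $f\circ\sigma$) at $\bar{X}$ for each matrix $U^{T}(\Diag\bar{v})V$ with $(U,V)\in\mathbf{O}^{n,m}$ diagonalizing $\bar{X}$. To close the argument via Proposition~\ref{prop:eqv} applied on $\mathbf{M}^{n\times m}$, one needs every element of $\ri\partial(f\circ\sigma)(\bar{X})$ to be representable in this form for some $\bar{v}\in\ri\partial f(\bar{x})$, which is the nonsymmetric analogue of Proposition~\ref{prop:ri_sub}. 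Once it is available, the conclusion that $f\circ\sigma$ is $\mathbf{C}^{\infty}$-partly smooth at $\bar{X}$ relative to $\sigma^{-1}(M)$ follows immediately. The converse is handled by running the same chain in the opposite direction.

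The main obstacle is precisely the nonsymmetric analogue of Proposition~\ref{prop:ri_sub}, which in turn rests on a nonsymmetric version of Corollary~\ref{cor:conjug2}. In the singular value setting, two pairs $(U_1,V_1),(U_2,V_2)$ diagonalizing $\bar{X}$ need not differ by a single permutation applied to $\sigma(\bar{X})$: they may also differ by simultaneous sign flips on matched columns of $U$ and $V$, and, when $\bar{X}$ is rank-deficient, by independent orthogonal actions on the left and right null spaces. The key lemma to push through is that any relation $U_1^{T}(\Diag v_1)V_1=U_2^{T}(\Diag v_2)V_2$ with the $U_i,V_i$ diagonalizing $\bar{X}$ forces $v_2=\tau v_1$ for some signed permutation $\tau$ fixing $\sigma(\bar{X})$. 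Once this combinatorial linear-algebraic fact is secured, the remainder of the proof is purely formal.
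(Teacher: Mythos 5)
The paper gives no detailed proof of this theorem: it simply asserts, at the head of Section~\ref{sec:nonsymm}, that ``essentially all of the results presented in the symmetric case have natural analogues in this setting (with nearly identical proofs).'' Your mirror of Theorem~\ref{thm:lift_pman} --- lifting prox-regularity, restricted smoothness, and manifold structure, then reducing partial smoothness to identifiability via Proposition~\ref{prop:eqv} and closing the loop with the singular-value analogue of Proposition~\ref{prop:ri_sub} --- is exactly the intended argument, and you have correctly isolated the one non-routine ingredient, namely the singular-value analogue of the simultaneous-conjugacy Corollary~\ref{cor:conjug2} (that coincidence of $U_i^T(\Diag v_i)V_i$ for two pairs $(U_i,V_i)$ diagonalizing $\bar X$ forces the $v_i$ to agree up to a signed permutation fixing $\sigma(\bar X)$). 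That lemma is indeed where the rank-deficient and sign-flip ambiguities must be dealt with, and it is the nonsymmetric counterpart of Lemma~\ref{lem:sim_conj} (from Lewis--Sendov's singular-value calculus); once it is in hand your chain of implications is complete, so the proposal is correct and takes the same route the authors have in mind.
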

\smallskip

It is unknown whether the analogue of the latter theorem holds in the case of ${\bf C}^p$ partial
smoothness, where $p<\infty$. This is so because it is unknown whether a nonsymmetric analogue of
\cite[Theorem 4.21]{man} holds in case of functions that are differentiable only finitely many times.

Finally, we should note that Section~\ref{sec:dual} also has a natural analogue in the nonsymmetric setting.
For the sake of brevity, we do not record it here.

\section*{Acknowledgments}
The first author thanks Nicolas Hadjisavvas
for useful discussions leading to a simplification of the proof of
Lemma~\ref{nhad}.

\bibliographystyle{plain}
\small
\parsep 0pt
\bibliography{dim_graph}

\end{document}